\newcommand{\ds}{\displaystyle}
\newcommand{\E}{\text{E}}
\newcommand{\X}{\mathsf{X}}
\newcommand{\B}{\mathcal{B}}
\newcommand{\real}{{\mathbb R}}
\newcommand{\N}{{\mathbb N}}
\newcommand\numberthis{\addtocounter{equation}{1}\tag{\theequation}}
\numberwithin{equation}{section}
\theoremstyle{plain}
\newtheorem{thm}{Theorem}
\newtheorem{lemma}{Lemma}
\newtheorem{corollary}{Corollary}
 \theoremstyle{remark}
\newtheorem{rem}{Remark}
\newtheorem{cond}{Condition}
\author{ Dootika Vats\\ School of Statistics\\ University of
  Minnesota\\ \texttt{vatsx007@umn.edu} \and James Flegal
  \thanks{Research supported by the National Science
    Foundation.}\\ Department of Statistics\\ University of
  California, Riverside\\ \texttt{jflegal@ucr.edu} \and Galin Jones
  \thanks{Research supported by the National Institutes of Health and
    the National Science Foundation.}\\ School of
  Statistics\\ University of Minnesota\\ \texttt{galin@umn.edu} }
\title{Strong Consistency of Multivariate Spectral Variance
  Estimators} \date{\today}
\begin{document}

\maketitle

\begin{abstract}
  Markov chain Monte Carlo (MCMC) algorithms are used to estimate
  features of interest of a distribution. The Monte Carlo error in
  estimation has an asymptotic normal distribution whose multivariate
  nature has so far been ignored in the MCMC community. We present a
  class of multivariate spectral variance estimators for the
  asymptotic covariance matrix in the Markov chain central limit
  theorem and provide conditions for strong consistency. We examine
  the finite sample properties of the multivariate spectral variance
  estimators and its eigenvalues in the context of a vector
  autoregressive process of order 1.
\end{abstract}

\section{Introduction} 
\label{sec:introduction}

Markov chain Monte Carlo (MCMC) methods are often required for
parameter estimation in the statistical models encountered in modern
applications. The typical MCMC experiment consists of simulating a
Markov chain in order to estimate a vector of quantities, such as
moments or quantiles, associated with the target
distribution. However, the multivariate nature of the estimation has
only rarely been acknowledged in the MCMC literature. We consider the
situation where estimation of a vector of means is of interest.  Given
a multivariate Markov chain central limit theorem (CLT) for the sample
mean vector, we show that a class of multivariate spectral variance
estimators (MSVEs) are strongly consistent estimators of the
covariance matrix in the asymptotic normal distribution. We also
establish strong consistency of the eigenvalues of any strongly
consistent estimator of the asymptotic covariance matrix.

We know of no other comparable work in the context of MCMC.
\citet{koso:2000} did propose estimators of the asymptotic covariance
matrix which generalized work in the univariate case by
\citet{geye:1992}. However, these estimators are asymptotically
conservative and are based on the properties of reversible Markov
chains, an assumption we do not make.  There has been a substantial
amount of work in the univariate setting.  In particular,
\citet{atch:2011} and \citet{fleg:jone:2010} established strong
consistency of certain univariate spectral variance estimators, but
the multivariate problem is more complicated and requires much new
work.  Moreover, our work represents a substantial generalization of
the univariate results and requires much weaker conditions on the
Markov chain.  Thus we also improve the current results in the
univariate setting.

We will give a more formal description of the problem studied here.
Let $F$ be a probability distribution with support $\X$, equipped with
a countably generated $\sigma$-field $\B(\X)$ and let $g: \X \to
\mathbb{R}^p$ be an $F$-integrable function such that
\[
\theta := \E_{F}g = \int_{\X} g(x) \, dF
\]
is the $p$-dimensional vector of interest. Note that $\X$ and $\theta$
often have different dimensions.  It is common to resort to MCMC
methods to estimate $\theta$ when it is difficult to obtain $\theta$
analytically or to produce independent samples from $F$.
MCMC is popular because it is straightforward
to simulate a Harris ergodic (i.e., aperiodic, $F$-irreducible, and
positive Harris recurrent) Markov chain having invariant distribution
$F$ \citep{geye:2011, liu:2008, robe:case:2013}.  Letting $X = \{X_1,
X_2, X_3, \dots \}$ denote such a Markov chain, estimation is easy
since, for any initial distribution, with probability 1,
 \begin{equation}
 \label{eq:strong.lln}
\theta_n := \dfrac{1}{n} \ds \sum_{t=1}^{n}g(X_t) \to \theta \quad \text{ as } \quad n \to \infty \; .
\end{equation}
Of course, for any $n$ there will be an unknown \textit{Monte Carlo
  error} in estimation, $\theta_n - \theta$, and assessment of this
Monte Carlo error is critical to the reliability of the simulation
results \citep{fleg:hara:jone:2008, fleg:jone:2011, geye:1992,
  jone:hobe:2001}.  However, the multivariate nature of the Monte
Carlo error has been largely ignored in the MCMC literature \citep[but
  see][]{gong:fleg:2015}.

  Instead, the primary focus has been on assessing the univariate
  Monte Carlo error.  Let $g^{(i)}$, $\theta_n^{(i)}$, and
  $\theta^{(i)}$, denote the $i$th components of $g$, $\theta_{n}$,
  and $\theta$, respectively. Then $\theta_n^{(i)} - \theta^{(i)}$ is
  the unknown Monte Carlo error of the $i$th component.  The
  approximate sampling distribution of this error is available via a Markov
  chain CLT if there exists $0<\sigma^2_{i} < \infty$ such that, as $n
  \to \infty$,
\begin{equation}
\label{eq:uni_clt}
	\sqrt{n}(\theta_n^{(i)} - \theta^{(i)}) \stackrel{d}{\to}\text{N}(0, \sigma^2_{i}) \; .
\end{equation}
(See \citet{jone:2004} and \citet{robe:rose:2004} for a discussion of
the conditions for \eqref{eq:uni_clt}.)  Due to serial
correlation in $X$, $\text{Var}_{F} g^{(i)} \neq
\sigma^2_{i}$, except in trivial cases.  Nevertheless, consistent
estimation of $\sigma^2_{i}$ is key to constructing asymptotically
valid confidence intervals for $\theta^{(i)}$ and hence in assessing
the reliability of the simulation results \citep{fleg:gong:2015,
  fleg:hara:jone:2008, glyn:whit:1992, jone:hara:caff:neat:2006,
  jone:hobe:2001}.  Thus consistent estimation of $\sigma^2_{i}$ has
received significant attention; \citet{atch:2011}, \citet{dame:1991},
and \citet{fleg:jone:2010} studied spectral variance estimators,
\citet{hobe:jone:pres:rose:2002} and \citet{mykl:tier:yu:1995}
investigated estimators based on regenerative simulation, and
\citet{jone:hara:caff:neat:2006} studied nonoverlapping batch means.
\citet{geye:1992} introduced asymptotically conservative estimators
based on the spectral properties of reversible Markov chains.
\citet{doss:fleg:jone:neat:2014} considered univariate estimators in
the context of estimating quantiles.

In the multivariate setting, the approximate sampling distribution of
the Monte Carlo error is available via a Markov chain CLT if there
exists a positive definite $p \times p$ matrix $\Sigma$ such that
\begin{equation}
  \label{eq:multi_clt}
	\sqrt{n}(\theta_n - \theta) \overset{d}{\to} \text{N}_p (0, \Sigma) \quad \text{ as } \quad n \to \infty \; .
\end{equation}
We consider a class of MSVEs of $\Sigma$ and provide conditions for
strong consistency.  Our main assumption on the process is the
existence of a multivariate \textit{strong invariance principle}
(SIP); that is, we assume that the centered and appropriately scaled
partial sum process is similar to a Brownian motion. Specifically, an SIP
holds for $\{g(X_{t})\}_{t\geq1}$ if there exists a $p \times p$ lower
triangular matrix $L$ and an increasing function $\psi$ on the
integers such that, with probability 1,
\[ 
 n(\theta_{n} - \theta) = LB(n) + O(\psi(n)) ~~~\text{ as } ~~ n \to \infty~,
\]
where $B(n)$ denotes a $p$-dimensional standard Brownian motion and
$LL^T = \Sigma$. If $\psi$ is such that $\psi(n)/\sqrt{n} \to 0$ as $n
\to \infty$, the SIP implies a strong law, a CLT, and a functional CLT
for $\theta_{n}$. Under moment conditions on $g$, an SIP with $\psi(n)
= n^{1/2 - \lambda}$ for some $ \lambda > 0 $ holds for polynomially
ergodic Markov chains.

There has been a substantial amount of work in the context of MCMC on
establishing that Markov chains are at least polynomially ergodic.  An
incomplete list is given by \citet{acos:hube:jone:2015},
\citet{doss:hobe:2010}, \citet{fort:moul:2003},
\citet{hobe:geye:1998}, \citet{jarn:hans:2000},
\citet{jarn:robe:2002}, \citet{jarn:robe:2007},
\citet{john:geye:2012}, \citet{john:jone:2015},
\citet{jone:robe:rose:2014}, \citet{marc:hobe:2004},
\citet{petr:robe:rose:1999}, \citet{robe:rose:1999a},
\citet{robe:twee:1996}, \citet{rose:1996}, \citet{roy:hobe:2007},
\citet{tan:hobe:2012}, \citet{tan:jone:hobe:2013}, and
\citet{tier:1994}.  While establishing that a Markov chain is at least
polynomially ergodic can be challenging, it is not the obstacle that
it once was.

\subsection{Motivating Example} 
\label{sub:motivating_example}

As motivation for the use of multivariate methods, we present a simple Bayesian logistic regression model. For $i = 1, \dots, K$, let $Y_i$ be a binary response variable. For the $i$th observation let $X_i = (x_{i1}, x_{i2}, \dots, x_{i5})$ be the observed vector of predictors, then
\begin{equation}\label{eq:logistic model}
Y_i | X_i, \beta  \overset{ind}{\sim} \text{Bernoulli} \left( \dfrac{1}{1 + e^{-X_i \beta}}  \right)\, ,~~~\text{ and } ~~~\beta  \sim N_5(0, I_5)\; .
\end{equation}
The resulting posterior $F$ is intractable and hence MCMC is used to obtain estimates of the regression coefficient, $\beta$. We use the \texttt{logit} dataset in the \texttt{mcmc} R package which contains four predictors and 100 observations. The goal is to estimate the posterior mean of $\beta = (\beta_0, \beta_1, \beta_2, \beta_3, \beta_4)^T$. Thus $g$ here is the identity function mapping to $\mathbb{R}^5$.

To sample from the posterior we use the Polya-Gamma Gibbs sampler of \cite{pol:scot:win:2013} (see the \texttt{R} package \texttt{BayesLogit}) which was shown to be uniformly ergodic by \cite{cho:hob:2013}. Although the chain mixes fairly quickly as seen in the  autocorrelation plot for $\beta_0$ in Figure \ref{fig:polya_acf}, the cross-correlation plot between $\beta_0$ and $\beta_2$ indicates correlation across these components that is ignored by univariate methods.  As a result in Figure \ref{fig:polya_conf}, the multivariate confidence ellipse is oriented along non-standard axes (see \cite{vat:fle:jon:2015} for details on how to construct such confidence regions). The ellipse is compared to two univariate confidence boxes; the smaller uncorrected for multiple testing and the larger corrected for two tests using a Bonferroni correction. 

\begin{figure} 
\centering
  \includegraphics[width = 4in]{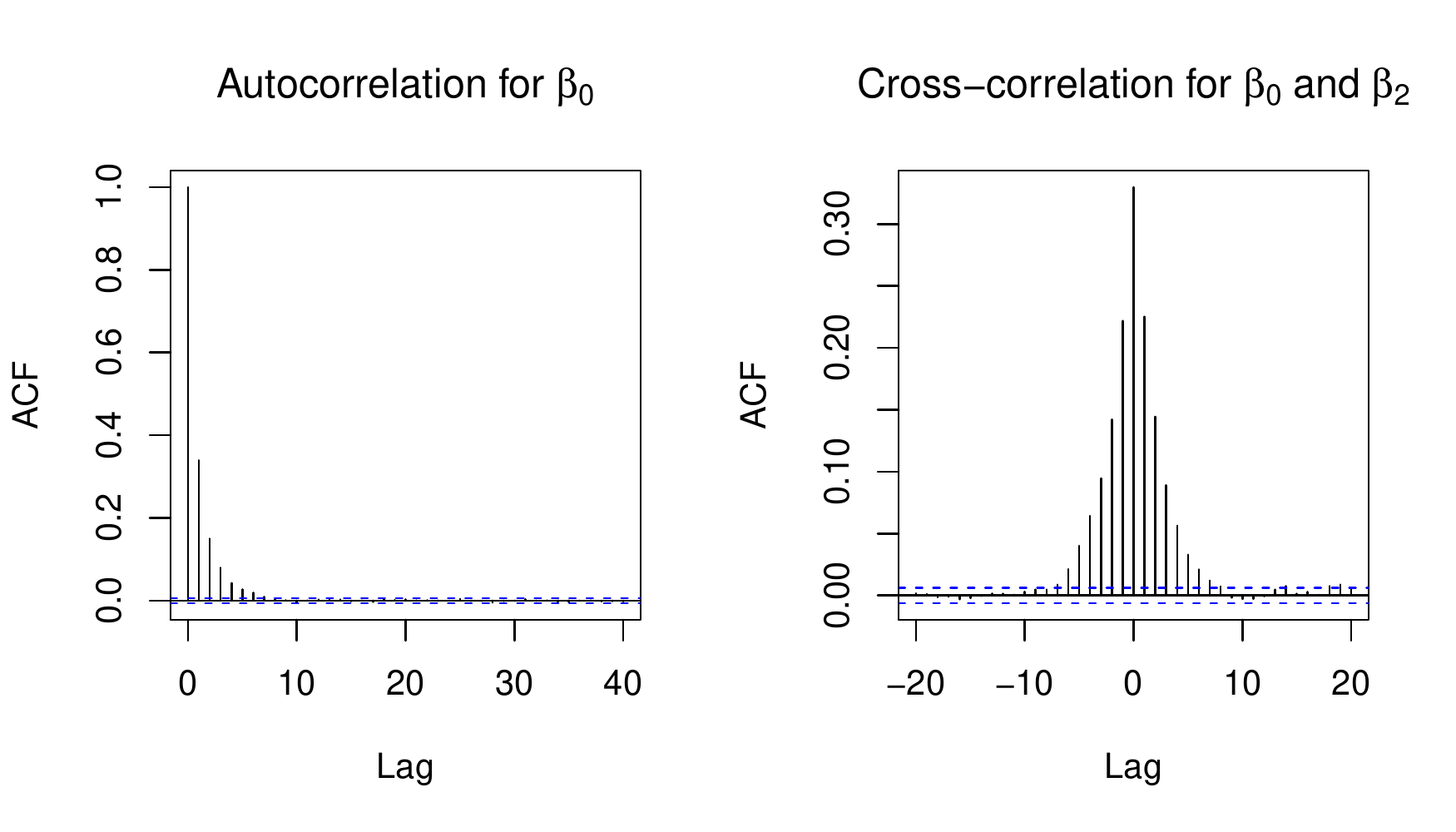}
  \caption[]{Autocorrelation plot for $\beta_0$ and the cross-correlation plot between $\beta_0$ and $\beta_2$ for a Monte Carlo sample size of $n = 10^5$.}
  \label{fig:polya_acf}
\end{figure}

\begin{figure}[tb]
\centering
  \includegraphics[width = 2.5in]{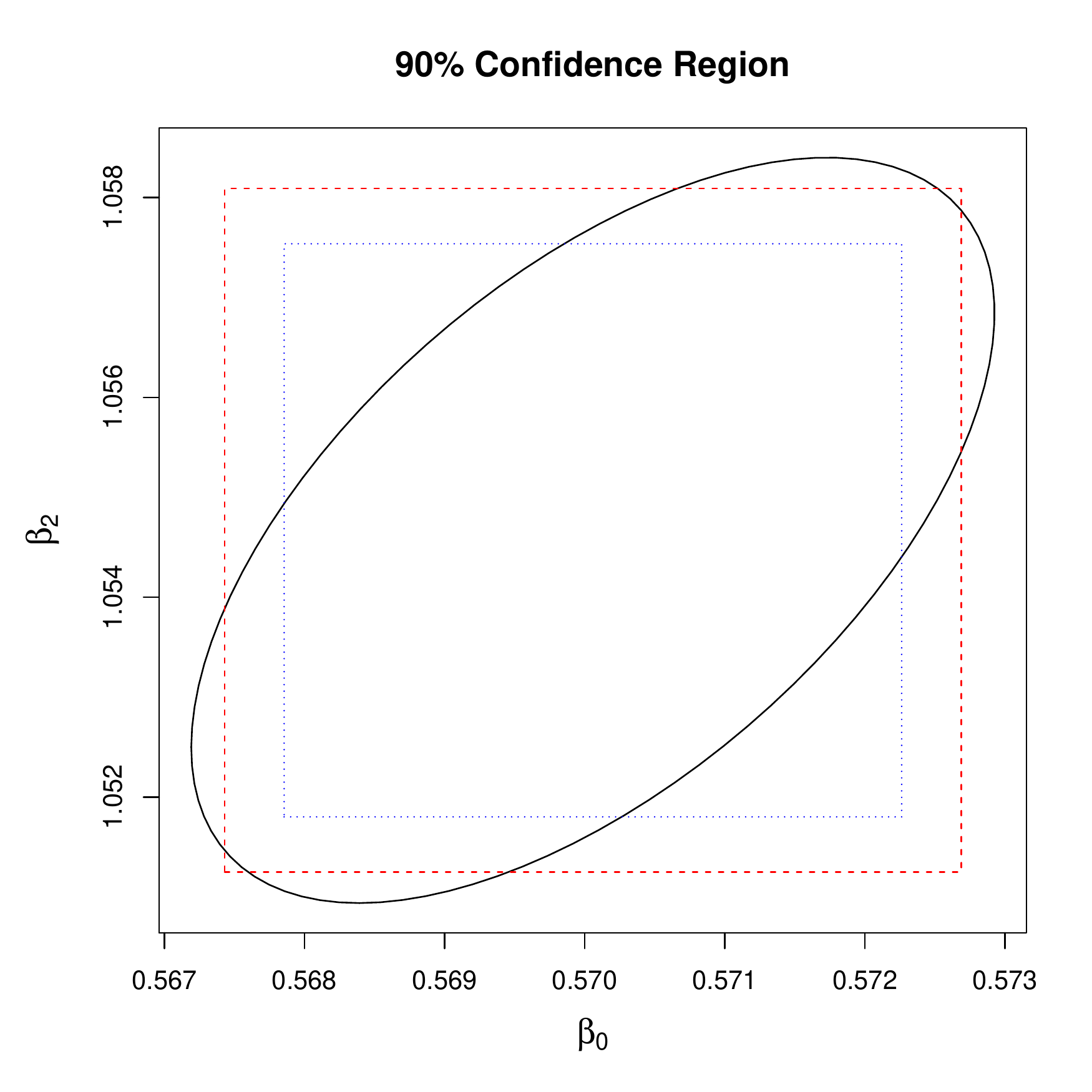}
  \caption{90\% confidence regions constructed using univariate and multivariate methods. The solid ellipse is constructed using an MSVE, the dotted smaller box is constructed using an uncorrected univariate spectral variance estimator and the dashed larger box is constructed using a univariate spectral variance estimator corrected by Bonferroni.}
  \label{fig:polya_conf}
\end{figure}

We assess the performance of these confidence regions by comparing their coverage probabilities and volumes over 1000 independent replications for varying Monte Carlo sample sizes. In particular we look at the volume to the $p$th root ($p$ = 5 in this example). The `true' posterior mean is determined by obtaining a Monte Carlo estimate from a sample of length $10^9$. Results are presented in Table \ref{tab:polya_coverage}. Note that as the Monte Carlo sample size increases, the multivariate methods produce confidence regions with the nominal coverage probability of $90\%$ with significantly lower volume compared to the Bonferroni corrected regions. The uncorrected regions have far from desirable coverage probabilities.

\begin{table}[tb]
  \centering
  \caption{Volume to the $p$th ($p=5$) root and coverage probabilities for 90\% confidence regions constructed using MSVE, uncorrected univariate spectral estimators  and Bonferroni corrected univariate spectral estimators. Replications = 1000 and standard errors are indicated in parenthesis.}
    \label{tab:polya_coverage}
  \begin{tabular}{c|ccc}
  \hline
    $n$ & MSVE & Bonferroni corrected & Uncorrected \\
  \hline

  \multicolumn{4}{c}{Volume to the $5$th root} \\ 
  \hline

  1e3 & 0.0574 \tiny{(4.93e-05)} &  0.0687 \tiny{(7.02e-05)} & 0.0483 \tiny{(4.93e-05)}\\
  1e4 & 0.0189 \tiny{(7.50e-06)} &  0.0226 \tiny{(1.12e-05)} & 0.0160 \tiny{(7.90e-06)}\\
  1e5 & 0.0061 \tiny{(1.10e-06)} &  0.0073 \tiny{(1.50e-06)} & 0.0051 \tiny{(1.10e-06)}\\
  \hline
  \multicolumn{4}{c}{Coverage Probabilities} \\ 
  \hline
  1e3 & 0.853 \tiny{(0.0112)} &  0.871 \tiny{(0.0106)} & 0.549 \tiny{(0.0157)}\\ 
  1e4 & 0.882 \tiny{(0.0102)} &  0.904 \tiny{(0.0093)} & 0.612 \tiny{(0.0154)}\\
  1e5 & 0.895 \tiny{(0.0097)} &  0.910 \tiny{(0.0090)} & 0.602 \tiny{(0.0155)}\\ \hline

  \end{tabular}
\end{table}

One reason for the reduction in volume of the ellipsoid is that  multivariate methods capture information ignored by univariate analysis. This also leads to a better understanding of the effective samples obtained in an MCMC sample. \cite{vat:fle:jon:2015} provide the following estimator of effective sample size 
\[ 
 n \left(\dfrac{|\widehat{\Lambda}|}{|\widehat{\Sigma}|} \right)^{1/p},
\]
where $\widehat{\Lambda}$ is the sample covariance matrix for
$g(X_t)$, $\widehat{\Sigma}$ is a strongly consistent estimator of
$\Sigma$, and $|\cdot|$ denotes determinant. They demonstrate the
superiority of this estimator of effective sample size to the
univariate estimator of \cite{kass:carl:gel:neal:1998} and
\cite{gong:fleg:2015}. 


The rest of the paper is organized as follows. In Section
\ref{sec:multi_sve} we formally define the MSVE and present conditions
for strong consistency. We also establish strong consistency of the
eigenvalues. Section \ref{sec:simulation} contains a simulation study
where we investigate the finite sample properties of the MSVE in the
context of a vector autoregressive process. Finally, we present a
discussion in Section \ref{sec:discussion}. Many technical details of
the proofs from Section \ref{sec:multi_sve} are deferred to the
appendices.

\section{Spectral Estimators and Results} 
\label{sec:multi_sve}

\subsection{Definition of MSVE}
\label{sec:MSVE}

Let $Y_t = g(X_t) - \theta$, $t=1,2,3,\ldots$ and define the lag $s$,
$s \geq 0$, autocovariance matrix as
\[
\gamma(s) = \gamma(-s)^T = \E_F \left[Y_t \, Y_{t+s}^T \right]\, .
\]
Define $I_s$ as $I_s = \{1, \dots, (n-s)\}$ for $s \geq 0$ and as $I_s = \{(1-s), \dots, n\}$ for $s < 0$. Let $\bar{Y}_n = n^{-1} \sum_{t=1}^{n} Y_t$ and define the lag $s$
sample autocovariance  as
\begin{equation}
\label{eq:gamma_n}
\gamma_n(s) = \dfrac{1}{n} \sum_{t \in I_s}(Y_t - \bar{Y}_n)(Y_{t+s} -
\bar{Y}_n)^T\,.
\end{equation}
The MSVE is a weighted and truncated sum of the lag $s$ sample
autocovariances,
\begin{equation}
\label{eq:msve}
	\widehat{\Sigma}_S = \ds \sum_{s = -(b_n-1)}^{b_n-1} w_n(s) \gamma_n(s),
\end{equation}
where $w_n(\cdot)$ is the \textit{lag window} and $b_n$ is the \textit{truncation point}. 

\subsection{Strong Consistency of MSVE}
\label{sec:strong_cons}

\subsubsection{Strong Invariance Principle}
\label{sec:sip}

While Markov chains are our primary interest, we only require
$\{X_t\}_{t\geq 1}$ to be a stochastic process which satisfies a strong
invariance principle or SIP.  In the interest of clarity, the SIP was
stated somewhat loosely in Section~\ref{sec:introduction}. What
follows is a formal statement of our assumption.

Recall that $F$ is a distribution having support $\X$, $g: \X \to
\real^p$, and we are interested in estimating $\theta = \E_F g$.  We
assume $g^2$ (where the square is element-wise) is an $F$-integrable
function.  Set $h(X_t) = \left[ g(X_t) - \theta\right] ^2$, let $\|
\cdot \|$ denote the Euclidean norm, and let $B(t)$ denote a 
$p$-dimensional standard Brownian motion.

We will require an SIP for the partial sums of both $g$ and $h$.  We
assume there exists a $p \times p$ lower triangular matrix $L$, an increasing function
$\psi$ on the integers, a finite random variable $D$, and a
sufficiently rich probability space such that, with probability 1,
\begin{equation}
\label{eq:multi_sip}
\left\| \sum_{t=1}^{n} g(X_{t}) - n\theta - LB(n) 
\right\| < D\, \psi(n) \, .
\end{equation}
We also assume there exists a finite $p$-vector $\theta_h$, a $p \times
p$ lower triangular matrix $L_h$, an increasing function $\psi_h$ on
the integers, a finite random variable $D_h$, and a sufficiently rich
probability space such that, with probability 1,
\begin{equation}
\label{eq:multi_sip_h}
\left\| \sum_{t=1}^{n} h(X_{t}) - n\theta_h - L_hB(n) 
\right\| < D_h\, \psi_h(n) \, .
\end{equation}

\begin{rem} \label{rem:sip} Strong invariance principles have
  attracted much research interest and have been shown to hold for a
  wide variety of processes; see Section~\ref{sec:discussion} for some
  discussion on this point.  Results from \cite{kuel:phil:1980} show
  that for the Markov chains commonly encountered in MCMC settings,
  \eqref{eq:multi_sip} and \eqref{eq:multi_sip_h} hold with $\psi (n)
  = \psi_h (n) = n^{1/2 - \lambda}$ for some $\lambda > 0$.  The
  correlation of the process is measured indirectly by $\psi$
  \citep{phil:stou:1975}; a large serial correlation implies $\lambda$
  is closer to 0 while for less correlated processes $\lambda$ is
  closer to 1/2.
\end{rem}

\subsubsection{Strong Consistency}
\label{sec:sc}

In \eqref{eq:msve} we define the MSVE as the weighted and truncated
sum of the lag $s$ sample autocovariances.  We make the following
assumptions on the lag window $w_n(\cdot)$ and the truncation point
$b_n$.

\begin{cond}
\label{cond:lag.window}
The lag window $w_n(\cdot)$ is an even function defined on $\mathbb{Z}$ such that
\begin{enumerate}[(a)]
  \item $|w_n(s)| \leq 1$ for all $n$ and $s$, 
  \item $w_n(0) = 1$ for all $n$, and 
  \item $w_n(s) = 0$ for all $|s| \geq b_n$.
\end{enumerate}
\end{cond}
\citet{ande:1971} gives a list of lag windows that satisfy
Condition~\ref{cond:lag.window}.  We will consider some of these
further in Section~\ref{sec:lag window}. 

The following Conditions \ref{cond:bn} and \ref{cond:bn_c_logn} are
technical conditions ensuring that $b_n$ grows at the right rate
compared to $n$.
\begin{cond}
\label{cond:bn}
Let $b_n$ be an integer sequence such that $b_n \to \infty$ and $n/b_n \to \infty$ as $n \to \infty$ where $b_n$ and $n/b_n$ are non-decreasing. 
\end{cond}

\begin{cond}
\label{cond:bn_c_logn}
Let $b_n$ be an integer sequence such that
\begin{enumerate}[(a)]
  \item there exists a constant $c \geq 1$ such that $\sum_n (b_n/n)^c < \infty$,
  \item $b_nn^{-1} \log n \to 0$ as $n\to \infty$,
  \item $b_n^{-1} \log n = O(1)$, and
  \item $n > 2b_n$.
\end{enumerate}
\end{cond}

If $b_n = \lfloor n^{\nu} \rfloor$, where $ 0 < \nu < 1$, then Condition
\ref{cond:bn_c_logn} is satisfied if $n > 2^{1/(1-\nu)}$.

Define
\[
\Delta_1 w_n(k)  = w_n(k-1) - w_n(k)
\]
and
\[
\Delta_2 w_n(k)  = w_n(k-1) - 2w_n(k) + w_n(k+1) \; .
\]
\begin{cond}
\label{cond:bn_lag_psi}
Let $b_n$ be an integer sequence, $w_n$ be the lag window, and $\psi(n)$ and $\psi_h(n)$ be positive functions on the integers such that,
\begin{enumerate}[(a)]
  \item   
$b_n n^{-1} \sum_{k=1}^{b_n}k |\Delta_1 w_n(k)| \to 0$ as  $n\to \infty$, \label{cond:bn_lag}
  \item $b_n \psi(n)^2 \log n \left(\ds \sum_{k=1}^{b_n} |\Delta_2 w_n(k)|  \right)^2 \to 0$ as $n\to \infty$, \label{cond:wn_and_psi_1}
  \item $\psi(n)^2 \ds \sum_{k=1}^{b_n} |\Delta_2 w_n(k)| \to 0$ as $n\to \infty$, \label{cond:wn_and_psi_2}
  \item $b_n^{-1} \psi_h(n) \to 0$ as $n\to \infty$, and \label{cond:bn_psih}
  \item $b_n^{-1} \psi(n) \to 0$ as $n \to \infty$.\label{cond:bn_psi}
\end{enumerate}
\end{cond}

Condition \ref{cond:bn_lag_psi}\ref{cond:bn_lag} connects the
truncation point $b_n$ to the lag window $w_n$. In Section
\ref{sec:lag window} we will present examples of lag windows that
satisfy this condition. The functions $\psi(n)$ and $\psi_h(n)$ in
Conditions \ref{cond:bn_lag_psi}\ref{cond:wn_and_psi_1},
\ref{cond:bn_lag_psi}\ref{cond:wn_and_psi_2},
\ref{cond:bn_lag_psi}\ref{cond:bn_psih}, and
\ref{cond:bn_lag_psi}\ref{cond:bn_psi} correspond to the functions
described in \eqref{eq:multi_sip} and \eqref{eq:multi_sip_h} and thus
these four conditions connect the truncation point $b_n$, the lag
window $w_n$, and the correlation of the process, measured indirectly
by $\psi(n)$ and $\psi_h(n)$. In Lemma \ref{lemma:lag.window.bound} we
present sufficient conditions for Conditions
\ref{cond:bn_lag_psi}\ref{cond:bn_lag},
\ref{cond:bn_lag_psi}\ref{cond:wn_and_psi_1}, and
\ref{cond:bn_lag_psi}\ref{cond:wn_and_psi_2}.

\begin{thm}
\label{thm:main}
Suppose the strong invariance principles \eqref{eq:multi_sip} and
\eqref{eq:multi_sip_h} hold. If Conditions \ref{cond:lag.window},
\ref{cond:bn}, \ref{cond:bn_c_logn}, and \ref{cond:bn_lag_psi} hold,
then $\widehat{\Sigma}_S \to \Sigma$, with probability 1, as $n\to
\infty$.
\end{thm}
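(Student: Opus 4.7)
The plan is to reduce the matrix statement to a univariate one. First, $\widehat\Sigma_S$ is symmetric --- because $w_n$ is even and $\gamma_n(s) = \gamma_n(-s)^{\mathsf T}$, transposing and reindexing $s \mapsto -s$ leaves the sum invariant --- so almost-sure convergence of the matrix is equivalent to almost-sure convergence of each of the $p(p+1)/2$ distinct entries. The diagonal entry $\widehat\Sigma_S^{(i,i)}$ is exactly a univariate spectral variance estimator for the scalar process $\{g^{(i)}(X_t)\}$, and projecting \eqref{eq:multi_sip} and \eqref{eq:multi_sip_h} onto the $i$th coordinate furnishes the univariate SIPs (with Brownian variance $\Sigma^{(i,i)}$) that feed the univariate theory. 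For the off-diagonal entry, I would apply a polarization step: expanding sample autocovariances of the scalar process $Z_t := g^{(i)}(X_t) + g^{(j)}(X_t)$ and using the symmetry of $\widehat\Sigma_S$ gives
\[
\widehat\Sigma_S^{(i,j)} \;=\; \tfrac{1}{2}\bigl[\widehat\sigma_Z^{2} - \widehat\Sigma_S^{(i,i)} - \widehat\Sigma_S^{(j,j)}\bigr],
\]
so once $\widehat\sigma_Z^2 \to \sigma_Z^2 = (e_i+e_j)^{\mathsf T}\Sigma(e_i+e_j)$ almost surely, the diagonal pieces subtract off and $\widehat\Sigma_S^{(i,j)} \to \Sigma^{(i,j)}$.

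To establish the resulting univariate claim --- consistency of $\widehat\sigma^2$ for a scalar process satisfying an SIP at rate $\psi$ and an SIP for its centered square at rate $\psi_h$ --- I would follow the Damerdji/Flegal--Jones route in three steps. First, split $\widehat\sigma^2 = \sum_s w_n(s)\tilde\gamma_n(s) + R_n$, where $\tilde\gamma_n(s)$ uses the true mean in place of $\bar Z_n$, and bound the centering correction $R_n = o(1)$ a.s.\ using $|\bar Z_n - E Z_1| = O(\psi(n)/n)$ from the SIP together with Condition~\ref{cond:lag.window} and Condition~\ref{cond:bn_lag_psi}\ref{cond:bn_psi}. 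Second, apply Abel summation to rewrite $\sum_s w_n(s)\tilde\gamma_n(s)$ as a sum over $k = 1,\ldots,b_n$ of $\Delta_1 w_n(k)$ and $\Delta_2 w_n(k)$ acting on squared partial sums $S_m^2$ of the centered process. Third, substitute $S_m = \sigma_Z B(m) + O(\psi(m))$ from the SIP, compute the limit of the resulting Brownian spectral mean deterministically (it equals $\sigma_Z^2$ thanks to $w_n(0)=1$ and the scaling in Conditions~\ref{cond:bn} and \ref{cond:bn_c_logn}), and control the substitution error via Conditions~\ref{cond:bn_lag_psi}\ref{cond:wn_and_psi_1} and \ref{cond:bn_lag_psi}\ref{cond:wn_and_psi_2}. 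The SIP for the centered square, at rate $\psi_h$, enters through Condition~\ref{cond:bn_lag_psi}\ref{cond:bn_psih} to absorb the quadratic-variation term that arises in passing from $S_m^2$ to $[\sigma_Z B(m)]^2$.

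The main obstacle is the third step. After Abel summation the SIP residual of size $\psi(n)$ is effectively squared and then multiplied by $\sum_{k=1}^{b_n}|\Delta_2 w_n(k)|$; forcing this combination to $0$ is exactly the content of Conditions~\ref{cond:bn_lag_psi}\ref{cond:wn_and_psi_1} and \ref{cond:bn_lag_psi}\ref{cond:wn_and_psi_2}, and Lemma~\ref{lemma:lag.window.bound} referenced earlier is the bridge that verifies these conditions for standard lag windows (Bartlett, Tukey--Hanning, and so on). A secondary nuisance is that, strictly speaking, the polarization step requires an SIP for the centered square of $Z$, which couples cross products $Y^{(i)}_t Y^{(j)}_t$ that are not named directly in \eqref{eq:multi_sip_h}; in the Markov-chain regime of Remark~\ref{rem:sip} this is automatic by Kuelbs--Philipp, and one may without loss enlarge the standing assumption to cover the finitely many sum processes needed.
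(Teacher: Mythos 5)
Your proposal is essentially correct, but it takes a genuinely different route from the paper at the top level. The paper works with the matrix estimator directly: it writes $\widehat{\Sigma}_S = \widehat{\Sigma}_{w,n} + d_n$ (Lemma~\ref{lemma:dn}), kills the edge term $d_n$ (Lemma~\ref{lemma:dn.0}), and then proves $\widehat{\Sigma}_{w,n}\to\Sigma$ by comparison with a Brownian analog $\widetilde{\Sigma}_{w,n}$; the genuinely new multivariate work is in the off-diagonal entries of the Brownian spectral estimator, which require Whittle's moment inequality for Gaussian quadratic forms plus Lemma~\ref{lemma:avoidborel} (this is where Condition~\ref{cond:bn_c_logn}(a) is consumed). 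Your polarization step $\widehat\Sigma_S^{(i,j)}=\tfrac12[\widehat\sigma_V^2-\widehat\Sigma_S^{(i,i)}-\widehat\Sigma_S^{(j,j)}]$ for $V_t=g^{(i)}(X_t)+g^{(j)}(X_t)$ is an exact algebraic identity (the sample autocovariances of $V$ expand into the four $(i,j)$-blocks and $\widehat\Sigma_S$ is symmetric), and the projected SIPs do hold with the rates you claim, so the reduction to $p(p+1)/2$ univariate problems is legitimate. What it buys you is that the cross-term Gaussian quadratic-form lemma disappears: every scalar process gets its own one-dimensional Brownian motion and Damerdji's Theorem 4.1 applies verbatim (still using Condition~\ref{cond:bn_c_logn}(a), so the hypotheses are not weakened). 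The trade-off is that your univariate engine --- centering correction, Abel summation to $\Delta_1,\Delta_2$ acting on squared partial sums, SIP substitution --- is exactly the paper's Lemmas \ref{lemma:dn}--\ref{lemma:wn.to.sigma} specialized to $p=1$, so the bulk of the technical work is the same either way.

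The one wrinkle you flag --- that the polarized scalar's squared process couples cross products $Y_t^{(i)}Y_t^{(j)}$ not covered by \eqref{eq:multi_sip_h} --- does not actually require enlarging the hypotheses, and you should not do so, since Theorem~\ref{thm:main} is stated for general processes, not just the Markov chains of Remark~\ref{rem:sip}. In the whole argument the $h$-SIP is used only (via the analog of Lemma~\ref{lemma:bounded.tails}) to show that windowed averages $b_n^{-1}\sum_{k=n-b_n+1}^{n}V_k^2$ and $b_n^{-1}\sum_{k=1}^{b_n}V_k^2$ stay bounded; since $V_k^2\le 2\,h^{(i)}(X_k)+2\,h^{(j)}(X_k)$, this boundedness follows from \eqref{eq:multi_sip_h} componentwise --- the same $|ab|\le(a^2+b^2)/2$ device the paper itself uses in Lemma~\ref{lemma:dtilde.0}. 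With that one line inserted, your proof closes under exactly the stated hypotheses.
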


\begin{proof}[Outline of proof]
The proof is split into several lemmas; see Appendix~\ref{sec:appendixA}
for details.  Define for $l = 0, \dots, (n- b_n)$,
$\bar{Y}_{l}(k) = k^{-1} \sum_{t=1}^{k} Y_{l+t}$ and
\begin{equation*}
\widehat{\Sigma}_{w,n} = \dfrac{1}{n} \ds \sum_{l=0}^{n-b_n} \sum_{k=1}^{b_n} k^2 \Delta_2 w_n(k) [\bar{Y}_l(k) - \bar{Y}_n][\bar{Y}_l(k) - \bar{Y}_n]^T \, . 	
\end{equation*}
 For $t = 1, \dots, n$, define $Z_t = Y_t - \bar{Y}_n$. Then, in
 Lemma~\ref{lemma:dn} we show that $\widehat{\Sigma}_{w,n} =
 \widehat{\Sigma}_S - d_n$, where
\begin{align}
d_n & = \dfrac{1}{n}  \left\{ \sum_{t=1}^{b_n} \Delta_1w_n(t)  \left( \sum_{l=1}^{t-1} Z_lZ_l^T + \sum_{l= n-b_n+t+1}^{n} Z_lZ_l^T  \right) \right. \nonumber \\
&\quad \left. +    \sum_{s=1}^{b_n-1}\left[  \sum_{t=1}^{b_n-s} \Delta_1 w_n(s+t) \left( \sum_{l=1}^{t-1} \left(Z_lZ_{l+s}^T + Z_{l+s} Z_{l}^T \right) + \sum_{l=n-b_n+t+1}^{n-s} \left(Z_lZ_{l+s}^T + Z_{l+s} Z_{l}^T \right)  \right)  \right]\right\} \;. \label{eq:dn1}
\end{align}
Notice that in \eqref{eq:dn1} we use the convention that empty sums
are zero. In Lemma \ref{lemma:dn.0} we show that $d_n \to 0$ as $n \to
\infty$ with probability 1. Thus $\widehat{\Sigma}_{w,n} -
\widehat{\Sigma}_S \to 0$, with probability 1, as $n \to \infty$. In
Lemma~\ref{lemma:wn.to.sigma}, we show that $\widehat{\Sigma}_{w,n}
\to \Sigma$, with probability 1, as $n \to \infty$, and the result
follows.
\end{proof}

We use Theorem~\ref{thm:main} to give conditions for the strong
consistency of $\widehat{\Sigma}_{S}$ when the underlying stochastic
process is a Harris ergodic Markov chain having invariant distribution
$F$, but first we need a couple of definitions.  Recall that $F$ has
support $\X$ and $\mathcal{B}(\X)$ is a countably generated
$\sigma$-field.  For $n \in \N = \{1, 2, 3,\ldots \}$, let the
$n$-step Markov kernel associated with $X$ starting at $x \in \X$ be
$P^{n}(x,dy)$.  Then if $A \in \mathcal{B}(\X)$ and $r\in \{1, 2, 3, 
\ldots \}$, $P^{n}(x,A) = \Pr (X_{r+n} \in A | X_{r} =x) $. Let
$\|\cdot\|_{TV}$ denote the total variation norm.  The Markov chain is
\textit{polynomially ergodic of order $\xi$} where $\xi > 0$ if there
exists $M: \X \to \real^+$ with $E_{F} M < \infty$ such that
\begin{equation} \label{eq:tvn}
\Vert P^n (x, \cdot) - F(\cdot) \Vert_{TV} \leq M(x) n^{-\xi} \; .
\end{equation}
Notice that polynomial ergodicity is weaker than geometric or uniform
ergodicity; see \citet{meyn:twee:2009}.

\begin{rem}\label{rem:polynomial_erg}
Polynomial ergodicity is often proved by establishing the following drift condition. For a function $V:\X \to [1, \infty)$ there exists $d > 0, b < \infty,$ and $0 \leq \tau < 1$ such that for $x \in \X$
\[ 
E[V(X_{n+1}) | X_{n} = x] - V(x) \leq -d [ V(x) ]^{\tau} + bI(x \in C) \, ,
\]
where $C$ is a small set. In order to verify that $E_{F} M < \infty$,
it is sufficient to show that $E_{F} V < \infty$ by Theorem 14.3.7 in
\cite{meyn:twee:2009}.
\end{rem}

\begin{thm}
\label{cor:polymc}
Suppose $\E_F \|g\|^{4 + \delta} < \infty$ for some $\delta > 0$.  Let
$X$ be a polynomially ergodic Markov chain of order $\xi \geq (1 +
\epsilon) (1 + 2/\delta)$ for some $\epsilon > 0$. Then
\eqref{eq:multi_sip} and \eqref{eq:multi_sip_h} hold with 
\[
\psi(n) = \psi_h (n) = n^{1/2 - \lambda}
\]
for some $\lambda > 0$ that depends on $p$, $\epsilon$, and $\delta$.
If Conditions \ref{cond:lag.window}, \ref{cond:bn},
\ref{cond:bn_c_logn}, and \ref{cond:bn_lag_psi} hold, then
$\widehat{\Sigma}_S \to \Sigma$, with probability 1, as $n\to \infty$.
\end{thm}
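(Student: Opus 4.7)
The plan is to reduce the claim to Theorem \ref{thm:main}. Once the two strong invariance principles \eqref{eq:multi_sip} and \eqref{eq:multi_sip_h} are established with a common rate $\psi(n)=\psi_h(n)=n^{1/2-\lambda}$ for some $\lambda>0$, strong consistency $\widehat{\Sigma}_S\to\Sigma$ follows immediately from Theorem \ref{thm:main} together with the assumed Conditions \ref{cond:lag.window}--\ref{cond:bn_lag_psi}. So the real task is to verify both SIPs from the polynomial ergodicity hypothesis and the $(4+\delta)$-moment bound on $g$.

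The first sub-step is moment bookkeeping. Since $\E_F g=\theta$, the triangle inequality gives $\E_F\|g-\theta\|^{4+\delta}<\infty$. For the coordinatewise square $h_i(x)=(g_i(x)-\theta_i)^2$, a standard norm-equivalence and Jensen bound on $\real^p$ produces
\[
\E_F\|h\|^{2+\delta/2} \;\leq\; p^{1+\delta/4}\,\E_F\|g-\theta\|^{4+\delta}\;<\;\infty,
\]
so $\theta_h=\E_F h$ is a well-defined finite vector and the centered process $h(X_t)-\theta_h$ has finite moments of order $2+\delta/2$. Thus $g$ carries moments of order $2+(2+\delta)$, while $h$ carries the weaker moments of order $2+\delta/2$.

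The second sub-step invokes a multivariate SIP for polynomially ergodic Markov chains, which is exactly the machinery pointed to in Remark \ref{rem:sip} and developed in Kuelbs--Philipp (1980). For an $\real^p$-valued functional of a polynomially ergodic chain of order $\xi$ having moments of order $2+\eta$, this theory yields an SIP with remainder $O(n^{1/2-\lambda})$ almost surely, where $\lambda=\lambda(p,\xi,\eta)>0$ whenever a compatibility inequality relating $\xi$, $\eta$, and $p$ holds with strict slack. Applied separately to $g$ (with $\eta=2+\delta$) and $h$ (with $\eta=\delta/2$), the $h$-case is the binding one because it has the smaller number of moments; the compatibility inequality for $h$ is calibrated to become precisely $\xi\geq(1+\epsilon)(1+2/\delta)$, with the strict slack $\epsilon>0$ producing a positive $\lambda_h$. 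Taking $\lambda=\min(\lambda_g,\lambda_h)>0$ and $\psi(n)=\psi_h(n)=n^{1/2-\lambda}$ then yields \eqref{eq:multi_sip} and \eqref{eq:multi_sip_h} with a common rate, and Theorem \ref{thm:main} closes the argument.

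I expect the main obstacle to be bookkeeping in the second sub-step: carefully reading off the SIP exponent $\lambda$ from the Kuelbs--Philipp machinery for the weaker-moment process $h$, and matching it to the sharp hypothesis $\xi\geq(1+\epsilon)(1+2/\delta)$ stated in the theorem while keeping $\lambda$ positive for both $g$ and $h$. Everything else---the moment transfer from $g$ to $h$ and the final appeal to Theorem \ref{thm:main}---is routine.
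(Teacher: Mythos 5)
Your overall route is the same as the paper's: reduce everything to Theorem~\ref{thm:main}, obtain the two SIPs \eqref{eq:multi_sip} and \eqref{eq:multi_sip_h} from the Kuelbs--Philipp strong invariance principle for strongly mixing $\real^p$-valued processes (using that polynomial ergodicity of order $\xi$ bounds the $\alpha$-mixing coefficients by $\E_F M\, n^{-\xi}$), and set $\lambda=\min(\lambda_g,\lambda_h)$. The paper packages this as Theorem~\ref{thm:kuelbs} and Corollary~\ref{cor:poly_sip}; the corollary also records the extension from stationarity to an arbitrary initial distribution via a standard harmonic-function argument (Proposition 17.1.6 of \citealp{meyn:twee:2009}), a point you omit but which is minor.

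The one place your argument does not close as written is the calibration for $h$. You correctly derive only $\E_F\|h\|^{2+\delta/2}<\infty$ from $\E_F\|g\|^{4+\delta}<\infty$ (since $\|h\|\le\|g-\theta\|^2$), but then assert that the Kuelbs--Philipp compatibility condition for $h$ ``becomes precisely $\xi\ge(1+\epsilon)(1+2/\delta)$.'' It does not: Theorem~\ref{thm:kuelbs} with moment exponent $2+\eta$ requires $\alpha(n)=O\bigl(n^{-(1+\epsilon)(1+2/\eta)}\bigr)$, so with $\eta=\delta/2$ for $h$ you would need $\xi\ge(1+\epsilon)(1+4/\delta)$, strictly stronger than the hypothesis. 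The paper instead asserts directly that $\E_F\|g\|^{4+\delta}<\infty$ implies $\E_F\|h\|^{2+\delta}<\infty$, which makes the stated $\xi$ suffice for $h$ with the same $\delta$; note, though, that this implication itself only delivers the $2+\delta/2$ exponent you computed (already for $p=1$ one has $\|h\|^{2+\delta}=|g-\theta|^{4+2\delta}$), so the tension you would need to resolve is present in the paper's own proof as well. As written, your passage from the $h$-moment bound to the $h$-SIP at the stated polynomial order is a genuine gap; closing it requires either strengthening the moment hypothesis on $g$ (to $4+2\delta$ moments) or the ergodicity order (to $(1+\epsilon)(1+4/\delta)$), or justifying the paper's stronger moment claim for $h$.
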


\begin{proof}
See Appendix~\ref{sec:appendixB}.
\end{proof}

\begin{rem} \label{rem:lambda} We rely on results provided by
  \cite{kuel:phil:1980} to establish the existence of
  \eqref{eq:multi_sip} and \eqref{eq:multi_sip_h} in
  Theorem~\ref{cor:polymc}.  However, the precise relationship of
  $\lambda$ with $p$, $\epsilon$, and $\delta$ is not investigated in
  \cite{kuel:phil:1980} and remains an open problem.
\end{rem}

\begin{rem}
  When $p=1$, the MSV estimator reduces to the spectral variance
  estimator (SVE) considered by \citet{atch:2011}, \citet{dame:1991},
  and \citet{fleg:jone:2010}.  However, our result requires weaker
  conditions.  First notice that \citet{fleg:jone:2010} required
  weaker conditions than \citet{dame:1991}.  Thus we only need to
  compare Theorem~\ref{cor:polymc} to the results in \citet{atch:2011}
  and \citet{fleg:jone:2010}, both of whom required the Markov chains
  to be geometrically ergodic and to satisfy a one-step minorization
  condition.  Thus Theorem~\ref{cor:polymc} substantially weakens the
  conditions on the underlying Markov chain, while extending the
  results to the $p \ge 1$ setting.
\end{rem}

\subsubsection{Strong Consistency of Eigenvalues}
\label{sec:eigenvalue}

Having obtained a strongly consistent estimator of $\Sigma$, it is
natural to consider the eigenvalues of the estimator.

\begin{thm}
\label{thm:eigen_con}
Let $\widehat{\Sigma}$ be any strongly consistent estimator of $\Sigma$
and let $\lambda_1 \geq \lambda_2 \geq \dots \geq \lambda_p > 0$ be the eigenvalues of $\Sigma$. Let $\hat{\lambda}_1, \dots, \hat{\lambda}_p$ be the $p$ eigenvalues of $\widehat{\Sigma}$ such that
$\hat{\lambda}_1 \geq \hat{\lambda}_2 \geq \dots \geq \hat{\lambda}_p$, then
$\hat{\lambda}_k \to \lambda_k$, with probability 1, as $n \to \infty$ for all $1 \leq k \leq p$.
\end{thm}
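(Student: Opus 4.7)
The plan is to reduce the claim to a standard matrix-analytic perturbation inequality. The estimators of interest are real symmetric; in particular, the MSVE $\widehat\Sigma_S$ is symmetric because $\gamma_n(-s) = \gamma_n(s)^T$ and the lag window $w_n$ is even. Strong consistency, $\widehat\Sigma \to \Sigma$ with probability 1, is equivalent to entrywise almost sure convergence, which immediately gives
$$\|\widehat\Sigma - \Sigma\|_F \to 0 \quad \text{with probability 1,}$$
since the Frobenius norm is a sum of only $p^2$ squared entries.

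Next I would invoke the Hoffman--Wielandt inequality: for real symmetric $p\times p$ matrices $A$ and $B$ with ordered eigenvalues $a_1 \geq \cdots \geq a_p$ and $b_1 \geq \cdots \geq b_p$,
$$\sum_{k=1}^{p} (a_k - b_k)^2 \leq \|A - B\|_F^2.$$
Applied with $A = \widehat\Sigma$ and $B = \Sigma$, this yields, on the probability-one event on which $\widehat\Sigma \to \Sigma$,
$$\sum_{k=1}^{p} (\hat\lambda_k - \lambda_k)^2 \leq \|\widehat\Sigma - \Sigma\|_F^2 \to 0,$$
so $\hat\lambda_k \to \lambda_k$ almost surely for every $k = 1,\dots,p$, as required. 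An entirely analogous argument goes through with Weyl's inequality $\max_k |\hat\lambda_k - \lambda_k| \leq \|\widehat\Sigma - \Sigma\|_2$ in place of Hoffman--Wielandt.

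There is essentially no obstacle here: the conclusion is a two-line consequence of the Lipschitz continuity of the ordered spectrum of a real symmetric matrix in the Frobenius norm. The only technical point worth flagging is the symmetry of $\widehat\Sigma$; for a non-symmetric strongly consistent estimator one would instead appeal to continuity of the roots of the characteristic polynomial, using the fact that for $n$ sufficiently large the eigenvalues of $\widehat\Sigma$ must lie in arbitrarily small neighborhoods of the real positive eigenvalues of $\Sigma$, and then match them in decreasing real-part order.
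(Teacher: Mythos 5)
Your proposal is correct and takes essentially the same route as the paper, which also deduces the result from $\|\widehat{\Sigma}-\Sigma\|_F \to 0$ together with a spectral perturbation bound (the paper cites Weyl's inequality in the Frobenius norm, you use Hoffman--Wielandt and note Weyl as an alternative). Your explicit remark on the symmetry of $\widehat{\Sigma}$ is a reasonable extra precaution that the paper leaves implicit.
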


\begin{proof}
Let $\| \cdot \|_F$ denote the Frobenius norm. By Weyl's inequality \citep{frank:2012}, for $\epsilon > 0$, if $\| \widehat{\Sigma} - \Sigma \|_F \leq \epsilon$, then  for all $1 \leq k \leq p$, $|\hat{\lambda}_k - \lambda_k| \leq \epsilon,$ which gives the desired result.
\end{proof}

\begin{rem}
\label{rem:eigen}
Theorem~\ref{thm:eigen_con} immediately implies that under the
conditions of either Theorem~\ref{thm:main} or Theorem~\ref{cor:polymc} the
sample eigenvalues of the MSVE are consistent for the population eigenvalues.
That is, $\hat{\lambda}_k \to \lambda_k$, with probability 1, as $n
\to \infty$ for all $1 \leq k \leq p$.
\end{rem}

Sample eigenvalues can play an important role in multivariate
analyses.  For example, the length of any axis of the confidence
region constructed from $\widehat{\Sigma}_{S}$ is determined by the
magnitude of the relevant estimated sample eigenvalue.  Thus the largest
eigenvalue is associated with the axis having the largest estimated
Monte Carlo error.  This also suggests that dimension reduction
methods could be useful in assessing the reliability of the simulation
effort.  Although this is a potentially interesting research direction
it is beyond the scope of this paper.

\subsubsection{Lag Window Conditions}
\label{sec:lag window}

The following generalization
of Lemma 7 in \cite{fleg:jone:2010} is useful for checking that a lag
window satisfies the conditions of Theorem~\ref{thm:main}.

\begin{lemma}
\label{lemma:lag.window.bound}
Reparameterize $w_n$ such that $w_n$ is defined on $[0,1]$ and $w_n(0) = 1$ and $w_n(1) = 0$. Further assume that $w_n$ is twice continuously differentiable and that there exists finite constants $D_1$ and $D_2$ such that $|w_n'(x)| \leq D_1$ and $|w_n''(x)| < D_2$. Then as $n \to \infty$,

\begin{enumerate}
\item Condition \ref{cond:bn_lag_psi}\ref{cond:bn_lag} holds if $b_n^2 n^{-1} \to 0$,

\item Conditions \ref{cond:bn_lag_psi}\ref{cond:wn_and_psi_1} and \ref{cond:bn_lag_psi}\ref{cond:wn_and_psi_2} holds if $b_n^{-1} \psi(n)^2 \log n \to 0$.
 \end{enumerate} 
\end{lemma}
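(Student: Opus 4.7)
The plan is to exploit the reparameterization $w_n : [0,1] \to \real$ together with the $C^2$ smoothness hypothesis to obtain sharp pointwise bounds on the discrete differences $\Delta_1 w_n(k)$ and $\Delta_2 w_n(k)$, and then substitute these into each of the three conditions.

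First I would observe that under the reparameterization, the integer-argument values correspond to $w_n(k/b_n)$, so for $1 \leq k \leq b_n$,
\[
\Delta_1 w_n(k) \;=\; w_n\!\left(\tfrac{k-1}{b_n}\right) - w_n\!\left(\tfrac{k}{b_n}\right),
\]
and similarly $\Delta_2 w_n(k)$ is a symmetric second difference at spacing $1/b_n$. The mean value theorem together with $|w_n'| \leq D_1$ yields $|\Delta_1 w_n(k)| \leq D_1/b_n$, while a second-order Taylor expansion around $k/b_n$ (or two applications of the MVT) together with $|w_n''| \leq D_2$ yields $|\Delta_2 w_n(k)| \leq D_2/b_n^2$. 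In particular, summing gives $\sum_{k=1}^{b_n}|\Delta_2 w_n(k)| \leq D_2/b_n$.

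With these bounds in hand, the three verifications reduce to routine estimates. For part~1,
\[
\frac{b_n}{n}\sum_{k=1}^{b_n} k\,|\Delta_1 w_n(k)| \;\leq\; \frac{b_n}{n}\cdot\frac{D_1}{b_n}\cdot\frac{b_n(b_n+1)}{2} \;=\; O\!\left(\frac{b_n^2}{n}\right),
\]
which tends to $0$ by hypothesis. For part~2, Condition~\ref{cond:bn_lag_psi}\ref{cond:wn_and_psi_1} is bounded above by $b_n\psi(n)^2\log n\cdot (D_2/b_n)^2 = D_2^{2}\,\psi(n)^2\log n\,/\,b_n$, and Condition~\ref{cond:bn_lag_psi}\ref{cond:wn_and_psi_2} is bounded above by $\psi(n)^2\cdot D_2/b_n$; both vanish under the hypothesis $b_n^{-1}\psi(n)^2\log n \to 0$ (the second using that $\log n \geq 1$ eventually).

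There is essentially no obstacle beyond the bookkeeping above. The one point worth flagging is that the second symmetric difference of a $C^2$ function at spacing $1/b_n$ decays like $1/b_n^2$ rather than the naive $1/b_n$; this extra factor is precisely what the uniform bound $|w_n''| \leq D_2$ is designed to provide and is what makes the $\log n$-rate assumption on $b_n^{-1}\psi(n)^2$ sufficient rather than needing a stronger rate.
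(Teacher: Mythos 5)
Your overall strategy is exactly the one the paper defers to (it cites Lemma 7 of Flegal and Jones, 2010, whose argument is the mean-value/Taylor bound you use), and parts of it are fine: the bound $|\Delta_1 w_n(k)| \leq D_1/b_n$ is valid for every $1 \leq k \leq b_n$ because both arguments $(k-1)/b_n$ and $k/b_n$ lie in $[0,1]$, so part~1 and the $O(b_n^2/n)$ computation are correct. However, there is a genuine gap in the key pointwise claim $|\Delta_2 w_n(k)| \leq D_2/b_n^2$: it fails at $k = b_n$. By Condition~\ref{cond:lag.window}(c) the original lag window satisfies $w_n(b_n) = w_n(b_n+1) = 0$, so
\[
\Delta_2 w_n(b_n) = w_n(b_n-1) - 2w_n(b_n) + w_n(b_n+1) = w_n\!\left(\tfrac{b_n-1}{b_n}\right),
\]
and the point $(b_n+1)/b_n$ lies outside $[0,1]$, so the second-order Taylor argument does not apply there; the zero extension beyond $1$ need not be $C^2$ (or even $C^1$) at $1$. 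Concretely, for the modified Bartlett window $w_n(x) = 1-x$ one gets $\Delta_2 w_n(b_n) = 1/b_n$, not $O(b_n^{-2})$, so the claimed pointwise bound is false for one of the paper's principal examples.

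The gap is reparable and does not change the conclusion: for $1 \leq k \leq b_n - 1$ all three arguments lie in $[0,1]$ and the Taylor bound $|\Delta_2 w_n(k)| \leq D_2/b_n^2$ is correct, while the boundary term is controlled by the first-derivative bound, $|\Delta_2 w_n(b_n)| = |w_n((b_n-1)/b_n) - w_n(1)| \leq D_1/b_n$. Hence
\[
\sum_{k=1}^{b_n} |\Delta_2 w_n(k)| \leq \frac{(b_n-1)D_2}{b_n^2} + \frac{D_1}{b_n} \leq \frac{D_1 + D_2}{b_n},
\]
which is the same order as your bound with a different constant, and your verifications of Conditions~\ref{cond:bn_lag_psi}\ref{cond:wn_and_psi_1} and \ref{cond:bn_lag_psi}\ref{cond:wn_and_psi_2} then go through verbatim (including the observation that $\log n \geq 1$ eventually for Condition~\ref{cond:bn_lag_psi}\ref{cond:wn_and_psi_2}). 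You should state the boundary case explicitly, since the behavior of $\Delta_2 w_n$ at $k = b_n$ is precisely what distinguishes windows that satisfy Condition~\ref{cond:bn_lag_psi}\ref{cond:wn_and_psi_2} from those (simple truncation, scale-parameter Bartlett) that do not.
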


\begin{proof}
The argument is the same as that of Lemma 7 in \cite{fleg:jone:2010}
and hence is omitted.
\end{proof}

\begin{rem}
  It is common to use $b_n = \lfloor n^{\nu} \rfloor$ in which case Conditions
  \ref{cond:bn_lag_psi}\ref{cond:bn_lag},
  \ref{cond:bn_lag_psi}\ref{cond:wn_and_psi_1}, and
  \ref{cond:bn_lag_psi}\ref{cond:wn_and_psi_2} hold, if we choose
  $0 < \nu < 1/2$ such that $n^{-\nu} \psi(n)^2 \log n \to 0$ as $n
  \to \infty$.
\end{rem}

\begin{rem}
We now consider some examples of lag windows which satisfy
Condition~\ref{cond:lag.window} and consider whether Conditions  \ref{cond:bn_lag_psi}\ref{cond:bn_lag}, \ref{cond:bn_lag_psi}\ref{cond:wn_and_psi_1}, and \ref{cond:bn_lag_psi}\ref{cond:wn_and_psi_2}  hold.

\begin{enumerate}
\item \textit{Simple Truncation}: $w_n(k) = I(|k| < b_n)$. Using this
  window the estimator obtained is truncated at $b_n$ but weighted
  identically. In this case, $\Delta_2 w_n(k) = 0$ for $k =
  1, \dots, b_n-2$, $\Delta_2 w_n(b_n-1) = -1$ and $\Delta_2 w_n(b_n)
  = 1$. It is easy to see that Condition \ref{cond:bn_lag_psi}\ref{cond:wn_and_psi_2}  is not satisfied.\\

\item \textit{Blackman-Tukey}: $w_n(k) = \left[1 - 2a + 2a \cos \left( \pi
      |k|/{b_n} \right)\right]I(|k| < b_n)$ where $a > 0$. This is a
  generalization for the \textit{Tukey-Hanning} window where $a =
  1/4$. For fixed $a$, the Blackman-Tukey window satisfies the
  conditions of Lemma \ref{lemma:lag.window.bound}, thus Conditions  \ref{cond:bn_lag_psi}\ref{cond:bn_lag}, \ref{cond:bn_lag_psi}\ref{cond:wn_and_psi_1}, and \ref{cond:bn_lag_psi}\ref{cond:wn_and_psi_2} hold if $b_n^2
  n^{-1} \to 0$ and $b_n^{-1} \psi(n)^2 \log n \to 0$ as $n \to
  \infty$.\\

\item \textit{Parzen}: $w_n(k) = \left[1 - |k|^q/{b_n^q} \right] I(|k| <
  b_n)$ for $q \in \mathbb{Z}^+$. When $q = 1$ this is the
  \textit{modified Bartlett} window. It is easy to show that the
  Parzen window satisfies the conditions for Lemma
  \ref{lemma:lag.window.bound}, and thus Conditions  \ref{cond:bn_lag_psi}\ref{cond:bn_lag}, \ref{cond:bn_lag_psi}\ref{cond:wn_and_psi_1}, and \ref{cond:bn_lag_psi}\ref{cond:wn_and_psi_2} hold if $b_n^2 n^{-1} \to
  0$ and $b_n^{-1} \psi(n)^2 \log n \to 0$ as $n \to \infty$.\\

\item \textit{Scale-parameter modified Bartlett}: $w_n(k) = \left[1 -
    \eta |k|/{b_n} \right] I(|k| < b_n)$ where $\eta$ is a positive
  constant not equal to 1. Then $\Delta_1 w_n(k) = \eta b_n^{-1}$ for
  $k = 1, 2, \dots, b_n-1$ and $\Delta_1 w_n(b_n) = 1 - \eta + \eta
  b_n^{-1}$ so that Condition \ref{cond:bn_lag_psi}\ref{cond:bn_lag}
  is satisfied when $b_n^2 n^{-1} \to 0$ as $n \to \infty$. Also,
  $\Delta_2 w_n(k) = 0$ for $k = 1, 2, \dots, b_n-2$, $\Delta_2
  w_n(b_n-1) = \eta - 1$ and $\Delta_2 w_n(b_n) = 1 - \eta + \eta
  b_n^{-1}$. We conclude that $\sum_{k=1}^{b_n} |\Delta_2 w_n(k)|$
  does not converge to 0 and hence Condition
  \ref{cond:bn_lag_psi}\ref{cond:wn_and_psi_2} is not satisfied.
\end{enumerate}
\end{rem}

Figure \ref{fig:windows} provides a graph of the three lag windows we
consider in the next section, specifically, the modified Bartlett,
Tukey-Hanning, and scale-parameter modified Bartlett windows. It is
evident that the modified Bartlett and Tukey-Hanning windows are
similar and the scale-parameter modified Bartlett window weighs the
lags more severely.
\begin{figure}
\centering
		\includegraphics[scale = .4]{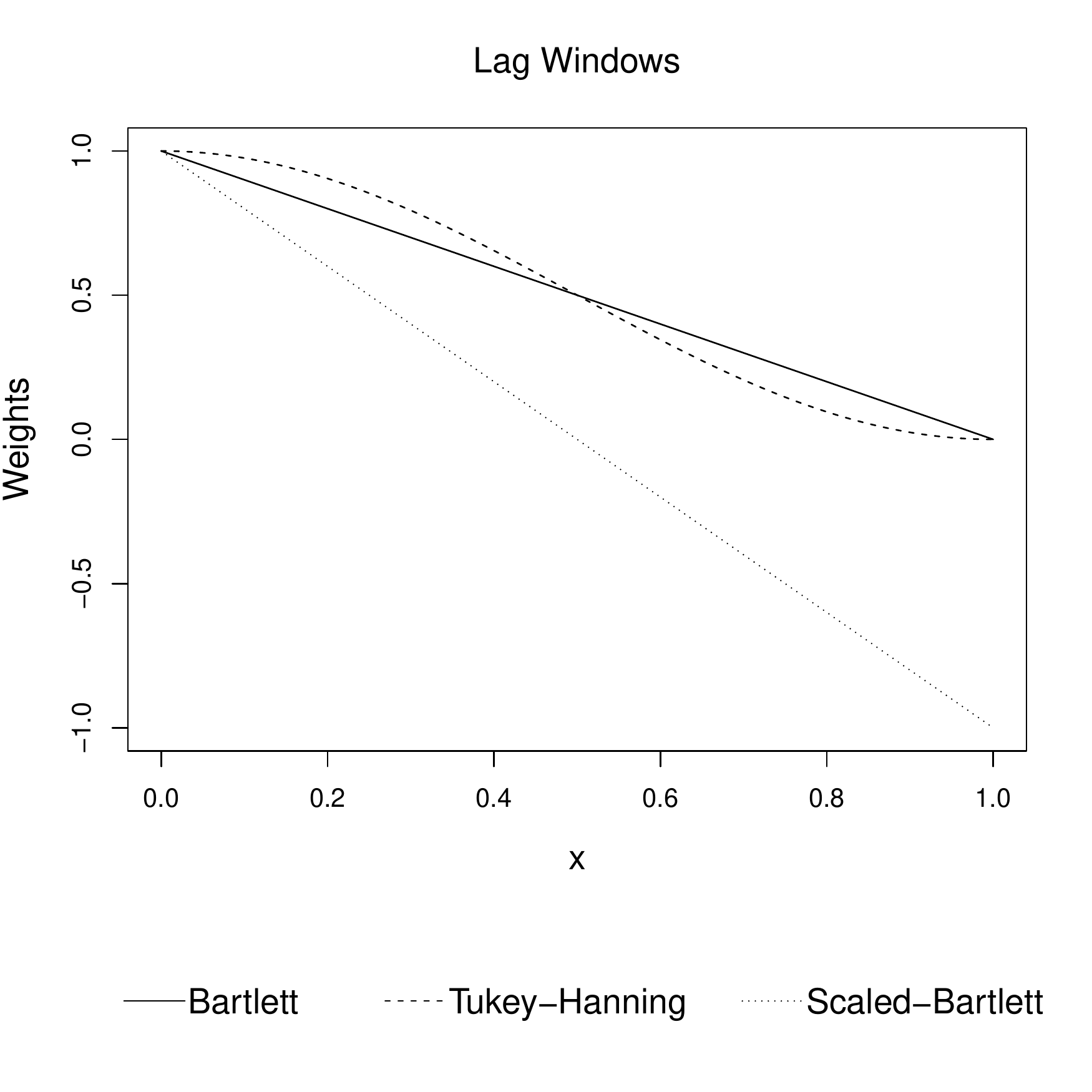}
                \caption{Plot of three lag windows, modified Bartlett
                  (Bartlett), Tukey-Hanning and the scale-parameter
                  Bartlett with scale parameter 2 (Scaled-Bartlett).}
	\label{fig:windows}
\end{figure}

\section{Simulation} 
\label{sec:simulation}

We consider some finite sample properties of the MSVE in the context of a
vector autoregressive process of order 1 or VAR(1).  Let
\begin{equation}
\label{eq:var}
	y_t = \Phi y_{t-1} + \epsilon_t,
\end{equation}
where $y_t \in \mathbb{R}^p$ for all $t$, $\Phi$ is a $p \times p$
matrix, $\epsilon_t \overset{iid}{\sim} N_p(0, W)$, and $y_0$ is the
zero vector. While this is a simple model, it is useful to study since
we can control the correlation of the process.

We assume that the largest eigenvalue of $\Phi$,
$\phi_{\max}$, is less than 1 in absolute value, in which case the stationary
distribution for the process is $ F = N_p(0, V)$ where $vec(V) =
(I_{p^2} - \Phi \otimes \Phi)^{-1} vec(W)$. Here $\otimes$
denotes Kronecker product and $I_{p^2}$ is the $p^2 \times p^2$ identity matrix. With some algebra it can be shown that
the lag $s$ autocovariance matrix for $s > 0$ is
\[ \gamma (s) = \Phi^s V  \, \, \, \, \text{and} \, \, \, \,  \gamma(-s) = V (\Phi^T)^s.\]

Consider estimating $\E_Fy$ with $\bar{y}_n$, the Monte Carlo
estimate. \cite{tjos:1990} showed that the process is geometrically
ergodic as long as $|\phi_{\max}| < 1$. In fact, the smaller the
largest eigenvalue, the faster the process mixes. Since $F$ has a
moment generating function, a CLT holds with
\begin{align*}
	\Sigma & =  \ds \sum_{s = -\infty}^{\infty} \gamma(s)\\
	& = \ds \sum_{s = 0}^{\infty} \gamma(s) + \ds \sum_{s = -\infty}^{0} \gamma(s) - V\\
	& = \ds \sum_{s = 0}^{\infty} \Phi^s V + \ds \sum_{s = -\infty}^{0} V (\Phi^T)^s - V\\
	& = (1 - \Phi)^{-1} V + V \left(1 - \Phi^T \right)^{-1} - V. \numberthis \label{eq:true_sig}
\end{align*}

For this process, we investigate the performance of the class of MSVE
in estimating $\Sigma$. We set $W$ to be the first order
autoregressive covariance matrix with correlation $\rho = 0.5$ and
present simulation results for different settings of $\Phi$ and $p$. These settings are presented in
Table \ref{tab:settings}. For Settings 1 and 4, $\phi_{\max} = .2$,
Settings 2 and 5, $\phi_{\max} = .6$ and Settings 3 and 6,
$\phi_{\max} = .9$. Thus, these three pairs of settings yield
processes with different mixing rates.

\begin{table}[!h]
    \caption{\footnotesize Simulation settings 1 through 6.}
  \label{tab:settings}
	\begin{center}
		\begin{tabular}{ccc}
		\hline

		\hline
		Setting & $p$ & Eigenvalues of $\Phi$ for $i = 0, \dots, p-1$\\
		\hline
		1 &	10 & $\lambda_i = .01 + i(.20 - .01)/(p-1)$ \\
		2 &	10 & $\lambda_i = .40 + i(.60 - .40)/(p-1)$ \\
		3 &	10 & $\lambda_i = .70 + i(.90 - .70)/(p-1)$ \\
		4 &	50 & $\lambda_i = .01 + i(.20 - .01)/(p-1)$ \\
		5 &	50 & $\lambda_i = .40 + i(.60 - .40)/(p-1)$ \\
		6 &	50 & $\lambda_i = .70 + i(.90 - .70)/(p-1)$ \\
		\hline

		\hline
		\end{tabular}
	\end{center}
\end{table}

We compare the performance of three lag windows: modified Bartlett,
Tukey-Hanning, and scale-parameter modified Bartlett with scale $= 2$.
In Section \ref{sec:multi_sve} we showed that the modified Bartlett
and the Tukey-Hanning windows satisfy the conditions of Theorem
\ref{thm:main} while the scale-parameter modified Bartlett does not.

For each setting, we do the following in each of 100 independent
replications. We observe the process for a Monte Carlo sample size of
$1e5$, and calculate the three MSVEs at samples $\{1e3, 5e3,
1e4, 5e4, 1e5\}$ with $b_n = \lfloor n^{1/3} \rfloor$. The error in
estimation is determined by calculating the average relative difference in
Frobenius norm, i.e. $||\widehat{\Sigma} - \Sigma||_F/||\Sigma||_F$
for each of the three windows at all five Monte Carlo sample sizes.

In Figure \ref{fig:p10_50_conv}, we plot the results for all settings
for all three lag windows. For Settings 1 and 4, all three lag windows
perform equally well while for Settings 3 and 6, the scale parameter
modified Bartlett window performs poorly. In all settings, the
modified Bartlett and the Tukey-Hanning windows perform similarly, but
the Tukey-Hanning window is slightly better when the chain mixes more
slowly. The plots also indicate that as $\phi_{\max}$ increases, a
larger Monte Carlo sample size is required for a desired error in
estimation threshold. This is as expected since we know for higher
values of $\phi_{\max}$, the process mixes more slowly.
\begin{figure}
	\begin{center}
		\includegraphics[width = 5in]{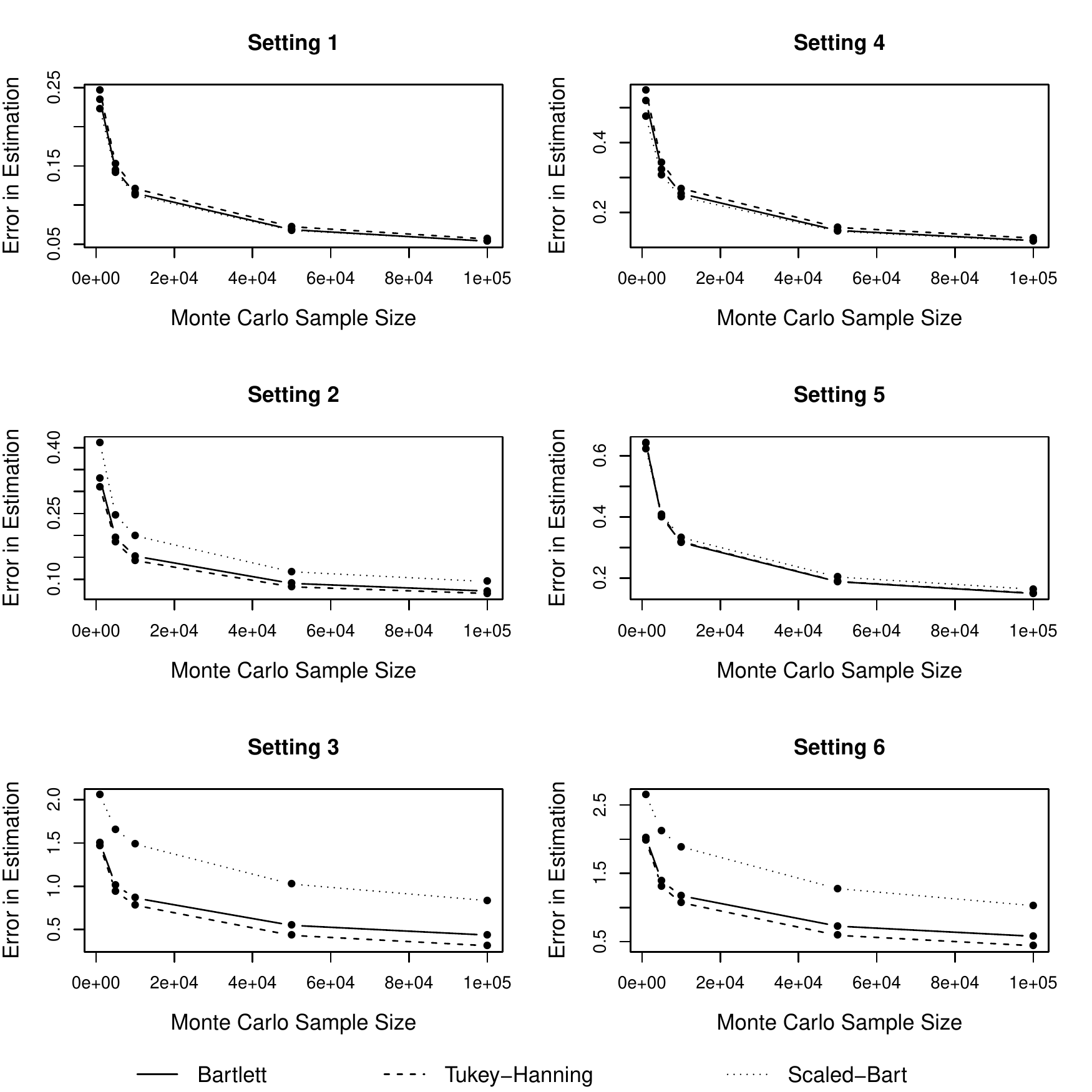}
	\end{center}
	\caption{$\|\widehat{\Sigma}_S - \Sigma\|_F/ \|\Sigma\|_F$ for the three lag windows at different Monte Carlo sample sizes for all six settings averaged over 100 iterations.}
	\label{fig:p10_50_conv}
\end{figure}

In Section \ref{sec:multi_sve} we presented the proof for the
convergence of the eigenvalues of the MSVE in Remark~\ref{rem:eigen}.  To study the finite
sample properties of the maximum eigenvalue we observe its behavior
for the three different lag windows at different Monte Carlo sample
sizes over each of 100 independent replications. At each replication,
we observe the relative error in estimation, $|\hat{\lambda}_1 -
\lambda_1|/\lambda_1$. The results are presented in Figure
\ref{fig:eig_p10_50} and are similar to what was observed for the
convergence of the MSVEs. For Settings 2, 3, 5 and 6, the
scale-parameter modified Bartlett window performs significantly worse
than the Tukey-Hanning and the modified Bartlett windows.  When
the chain mixes more slowly, the Tukey-Hanning window appears to give
slightly better results.

\begin{figure}
	\begin{center}
		\includegraphics[width = 5in]{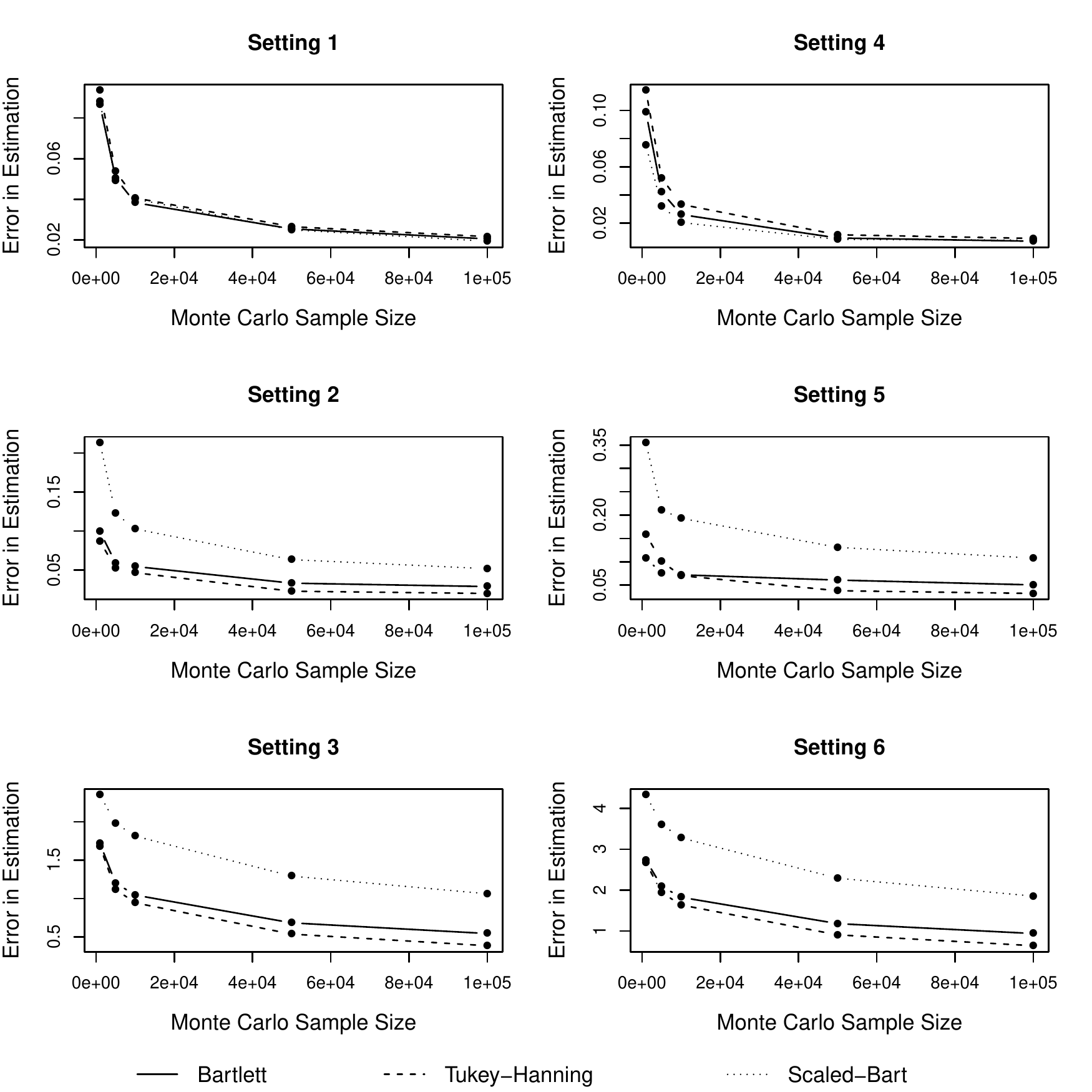}
	\end{center}
	\caption{$|\hat{\lambda}_1 - \lambda_1|/\lambda_1$ for the three lag windows at different Monte Carlo sample sizes for all six settings averaged over 100 iterations.}
	\label{fig:eig_p10_50}
\end{figure}

It is natural to investigate the stability of estimation of the
largest eigenvalue.  We study this empirically for Setting 1 by
observing the shape of the distribution of the maximum eigenvalue for
the estimates of $\Sigma$ obtained through the three lag windows at
varying Monte Carlo sample sizes over the 100 independent
replications. Using \eqref{eq:true_sig}, the true maximum eigenvalue
for this setting is 2.683. In Figure \ref{fig:eig_clt}, we notice that
as the Monte Carlo sample size increases, the shape of the density of
the largest eigenvalue is increasingly symmetric and centered at this
true value. In addition, as the Monte Carlo sample size increases, the
variance of the largest estimated eigenvalue decreases. This is observed for all
three lag windows.


\begin{figure}%
\centering
\subfloat[$n = 10^3$]{%
\label{fig:eig_p2_clt1}%
\includegraphics[width=.34\linewidth]{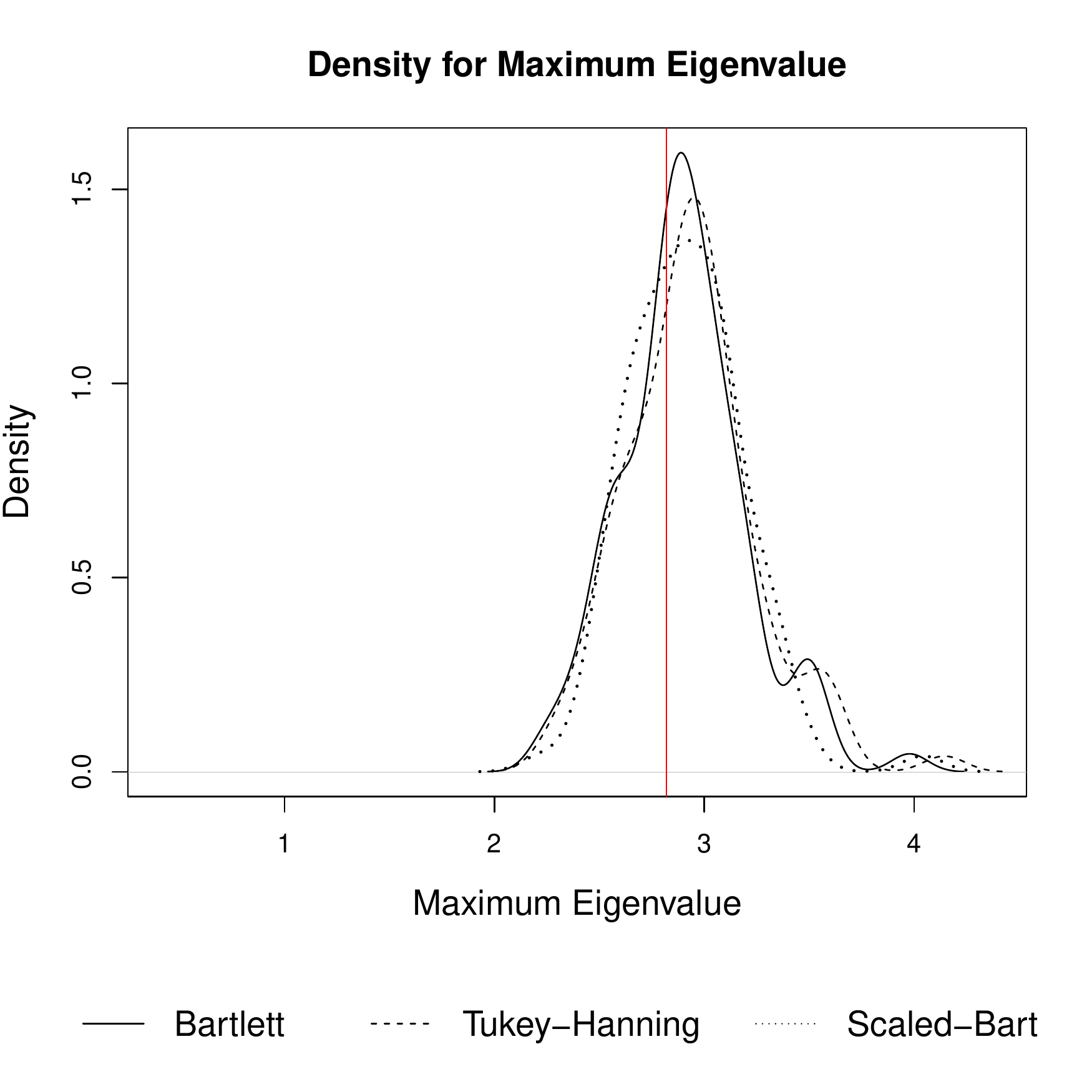}}%
\subfloat[$n = 10^4$]{%
\label{fig:eig_p2_clt2}%
\includegraphics[width=.34\linewidth]{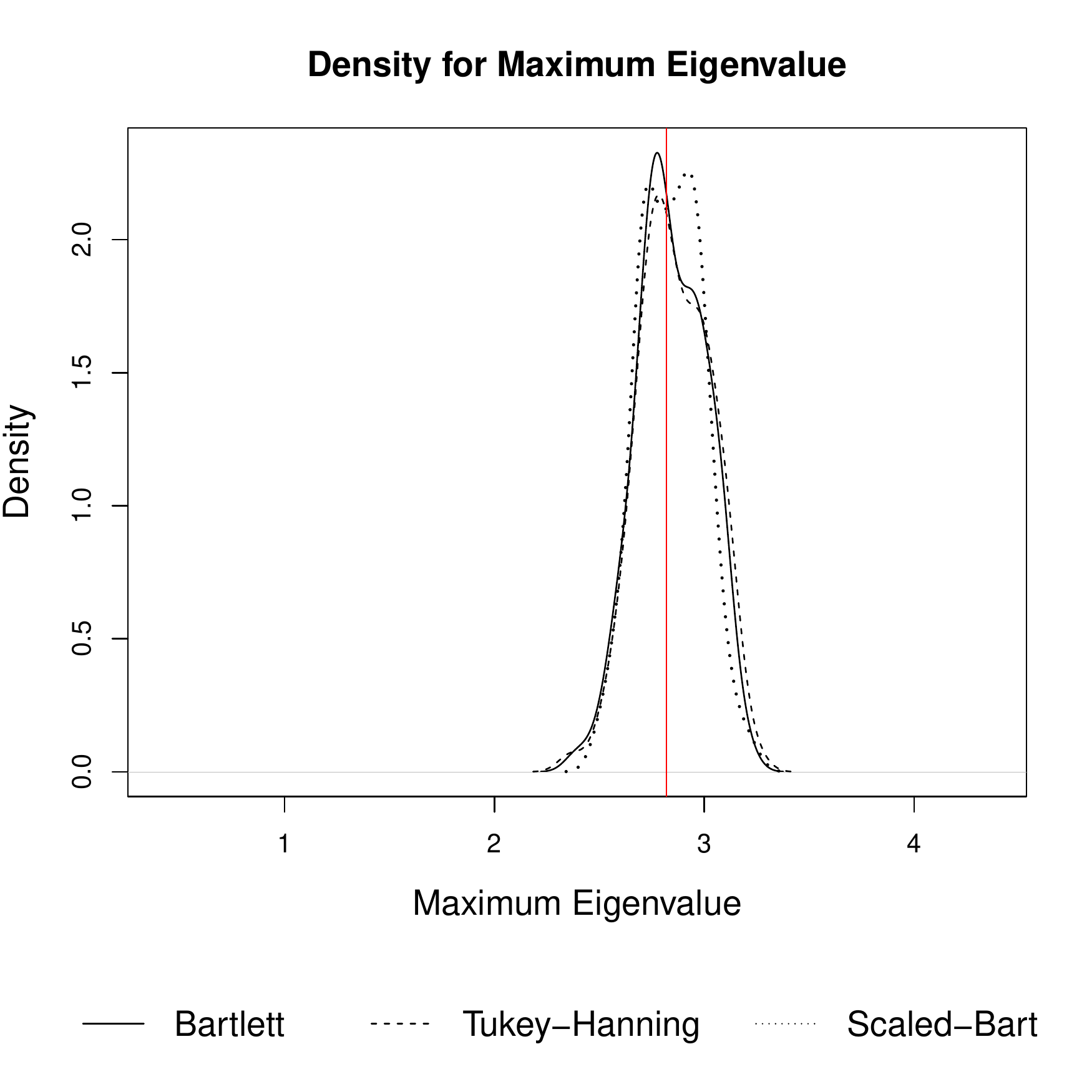}} 
\subfloat[$n = 10^5$]{%
 \label{fig:eig_p2_clt3}%
 \includegraphics[width=.34\linewidth]{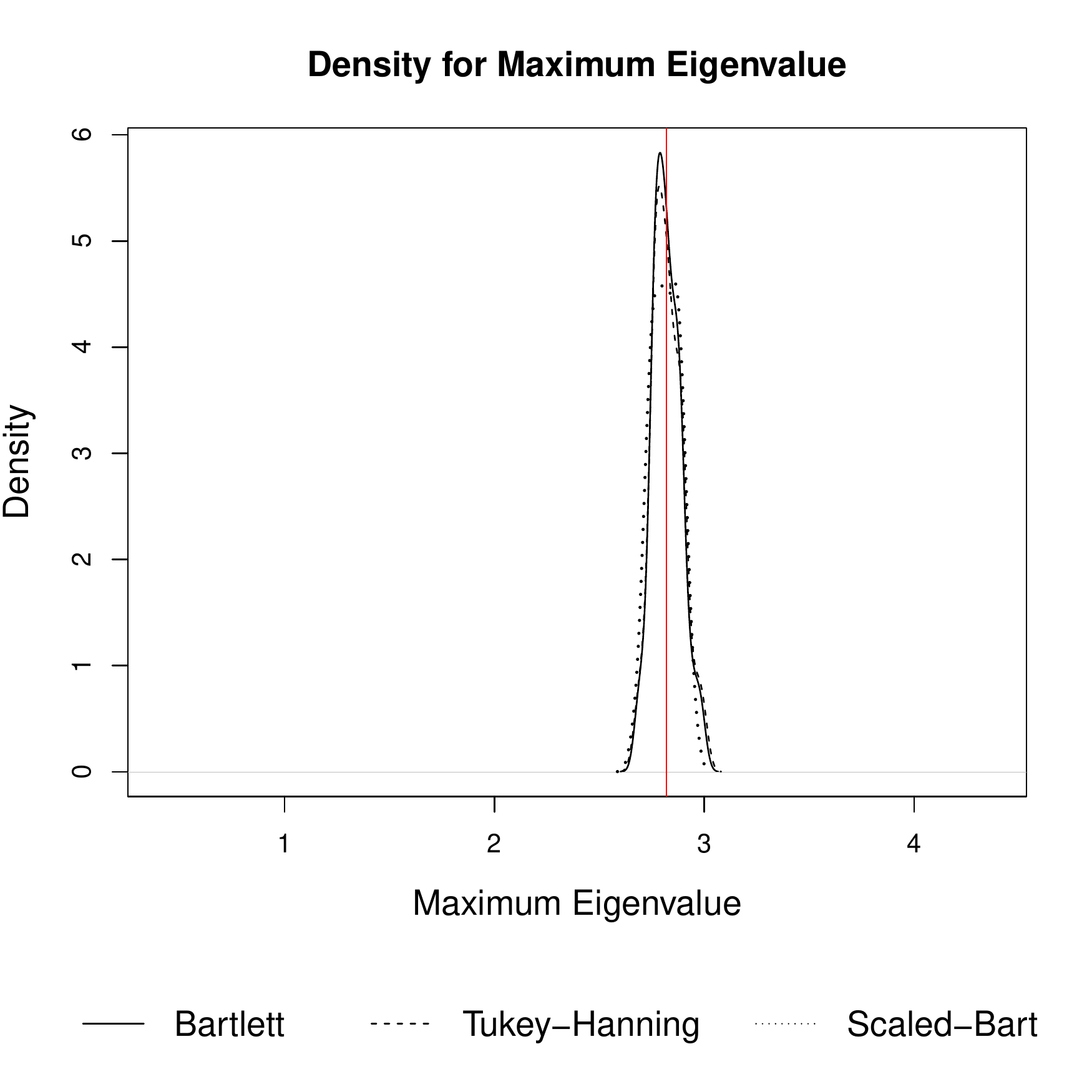}}
 \caption{Kernel density of the maximum eigenvalue for the three MSVEs over 100 replications and increasing Monte Carlo sample sizes for Setting 1. The vertical line indicates the true eigenvalue of 2.683 calculated using \eqref{eq:true_sig}.}%
 \label{fig:eig_clt}
\end{figure}


\FloatBarrier

\section{Discussion} 
\label{sec:discussion}

Estimation of the asymptotic covariance matrix in the CLT as in
\eqref{eq:multi_clt} has received little attention in the MCMC
literature thus far.  Due to the results of this paper, practitioners
are now equipped with a class of strongly consistent multivariate
spectral variance estimators of $\Sigma$.

However, multivariate spectral variance estimators are also
encountered outside of the MCMC context. For example, they are often
used for heteroscedastic and autocorrelation consistent (HAC) estimation of covariance matrices
which, for example, arise in the study of generalized method of
moments and autoregressive processes with heteroscedastic errors. See
\cite{andr:1991} for motivating examples. In the context of HAC
estimation, \cite{dejo:2000} obtained conditions under which the class
of MSVEs are strongly consistent.  However, these conditions are
restrictive in the context of MCMC.  In particular, his Assumption 2
\citep[][page 264]{dejo:2000} will not be satisfied in many typical
MCMC applications.  Additionally, we require weaker mixing conditions
on the underlying stochastic process. That is, although Markov chains
are the primary focus for us, our results hold for much more general
stochastic processes as we explain below.

Our main assumption on the underlying stochastic process are the SIPs
as stated in \eqref{eq:multi_sip} and \eqref{eq:multi_sip_h}.  The
existence of an SIP has attracted much research interest.  Consider
the univariate case.  For independent and identically distributed
(i.i.d) processes, the first result of this kind is due to
\cite{stra:1964} who showed $\psi(n) = \sqrt{n \log \log
  n}$. \cite{koml:majo:tusn:1975} found that if $\E_F |g|^{2 + \delta}
< \infty$, then $\psi(n) = n^{1/2 - \lambda}$ for $\lambda > 0$ (often
called the KMT bound). \cite{koml:majo:tusn:1975} also showed that if
$g$ has all moments in a neighborhood of 0, then $\psi(n) = \log
n$. The results of \cite{koml:majo:tusn:1975} are the strongest to
date in the i.i.d setting. The main reference for a univariate strong
invariance principle for dependent sequences is \cite{phil:stou:1975}
who prove bounds similar to that of \cite{koml:majo:tusn:1975} for a
variety of weakly dependent processes including $\phi$-mixing,
regenerative and strongly mixing processes.  Also, see \citet{wu:2007}
for a univariate strong invariance principle for certain classes of
dependent processes.

Many of the univariate SIPs have been extended to the multivariate
setting. For independent processes, \cite{berk:phil:1979},
\cite{einm:1989}, and \cite{zait:1998} extend the results of
\cite{koml:majo:tusn:1975}. For correlated processes, \cite{eber:1986}
showed the existence of a strong invariance principle for Martingale
sequences and \cite{horv:1984} proved the KMT bound for multivariate
extended renewal processes. For $\phi$-mixing, strongly mixing, and
absolutely regular processes, \cite{kuel:phil:1980} and
\cite{dehl:phil:1982} extended the \cite{phil:stou:1975} results to
the multivariate case. 

\section{Acknowledgment} 

The authors thank Tiefeng Jiang and Gongjun Xu for helpful discussions.

\begin{appendix}

\section{Strong Consistency of MSVE} 
\label{sec:appendixA}

Before we begin the proof of Theorem~\ref{thm:main} we note some
useful properties of Brownian motion and lag
windows which will be used often throughout the proof.

\subsection{Brownian Motion}
\label{sec:BM}

Recall that $\{B(t)\}_{t\geq 0}$ denotes a $p$-dimensional standard
Brownian motion and that $B^{(i)}$ denotes the $i$th component of $B(t)$.
 
\begin{lemma}[\citet{csor:reve:1981}]
\label{bn.diff.bound}
Suppose Condition \ref{cond:bn} holds, then for all $\epsilon > 0$ and for almost all sample paths, there exists $n_0(\epsilon)$ such that for all $n \geq n_0$ and all $i = 1, \dots, p$
\[\sup_{0\leq t \leq n-b_n} \sup_{0 \leq s \leq b_n} \left|B^{(i)}(t+s) - B^{(i)}(t)\right| < (1+\epsilon) \left(2b_n \left(\log{\dfrac{n}{b_n}} + \log{\log n}  \right) \right) ^{1/2},  \]
\[\sup_{0 \leq s \leq b_n} \left|B^{(i)}(n) - B^{(i)}(n-s) \right| < (1+\epsilon) \left(2b_n \left(\log{\dfrac{n}{b_n}} + \log{\log n}  \right) \right) ^{1/2} \text{, and } \] 
\[\left| B^{(i)}(n) \right|  < (1 + \epsilon) \sqrt{2n \log \log n}.\]
\end{lemma}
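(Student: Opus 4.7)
The lemma is attributed to \citet{csor:reve:1981}, so the most direct plan is simply to cite the corresponding theorems in that monograph; what follows sketches the route one would take if reproducing the arguments. All three inequalities concern a one-dimensional standard Brownian motion $B^{(i)}$, so the multivariate nature of $B$ plays no role here and one can work coordinate by coordinate.

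For inequality~(1), the classical theorem on increments of Brownian motion, the starting point is the Gaussian tail bound
\[
\Pr\bigl(|B^{(i)}(t+s)-B^{(i)}(t)|>x\bigr)\le 2\exp(-x^2/(2s)),
\]
valid for any fixed $t\ge 0$ and $s>0$. To convert this into control of the double supremum, I would partition $[0,n-b_n]$ into roughly $n/b_n$ overlapping blocks of length $2b_n$ and bound the fluctuation within each block using L\'evy's modulus of continuity, which costs only a constant factor beyond the elementary Gaussian tail. With the threshold $x_n=(1+\epsilon)\sqrt{2b_n(\log(n/b_n)+\log\log n)}$, the probability that any block exceeds $x_n$ is of order $(n/b_n)\exp(-x_n^2/(2b_n))$, whose sum along a geometric subsequence $n_k=\lfloor\rho^k\rfloor$ is finite. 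Borel--Cantelli then gives the bound for $n=n_k$, and the monotonicity assumptions on $b_n$ and $n/b_n$ in Condition~\ref{cond:bn} allow interpolation between consecutive $n_k$.

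Inequality~(2) is strictly simpler: the supremum runs only over one interval of length $b_n$ anchored at the fixed endpoint $n$, so the same Gaussian tail bound applied to the distribution of $\sup_{0\le s\le b_n}|B^{(i)}(n)-B^{(i)}(n-s)|$ (a reflected Brownian increment) yields the bound after a one-line Borel--Cantelli step. Inequality~(3) is just the law of iterated logarithm for Brownian motion: $\limsup_{n\to\infty}|B^{(i)}(n)|/\sqrt{2n\log\log n}=1$ almost surely, so for any $\epsilon>0$ the stated bound holds for all $n$ sufficiently large on almost every sample path.

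The main obstacle, if one tried to reprove (1) from scratch rather than quote the monograph, is recovering the sharp constant $(1+\epsilon)$. A crude union bound over the continuous parameters $t$ and $s$ produces a constant strictly larger than $1$; eliminating the slack requires the chaining-plus-geometric-subsequence argument sketched above, and it is precisely to make this interpolation valid that Condition~\ref{cond:bn} imposes that both $b_n$ and $n/b_n$ be non-decreasing. Since the lemma is stated only as a tool for later use and the sharp version is available in \citet{csor:reve:1981}, the cleanest route in the paper is to cite their result rather than reconstruct the chaining argument here.
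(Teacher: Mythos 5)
Your proposal is correct and takes essentially the same approach as the paper: the paper offers no proof of this lemma, simply citing \citet{csor:reve:1981}, whose increment theorem for the Wiener process (together with the law of the iterated logarithm for the third bound) is exactly what you invoke, and whose hypotheses on $b_n$ and $n/b_n$ are precisely the monotonicity requirements of Condition~\ref{cond:bn}. Your sketch of the underlying chaining/Borel--Cantelli argument is a faithful outline of how that result is established, so nothing further is needed.
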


Let $L$ be a lower triangular matrix and set $\Sigma=LL^{T}$. Define
$C(t) := LB(t)$ and if $C^{(i)}(t)$ is the $i$th component of $C(t)$, define
\[
\bar{C}^{(i)}_l(k)  := \dfrac{1}{k} \left(C^{(i)}(l+k) - C^{(i)}(l)
\right) \text{ and } \bar{C}^{(i)}_n  := \dfrac{1}{n} C^{(i)}(n)
\; .
\]
Since $C^{(i)}(t) \sim N(0, t \Sigma_{ii})$, where $\Sigma_{ii}$ is the $i$th diagonal of $\Sigma$, $C^{(i)}/\sqrt{\Sigma_{ii}}$ is a 1-dimensional standard Brownian motion. As a consequence, we have the following corollaries of Lemma \ref{bn.diff.bound}.

\begin{corollary}
Suppose Condition \ref{cond:bn} holds, then for all $\epsilon > 0$ and for almost all sample paths there exists $n_0(\epsilon)$ such that for all $n \geq n_0$ and all $i = 1, \dots, p$
\begin{equation}
\label{n.bound}
\left|C^{(i)} (n)\right| < (1 + \epsilon) (2n \Sigma_{ii} \log{\log n})^{1/2},
\end{equation}
where $\Sigma_{ii}$ is the $i$th diagonal entry of $\Sigma$. 
\end{corollary}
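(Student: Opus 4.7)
The plan is to deduce this corollary directly from the third inequality of Lemma~\ref{bn.diff.bound} by reducing the problem to a one-dimensional standard Brownian motion via rescaling, as already flagged in the paragraph preceding the statement.

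First I would observe that since $L$ is lower triangular with $LL^{T} = \Sigma$, the $i$th coordinate of $C(t) = LB(t)$ is $C^{(i)}(t) = \sum_{j=1}^{i} L_{ij}\, B^{(j)}(t)$, a linear combination of independent standard Brownian motions. Hence $C^{(i)}$ is a mean-zero Gaussian process with independent, stationary increments, and $\mathrm{Var}(C^{(i)}(t)) = t \sum_{j=1}^{i} L_{ij}^{2} = t\, \Sigma_{ii}$, where $\Sigma_{ii} > 0$ since $\Sigma$ is positive definite (so $L$ has no zero row). Define
\[
\tilde{B}^{(i)}(t) := \frac{C^{(i)}(t)}{\sqrt{\Sigma_{ii}}}.
\]
Then $\tilde{B}^{(i)}$ is a one-dimensional standard Brownian motion.

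Next I would apply the third inequality of Lemma~\ref{bn.diff.bound} to $\tilde{B}^{(i)}$: under Condition~\ref{cond:bn}, for every $\epsilon > 0$ and almost every sample path there exists $n_0(\epsilon)$ such that for all $n \geq n_0$,
\[
\bigl|\tilde{B}^{(i)}(n)\bigr| < (1+\epsilon)\sqrt{2n \log \log n}.
\]
Multiplying both sides by $\sqrt{\Sigma_{ii}}$ gives $|C^{(i)}(n)| < (1+\epsilon)(2n\, \Sigma_{ii} \log \log n)^{1/2}$, which is exactly \eqref{n.bound}. Taking the maximum of the finitely many $n_0(\epsilon)$ obtained for $i = 1, \dots, p$ yields a single threshold that works uniformly in $i$.

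There is essentially no obstacle here: the only subtlety is to confirm that $\tilde{B}^{(i)}$ is genuinely a standard Brownian motion (that is, to check independence and stationarity of increments together with the correct variance scaling), but this is immediate from the linearity of $C^{(i)}$ in the independent components of $B$. The result is therefore a direct corollary of Lemma~\ref{bn.diff.bound}, as the surrounding text suggests.
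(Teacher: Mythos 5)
Your proposal is correct and follows essentially the same route as the paper: the text immediately preceding the corollary notes that $C^{(i)}(t) \sim N(0, t\Sigma_{ii})$ so that $C^{(i)}/\sqrt{\Sigma_{ii}}$ is a one-dimensional standard Brownian motion, and the corollary is obtained exactly as you do, by applying the third inequality of Lemma~\ref{bn.diff.bound} to this rescaled process and multiplying through by $\sqrt{\Sigma_{ii}}$. Your added verification that the increments are independent, stationary, and correctly scaled simply makes explicit what the paper leaves implicit.
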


\begin{corollary}
Suppose Condition \ref{cond:bn} holds, then for all $\epsilon > 0$ and for almost all sample paths, there exists $n_0(\epsilon)$ such that for all $n \geq n_0$ and all $i = 1, \dots, p$
\begin{equation}
\label{diff.bound}
\left|\bar{C}^{(i)}_l(k)\right|  \leq \dfrac{1}{k}  \sup_{0 \leq l \leq n-b_n} \sup_{0 \leq s \leq b_n} \left|C^{(i)}(l+s) - C^{(i)}(l) \right|  <   \dfrac{1}{k} 2 (1 + \epsilon) (b_n \Sigma_{ii} \log{n})^{1/2},
\end{equation}
where $\Sigma_{ii}$ is the $i$th diagonal entry of $\Sigma$. 
\end{corollary}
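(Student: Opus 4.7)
The plan is to reduce the bound to a direct application of the first inequality in Lemma~\ref{bn.diff.bound}, after rescaling the $i$th component of $C=LB$ to produce a standard one-dimensional Brownian motion. The first inequality in \eqref{diff.bound} is immediate: for the relevant range $0\leq l\leq n-b_n$ and $0\leq k\leq b_n$, the displayed increment $C^{(i)}(l+k)-C^{(i)}(l)$ divided by $k$ is bounded in absolute value by the supremum on the right, so the real work lies in the second inequality.

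For the second inequality, I would first observe that because $L$ is lower triangular with $LL^T=\Sigma$, the coordinate $C^{(i)}(t)=\sum_{j=1}^p L_{ij}B^{(j)}(t)$ is a linear combination of independent standard one-dimensional Brownian motions. Hence $C^{(i)}$ is a mean-zero Gaussian process with independent increments and $\operatorname{Var}(C^{(i)}(t))=t\sum_{j=1}^p L_{ij}^2 = t(LL^T)_{ii}=t\Sigma_{ii}$. Since $\Sigma$ is positive definite we have $\Sigma_{ii}>0$, so $\widetilde B(t):=C^{(i)}(t)/\sqrt{\Sigma_{ii}}$ is itself a standard scalar Brownian motion. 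Applying the first conclusion of Lemma~\ref{bn.diff.bound} to $\widetilde B$ and multiplying through by $\sqrt{\Sigma_{ii}}$ yields, for $n\geq n_0(\epsilon)$,
\[
\sup_{0\leq t\leq n-b_n}\sup_{0\leq s\leq b_n}\bigl|C^{(i)}(t+s)-C^{(i)}(t)\bigr| < (1+\epsilon)\sqrt{2\,b_n\,\Sigma_{ii}\bigl(\log(n/b_n)+\log\log n\bigr)}\,.
\]

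To conclude, I would absorb the awkward logarithmic term using the elementary estimate $\log(n/b_n)+\log\log n\leq 2\log n$, valid for all sufficiently large $n$: indeed $\log(n/b_n)\leq\log n$ since Condition~\ref{cond:bn} gives $b_n\geq 1$, and $\log\log n\leq\log n$ once $n\geq e$. Substituting into the previous display collapses the constants via $\sqrt{2\cdot 2}=2$, producing the upper bound $2(1+\epsilon)\sqrt{b_n\Sigma_{ii}\log n}$ on the supremum; dividing by $k$ then gives both inequalities of \eqref{diff.bound}. There is no substantive obstacle: the argument is purely a rescaling-plus-logarithm-bookkeeping exercise built on Lemma~\ref{bn.diff.bound}, and it mirrors the derivation of \eqref{n.bound} in the preceding corollary, with the oscillation bound replacing the large-time bound on $|B^{(i)}(n)|$.
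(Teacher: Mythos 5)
Your proposal is correct and follows essentially the same route as the paper, which likewise derives the corollary by noting that $C^{(i)}/\sqrt{\Sigma_{ii}}$ is a standard one-dimensional Brownian motion and invoking the first inequality of Lemma~\ref{bn.diff.bound}; your explicit bookkeeping $\log(n/b_n)+\log\log n\leq 2\log n$ is exactly the step the paper leaves implicit. Nothing further is needed.
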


\subsection{Basic Properties of Lag Windows}

Recall that the lag window $w_n(\cdot)$ is such that it satisfies Condition \ref{cond:lag.window}. We will require the following results about the lag window $w_n(\cdot)$.
\begin{lemma}[\citet{dame:1991}]
\label{lemma:lag.res}
Under Condition \ref{cond:lag.window}, 
\begin{enumerate}[(i)]
\item $\Delta_1 w_n(s) = \ds \sum_{k=s}^{b_n} \Delta_2w_n(k)$, 
\item $ \ds \sum_{k=s+1}^{b_n} \Delta_1 w_n(k) = w_n(s)$, and
\item $\ds \sum_{k=1}^{b_n} \Delta_1 w_n(k) = 1$.
\end{enumerate}
\end{lemma}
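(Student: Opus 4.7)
The plan is to prove each of the three identities by direct telescoping, using only the elementary boundary values supplied by Condition~\ref{cond:lag.window}, namely $w_n(0) = 1$ and $w_n(k) = 0$ for $|k| \geq b_n$ (which in particular gives $w_n(b_n) = w_n(b_n+1) = 0$).

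For part (i), I would first rewrite the second-order difference as a difference of two consecutive first-order differences:
\[
\Delta_2 w_n(k) = [w_n(k-1) - w_n(k)] - [w_n(k) - w_n(k+1)] = \Delta_1 w_n(k) - \Delta_1 w_n(k+1).
\]
Summing from $k=s$ to $k=b_n$ then telescopes to $\Delta_1 w_n(s) - \Delta_1 w_n(b_n + 1)$. The second term vanishes because $\Delta_1 w_n(b_n+1) = w_n(b_n) - w_n(b_n+1) = 0 - 0 = 0$, which yields (i).

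For part (ii), I would expand $\Delta_1 w_n(k) = w_n(k-1) - w_n(k)$ and telescope the sum from $k = s+1$ to $k = b_n$, obtaining $w_n(s) - w_n(b_n) = w_n(s)$ since $w_n(b_n) = 0$. Part (iii) is then the special case $s = 0$ of (ii), combined with $w_n(0) = 1$.

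No real obstacle is anticipated: all three identities are pure telescoping once $\Delta_2 w_n$ is expressed as a first difference of $\Delta_1 w_n$ and the vanishing of $w_n$ at and beyond $b_n$ is invoked. The only point requiring care is tracking the upper boundary term in (i), which is why it is convenient to extend the sum up to $b_n$ (rather than $b_n - 1$) so that the telescoped remainder $\Delta_1 w_n(b_n+1)$ automatically vanishes.
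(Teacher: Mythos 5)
Your proof is correct and complete: the identity $\Delta_2 w_n(k) = \Delta_1 w_n(k) - \Delta_1 w_n(k+1)$ together with the boundary facts $w_n(0)=1$ and $w_n(b_n)=w_n(b_n+1)=0$ from Condition~\ref{cond:lag.window} makes all three parts pure telescoping sums, and you correctly handle the upper boundary term $\Delta_1 w_n(b_n+1)=0$ in (i). The paper itself gives no proof, simply attributing the lemma to \citet{dame:1991}, and your argument is exactly the standard derivation behind that citation, so there is nothing further to reconcile.
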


\subsection{Proof of Theorem~\ref{thm:main}}

Recall that
\begin{equation}
\label{eq:sigma_wn}
\widehat{\Sigma}_{w,n} = \dfrac{1}{n} \ds \sum_{l=0}^{n-b_n} \sum_{k=1}^{b_n} k^2 \Delta_2 w_n(k) [\bar{Y}_l(k) - \bar{Y}_n][\bar{Y}_l(k) - \bar{Y}_n]^T \, .   
\end{equation}
For $t = 1, 2, \dots, n$, define $Z_t = Y_t - \bar{Y}_n$ and
\begin{align}
d_n  & = \dfrac{1}{n}  \left\{ \sum_{t=1}^{b_n} \Delta_1w_n(t)  \left( \sum_{l=1}^{t-1} Z_lZ_l^T + \sum_{l= n-b_n+t+1}^{n} Z_lZ_l^T  \right) \right. \nonumber \\
&\quad \left. +    \sum_{s=1}^{b_n-1}\left[  \sum_{t=1}^{b_n-s} \Delta_1 w_n(s+t) \left( \sum_{l=1}^{t-1} \left(Z_lZ_{l+s}^T + Z_{l+s} Z_{l}^T \right) + \sum_{l=n-b_n+t+1}^{n-s} \left(Z_lZ_{l+s}^T + Z_{l+s} Z_{l}^T \right)  \right)  \right]\right\} \;. \label{eq:dn}
\end{align}
Notice that in \eqref{eq:dn} we use the convention that empty sums are zero. 
\begin{lemma}
\label{lemma:dn}

Under Condition \ref{cond:lag.window}, $\widehat{\Sigma}_{w,n} = \widehat{\Sigma}_{S} - d_{n}$.
\end{lemma}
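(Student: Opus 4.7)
The plan is to expand the matrix-valued summand inside $\widehat{\Sigma}_{w,n}$ in terms of the $Z_t$'s and then apply summation by parts twice, using Lemma~\ref{lemma:lag.res} to convert $\Delta_2 w_n$ first into $\Delta_1 w_n$ and then into $w_n$. Since $k[\bar{Y}_l(k) - \bar{Y}_n] = \sum_{t=1}^k Z_{l+t}$, I can write
\[
k^2 [\bar{Y}_l(k) - \bar{Y}_n][\bar{Y}_l(k) - \bar{Y}_n]^T = \sum_{t=1}^k Z_{l+t}Z_{l+t}^T + \sum_{s=1}^{k-1} \sum_{t=1}^{k-s}\left(Z_{l+t}Z_{l+t+s}^T + Z_{l+t+s}Z_{l+t}^T\right),
\]
which splits the contribution to $\widehat{\Sigma}_{w,n}$ into a lag-$0$ (diagonal) piece and a lag-$s$ (off-diagonal) piece for each $s \geq 1$. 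I will process these two types of pieces separately.

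For the diagonal piece, I swap the order of summation over $k$ and $t$ so that the inner sum $\sum_{k=t}^{b_n}\Delta_2 w_n(k)$ collapses to $\Delta_1 w_n(t)$ by Lemma~\ref{lemma:lag.res}(i). Summing next over $l = 0, \dots, n-b_n$ produces $\sum_{m=t}^{n-b_n+t} Z_m Z_m^T$, which I rewrite as the full sum $\sum_{m=1}^n Z_m Z_m^T$ minus the two boundary pieces $\sum_{m=1}^{t-1} Z_m Z_m^T$ and $\sum_{m=n-b_n+t+1}^{n} Z_m Z_m^T$. Using $\sum_{t=1}^{b_n} \Delta_1 w_n(t) = 1$ from Lemma~\ref{lemma:lag.res}(iii) together with $w_n(0) = 1$, the full-sum contribution becomes exactly $w_n(0)\gamma_n(0)$, while the two boundary terms match, with a minus sign, the first brace of \eqref{eq:dn}.

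For each lag $s \geq 1$ the analogous double Abel summation applies. Swapping the order of the $k$ and $t$ sums turns the constraint $t \leq k - s$ into $k \geq t + s$, and Lemma~\ref{lemma:lag.res}(i) gives $\sum_{k=t+s}^{b_n} \Delta_2 w_n(k) = \Delta_1 w_n(t+s)$. After the change of variable $m = l + t$, the inner block reads $\sum_{m=t}^{n-b_n+t}(Z_m Z_{m+s}^T + Z_{m+s}Z_m^T)$, which I split as $\sum_{m=1}^{n-s}$ minus the two boundary sums that appear in the second brace of \eqref{eq:dn}. Now Lemma~\ref{lemma:lag.res}(ii) yields $\sum_{t=1}^{b_n-s}\Delta_1 w_n(t+s) = \sum_{k=s+1}^{b_n} \Delta_1 w_n(k) = w_n(s)$; since $w_n$ is even and $\gamma_n(-s) = \gamma_n(s)^T$, the full-sum contribution sums to $\sum_{s=1}^{b_n-1} w_n(s)[\gamma_n(s) + \gamma_n(-s)]$, which combined with the diagonal piece reconstructs $\widehat{\Sigma}_S$, while the boundary sums reproduce the second brace of \eqref{eq:dn}.

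The only real obstacle is bookkeeping: tracking the ranges of $l$, $t$, $s$, and $m$ when passing from $\sum_{m=t}^{n-b_n+t}$ to the full sum plus two boundary corrections, and verifying that the empty-sum convention in \eqref{eq:dn} correctly handles the degenerate endpoints $t=1$ and $t = b_n - s$. Once the two Abel-summation steps are executed carefully, the identity $\widehat{\Sigma}_{w,n} = \widehat{\Sigma}_S - d_n$ drops out term by term.
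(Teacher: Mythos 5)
Your proposal is correct and follows essentially the same route as the paper's proof: expand $k^2[\bar{Y}_l(k)-\bar{Y}_n][\bar{Y}_l(k)-\bar{Y}_n]^T$ into lag-$0$ and lag-$s$ pieces, swap the summation order so that Lemma~\ref{lemma:lag.res} telescopes $\Delta_2 w_n$ to $\Delta_1 w_n$ and then to $w_n$, and identify the full inner sums with $n\gamma_n(s)$ while the boundary corrections assemble into $d_n$. The only cosmetic difference is that you work with the matrix-valued summand directly (pairing the two cross terms via symmetry and evenness of $w_n$) whereas the paper carries out the identical computation entrywise, treating the $(i,j)$ and $(j,i)$ cross terms separately.
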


\begin{proof}
For  $i,j = 1, \dots, p$, let $\widehat{\Sigma}_{w,ij}$ denote the $(i,j)$th entry of $\widehat{\Sigma}_{w,n}$. Then,
\begin{align*}
\widehat{\Sigma}_{w,ij} & = \dfrac{1}{n} \ds \sum_{l=0}^{n-b_n} \ds\sum_{k=1}^{b_n} k^2 \Delta_2 w_n(k) \left[\bar{Y}^{(i)}_l(k) - \bar{Y}^{(i)}_n  \right] \left[\bar{Y}^{(j)}_l(k) - \bar{Y}^{(j)}_n  \right]\\
& = \dfrac{1}{n} \ds \sum_{l=0}^{n-b_n} \ds\sum_{k=1}^{b_n}  \Delta_2 w_n(k) \left[\ds \sum_{t = 1}^{k} Y^{(i)}_{l+t} - k\bar{Y}^{(i)}_n  \right]\left[\ds \sum_{t = 1}^{k} Y^{(j)}_{l+t} - k\bar{Y}^{(j)}_n  \right]\\
& = \dfrac{1}{n} \ds \sum_{l=0}^{n-b_n} \ds\sum_{k=1}^{b_n}  \Delta_2 w_n(k) \left[\ds \sum_{t=1}^{k}Z^{(i)}_{l+t}  \right]\left[\ds \sum_{t=1}^{k}Z^{(j)}_{l+t}  \right]\\
& = \dfrac{1}{n} \ds \sum_{l=0}^{n-b_n} \ds\sum_{k=1}^{b_n}  \Delta_2 w_n(k) \left[\ds \sum_{t=1}^{k} Z^{(i)}_{l+t}Z^{(j)}_{l+t} + \ds \sum_{s=1}^{k-1} \ds \sum_{t=1}^{k-s}Z^{(i)}_{l+t}Z^{(j)}_{l+t+s} + \ds \sum_{s=1}^{k-1} \ds \sum_{t=1}^{k-s}Z^{(j)}_{l+t}Z^{(i)}_{l+t+s}  \right]. \numberthis \label{eq:zterms}
\end{align*}
Notice that in \eqref{eq:zterms}, we use the convention that empty sums are zero. 
We will consider each term in \eqref{eq:zterms} separately. For the first term, changing the order of summation  and then using Lemma \ref{lemma:lag.res},
\begin{align*}
& \dfrac{1}{n} \ds \sum_{l=0}^{n-b_n} \ds\sum_{k=1}^{b_n} \ds \sum_{t=1}^{k} \Delta_2 w_n(k) Z^{(i)}_{l+t}Z^{(j)}_{l+t}\\
 & = \dfrac{1}{n} \ds \sum_{l=0}^{n-b_n} \ds\sum_{t=1}^{b_n} \ds \sum_{k=t}^{b_n} \Delta_2 w_n(k) Z^{(i)}_{l+t}Z^{(j)}_{l+t}\\
& = \dfrac{1}{n} \ds \sum_{l=0}^{n-b_n} \ds\sum_{t=1}^{b_n} \Delta_1 w_n(t) Z^{(i)}_{l+t}Z^{(j)}_{l+t} \\
& = \dfrac{1}{n} \ds \sum_{t=1}^{b_n}  \Delta_1 w_n(t) \ds \sum_{l = 0}^{n-b_n}   Z^{(i)}_{l+t}Z^{(j)}_{l+t}\\
& =  \ds \sum_{t=1}^{b_n}  \Delta_1 w_n(t) \left[  \gamma_{n,ij}(0) - \dfrac{1}{n}  \left( Z^{(i)}_1 Z^{(j)}_1 + \dots +Z^{(i)}_{t-1}Z^{(j)}_{t-1} + Z^{(i)}_{n-b_n+t+1}Z^{(j)}_{n-b_n+t+1}  +  \dots + Z^{(i)}_{n}Z^{(j)}_{n}   \right) \right]\\
& =  \gamma_{n,ij}(0)  - \dfrac{1}{n}  \ds \sum_{t=1}^{b_n} \Delta_1 w_n(t) \left(\ds \sum_{l=1}^{t-1} Z^{(i)}_{l}Z^{(j)}_{l}  + \ds \sum_{l=n-b_n+t+1}^{n} Z^{(i)}_{l} Z^{(j)}_{l}  \right)   \quad \text{by Lemma \ref{lemma:lag.res}}. \numberthis \label{eq:zterms1}
\end{align*}
For the second term in \eqref{eq:zterms} we change the order of summation from $l,k,s,t$ to
$l,s,k,t$ to $l,s,t,k$ to get
 \begin{align*}
& \frac{1}{n} \sum_{l=0}^{n-b_n} \sum_{k=1}^{b_n}  \sum_{s=1}^{k-1}
 \sum_{t=1}^{k-s} \Delta_2 w_n(k) Z^{(i)}_{l+t}Z^{(j)}_{l+t+s} \\
 &= \dfrac{1}{n} \ds \sum_{l=0}^{n-b_n} \ds\sum_{s=1}^{b_n-1}   \ds
 \sum_{k=s+1}^{b_n} \ds \sum_{t=1}^{k-s} \Delta_2 w_n(k)
 Z^{(i)}_{l+t}Z^{(j)}_{l+t+s}  \\
& = \dfrac{1}{n} \ds \sum_{l=0}^{n-b_n} \ds\sum_{s=1}^{b_n-1}   \ds
\sum_{t=1}^{b_n-s} \ds \sum_{k = t+s}^{b_n} \Delta_2 w_n(k)
Z^{(i)}_{l+t}Z^{(j)}_{l+t+s} \\
 & = \dfrac{1}{n} \ds \sum_{l=0}^{n-b_n} \ds\sum_{s=1}^{b_n-1}   \ds
 \sum_{t=1}^{b_n-s}  \Delta_1 w_n(s+t) Z^{(i)}_{l+t}Z^{(j)}_{l+t+s}
 \quad \quad \text{by Lemma \ref{lemma:lag.res}}\\
 & = \dfrac{1}{n} \ds \sum_{s=1}^{b_n-1} \ds\sum_{l=0}^{n - b_n}   \ds \sum_{t=1}^{b_n-s}  \Delta_1 w_n(s+t) Z^{(i)}_{l+t}Z^{(j)}_{l+t+s}\\
  & = \dfrac{1}{n} \ds \sum_{s=1}^{b_n-1} \ds\sum_{t=1}^{b_n-s}   \ds \sum_{l=0}^{n-b_n}  \Delta_1 w_n(s+t) Z^{(i)}_{l+t}Z^{(j)}_{l+t+s}\\
  & = \dfrac{1}{n} \ds \sum_{s=1}^{b_n-1} \ds\sum_{t=1}^{b_n-s} \Delta_1 w_n(s+t)  \ds \sum_{l=0}^{n-b_n}Z^{(i)}_{l+t}Z^{(j)}_{l+t+s}\\
    & = \ds \sum_{s=1}^{b_n-1} \ds\sum_{t=1}^{b_n-s} \Delta_1 w_n(s+t) \left[ \gamma_{n,ij}(s) -  \dfrac{1}{n} \ds \sum_{l=1}^{t-1}Z^{(i)}_{l}Z^{(j)}_{l+s} - \dfrac{1}{n}\ds \sum_{l=n-b_n+t+1}^{n-s}Z^{(i)}_{l}Z^{(j)}_{l+s} \right]\\
& = \ds \sum_{s = 1}^{b_n-1} w_n(s) \gamma_{n,ij}(s)\\
& \quad   -  \dfrac{1}{n}
\ds \sum_{s=1}^{b_n-1} \ds\sum_{t=1}^{b_n-s} \Delta_1 w_n(s+t)\left[ \ds \sum_{l=1}^{t-1} Z^{(i)}_{l} Z^{(j)}_{l+s} +
  \ds \sum_{l=n-b_n+t+1}^{n-s} Z^{(i)}_{l} Z^{(j)}_{l+s} \right] \quad
\text{by Lemma \ref{lemma:lag.res}}. \numberthis \label{eq:zterms2}
 \end{align*}

Repeating the same steps as in the second term we reduce the third term in \eqref{eq:zterms} to 
\begin{align*}
& \dfrac{1}{n} \ds \sum_{l=0}^{n-b_n} \ds\sum_{k=1}^{b_n}  
 \sum_{s=1}^{k-1} \ds \sum_{t=1}^{k-s} \Delta_2 w_n(k)
 Z^{(j)}_{l+t}Z^{(i)}_{l+t+s} \\ 
& =  \ds \sum_{s = 1}^{b_n-1} w_n(s)
 \gamma_{n,ji}(s) -  \dfrac{1}{n} \ds \sum_{s=1}^{b_n-1} \ds\sum_{t=1}^{b_n-s} \Delta_1 w_n(s+t)  \left[ \ds \sum_{l=1}^{t-1} Z^{(j)}_{l} Z^{(i)}_{l+s} + \ds \sum_{l=n-b_n+t+1}^{n-s} Z^{(j)}_{l} Z^{(i)}_{l+s} \right]\\
& = \ds \sum_{s = 1}^{b_n-1} w_n(-s) \gamma_{n,ij}(-s)  -  \dfrac{1}{n} \ds \sum_{s=1}^{b_n-1} \ds\sum_{t=1}^{b_n-s}
\Delta_1 w_n(s+t)  \left[ \ds \sum_{l=1}^{t-1} Z^{(j)}_{l}
  Z^{(i)}_{l+s} + \ds \sum_{l=n-b_n+t+1}^{n-s} Z^{(j)}_{l}
  Z^{(i)}_{l+s} \right] \; .\numberthis \label{eq:zterms3}
\end{align*}
Using \eqref{eq:zterms1}, \eqref{eq:zterms2}, and \eqref{eq:zterms3}
in \eqref{eq:zterms}
\begin{align*}
 \widehat{\Sigma}_{w,ij} = & \, \gamma_{n,ij}(0) +  \ds \sum_{s = 1}^{b_n-1} w_n(s) \gamma_{n,ij}(s) + \ds \sum_{s = -(b_n-1)}^{-1}  w_n(s) \gamma_{n,ij}(s)\\
&  - \dfrac{1}{n}  \ds \sum_{t=1}^{b_n} \Delta_1 w_n(t) \left(\ds \sum_{l=1}^{t-1} Z^{(i)}_{l}Z^{(j)}_{l}  + \ds \sum_{l=n-b_n+t+1}^{n} Z^{(i)}_{l}Z^{(j)}_{l}  \right) \\
& -  \dfrac{1}{n} \ds \sum_{s=1}^{b_n-1} \ds\sum_{t=1}^{b_n-s} \Delta_1 w_n(s+t)  \left[ \ds \sum_{l=1}^{t-1} (Z^{(i)}_{l}Z^{(j)}_{l+s} + Z^{(i)}_{l+s} Z^{(j)}_l) + \ds \sum_{l=n-b_n+t+1}^{n-s}(Z^{(i)}_{l} Z^{(j)}_{l+s} + Z^{(i)}_{l+s} Z^{(j)}_l) \right]\\
= & \ds \sum_{s = -(b_n-1)}^{b_n-1}  \gamma_{n,ij}(s) w_n(s) - d_{n,ij}\\
= & \widehat{\Sigma}_{S, ij} - d_{n,ij}.
\end{align*}
\end{proof}

Let $\widetilde{\gamma}_n(s)$, $\widetilde{\Sigma}_S,
\widetilde{\Sigma}_{w,n}$ and $\tilde{d}_n$ be the Brownian motion
analogs of \eqref{eq:gamma_n}, \eqref{eq:msve}, \eqref{eq:sigma_wn},
and \eqref{eq:dn}. Specifically, for $t = 1, \dots, n$, define
Brownian motion increments $U_t= B(t) - B(t-1)$, so that $U_1, \dots,
U_n$ are $\overset{iid}{\sim}$ N$_p(0, I_p$) where $I_p$ is the $p \times p$ identity matrix. For $l = 0, \dots,
n-b_n$ and $k = 1, \dots, b_n$ define $\bar{B}_l(k) = k^{-1} (B(l+k) -
B(l))$,  $\bar{B}_n = n^{-1} B(n)$, and $T_t = U_t -
\bar{B}_n$. Then 
\begin{align}
& \widetilde{\gamma}_n(s)  = \dfrac{1}{n} \ds \sum_{t \in I_s} (U_t - \bar{B}_n)(U_{t+s} - \bar{B}_n)^T = \dfrac{1}{n}\ds \sum_{t \in I_s} T_tT_{t+s}^T, \label{eq:gammatilde}\\
& \widetilde{\Sigma}_S = \ds \sum_{s = -(b_n-1)}^{b_n-1} w_n(s) \widetilde{\gamma}_n(s), \label{eq:sigmatilde_S}\\
& \widetilde{\Sigma}_{w,n} = \dfrac{1}{n} \ds \sum_{l=0}^{n-b_n} \sum_{k=1}^{b_n} k^2 \Delta_2 w_n(k) [\bar{B}_l(k) - \bar{B}_n][\bar{B}_l(k) - \bar{B}_n]^T, \label{eq:sigmatilde_wn}\\
& \tilde{d}_n = \dfrac{1}{n} \left \{\ds \sum_{t=1}^{b_n} \Delta_1w_n(t)  \left(\ds \sum_{l=1}^{t-1} T_lT_l^T + \ds\sum_{l= n-b_n+t+1}^{n} T_lT_l^T  \right)\right. \notag  \\
& \quad + \left.\ds \sum_{s=1}^{b_n-1} \left[ \ds \sum_{t=1}^{b_n-s} \Delta_1 w_n(s+t) \left(\ds \sum_{l=1}^{t-1}  (T_lT_{l+s}^T + T_{l+s}T_{l}^T) + \sum_{l=n-b_n+t+1}^{n-s} (T_lT_{l+s}^T + T_{l+s}T_{l}^T) \right)  \right]  \right\}. \numberthis \label{eq:dtilde_n}
\end{align}
Notice that in \eqref{eq:dtilde_n} we use the convention that empty sums are zero. 
Our goal is to show that $\widetilde{\Sigma}_{w,n} \to I_p$ as $n \to \infty$ with probability 1 in the following way.  
In Lemma~\ref{lemma:dn.tilde} we show that $\widetilde{\Sigma}_{w,n} =
\widetilde{\Sigma}_S - \tilde{d}_n$ and in Lemma~\ref{lemma:dtilde.0}
we show that the end term $\tilde{d}_n \to 0$ as $n \to \infty$ with
probability 1. Lemma~\ref{lemma:sigma.tilde_wn_I} shows that
$\widetilde{\Sigma}_{S} \to I_p$ as $n \to \infty$ with probability 1,
and hence $\widetilde{\Sigma}_{w,n} \to I_p$ as $n \to \infty$ with
probability 1. 
\begin{lemma}
\label{lemma:dn.tilde}
Under Condition \ref{cond:lag.window}, $\widetilde{\Sigma}_{w,n} =
\widetilde{\Sigma}_{S} - \tilde{d}_{n}$.
\end{lemma}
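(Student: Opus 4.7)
The plan is to mirror the proof of Lemma~\ref{lemma:dn} verbatim, since the claimed identity is purely algebraic and nothing about its derivation relies on the probabilistic nature of $\{Y_t\}$ versus Brownian increments. The correspondence is $Y_t \leftrightarrow U_t$, $Z_t \leftrightarrow T_t$, $\bar{Y}_n \leftrightarrow \bar{B}_n$, $\gamma_n \leftrightarrow \widetilde{\gamma}_n$, $\widehat{\Sigma}_S \leftrightarrow \widetilde{\Sigma}_S$, and $d_n \leftrightarrow \tilde{d}_n$.

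Concretely, I would fix indices $i,j \in \{1,\dots,p\}$ and use the identity $\sum_{t=1}^{k} T^{(i)}_{l+t} = k\bigl(\bar{B}^{(i)}_l(k) - \bar{B}^{(i)}_n\bigr)$ to rewrite the $(i,j)$ entry of $\widetilde{\Sigma}_{w,n}$ as
\[
\widetilde{\Sigma}_{w,ij} = \frac{1}{n}\sum_{l=0}^{n-b_n}\sum_{k=1}^{b_n}\Delta_2 w_n(k)\left[\sum_{t=1}^{k} T^{(i)}_{l+t}T^{(j)}_{l+t} + \sum_{s=1}^{k-1}\sum_{t=1}^{k-s} T^{(i)}_{l+t}T^{(j)}_{l+t+s} + \sum_{s=1}^{k-1}\sum_{t=1}^{k-s} T^{(j)}_{l+t}T^{(i)}_{l+t+s}\right],
\]
which is formally identical to \eqref{eq:zterms} with $Z_t$ replaced by $T_t$. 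Each of the three resulting sums is then processed by a sequence of summation interchanges: first trade the $k$ and $t$ sums (and, for the two off-diagonal pieces, also the $k$ and $s$ sums) to bring the inner sum $\sum_{k\geq t}\Delta_2 w_n(k)$ to the front, which Lemma~\ref{lemma:lag.res}(i) collapses to $\Delta_1 w_n(t)$. Reordering once more and applying Lemma~\ref{lemma:lag.res}(ii) further collapses a sum of the form $\sum_{k>s}\Delta_1 w_n(s+k)$ into $w_n(s)$. What survives is a bulk term of the form $w_n(s)\widetilde{\gamma}_{n,ij}(s)$ together with an end-effect correction coming from the boundary indices $l \in \{1,\dots,t-1\}$ and $l \in \{n-b_n+t+1,\dots,n-s\}$.

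Assembling the $s=0$ diagonal piece with the two off-diagonal pieces for $s=1,\dots,b_n-1$, and invoking the evenness $w_n(-s) = w_n(s)$ from Condition~\ref{cond:lag.window} to identify the $(j,i)$-type cross sum with $w_n(-s)\widetilde{\gamma}_{n,ij}(-s)$, yields exactly $\widetilde{\Sigma}_{S,ij} - \tilde{d}_{n,ij}$ with $\tilde{d}_n$ as defined in \eqref{eq:dtilde_n}. The main difficulty is purely bookkeeping: keeping the empty-sum convention consistent across the nested reorderings and correctly pairing the two boundary ranges so that they reassemble into the end-correction structure of $\tilde{d}_n$. Because no new ideas beyond those already used for Lemma~\ref{lemma:dn} are required, in practice the proof can be compressed to a remark that the calculation is verbatim the one carried out there.
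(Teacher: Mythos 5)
Your proposal is correct and is exactly what the paper does: its proof of Lemma~\ref{lemma:dn.tilde} repeats the computation of Lemma~\ref{lemma:dn} line for line with $T_t$ in place of $Z_t$, using the same summation interchanges, Lemma~\ref{lemma:lag.res}, and the evenness of $w_n$. The identity $\sum_{t=1}^{k} T^{(i)}_{l+t} = k\bigl(\bar{B}^{(i)}_l(k) - \bar{B}^{(i)}_n\bigr)$ you invoke is the correct starting point, so nothing is missing.
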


\begin{proof}
For  $i,j = 1, \dots, p$, let $\widetilde{\Sigma}_{w,ij}$ denote the $(i,j)$th entry of $\widetilde{\Sigma}_{w,n}$. Then,
\begin{align*}
\widetilde{\Sigma}_{w,ij} & = \dfrac{1}{n} \ds \sum_{l=0}^{n-b_n} \ds\sum_{k=1}^{b_n} k^2 \Delta_2 w_n(k) \left[\bar{B}^{(i)}_l(k) - \bar{B}^{(i)}_n  \right] \left[\bar{B}^{(j)}_l(k) - \bar{B}^{(j)}_n  \right]\\
& = \dfrac{1}{n} \ds \sum_{l=0}^{n-b_n} \ds\sum_{k=1}^{b_n}  \Delta_2 w_n(k) \left[B^{(i)}(k+l) - B^{(i)}(l) - k\bar{B}^{(i)}_n  \right]\left[B^{(j)}(k+l) - B^{(j)}(l) - k\bar{B}^{(j)}_n \right]\\
& = \dfrac{1}{n} \ds \sum_{l=0}^{n-b_n} \ds\sum_{k=1}^{b_n}  \Delta_2 w_n(k) \left[ \ds \sum_{t=1}^{k}U^{(i)}_{t+l} -  k\bar{B}^{(i)}_n   \right]\left[ \ds \sum_{t=1}^{k}U^{(j)}_{t+l} -  k\bar{B}^{(j)}_n   \right]\\
& = \dfrac{1}{n} \ds \sum_{l=0}^{n-b_n} \ds\sum_{k=1}^{b_n}  \Delta_2 w_n(k) \left[\ds \sum_{t=1}^{k}T^{(i)}_{t+l}  \right]\left[\ds \sum_{t=1}^{k}T^{(j)}_{t+l}  \right]\\
& = \dfrac{1}{n} \ds \sum_{l=0}^{n-b_n} \ds\sum_{k=1}^{b_n}  \Delta_2 w_n(k) \left[\ds \sum_{t=1}^{k} T^{(i)}_{l+t}T^{(j)}_{l+t} + \ds \sum_{s=1}^{k-1} \ds \sum_{t=1}^{k-s}T^{(i)}_{l+t}T^{(j)}_{l+t+s} + \ds \sum_{s=1}^{k-1} \ds \sum_{t=1}^{k-s}T^{(j)}_{l+t}T^{(i)}_{l+t+s}  \right]. \numberthis \label{eq:tterms}
\end{align*}
In \eqref{eq:tterms}, we continue to use convention that empty sums are zero.
We will look at each of the terms in \eqref{eq:tterms} separately. 
For the first term, changing the order of summation
and then using Lemma \ref{lemma:lag.res},
 \begin{align*}
& \dfrac{1}{n} \ds \sum_{l=0}^{n-b_n} \ds\sum_{k=1}^{b_n} \ds
 \sum_{t=1}^{k} \Delta_2 w_n(k) T^{(i)}_{l+t}T^{(j)}_{l+t}\\
 & = \dfrac{1}{n} \ds \sum_{l=0}^{n-b_n} \ds\sum_{t=1}^{b_n} \ds \sum_{k=t}^{b_n} \Delta_2 w_n(k) T^{(i)}_{l+t}T^{(j)}_{l+t}\\
& = \dfrac{1}{n} \ds \sum_{l=0}^{n-b_n} \ds\sum_{t=1}^{b_n} T^{(i)}_{l+t}T^{(j)}_{l+t} \Delta_1 w_n(t)\\
& = \dfrac{1}{n} \ds \sum_{t=1}^{b_n}  \Delta_1 w_n(t) \ds \sum_{l = 0}^{n-b_n}   T^{(i)}_{l+t}T^{(j)}_{l+t}\\
& =  \widetilde{\gamma}_{n,ij}(0)  - \dfrac{1}{n}  \ds
\sum_{t=1}^{b_n} \Delta_1 w_n(t) \left(\ds \sum_{l=1}^{t-1}
  T^{(i)}_{l}T^{(j)}_{l}  + \ds \sum_{l=n-b_n+t+1}^{n}
  T^{(i)}_{l}T^{(j)}_{l}  \right).  \numberthis \label{tterms1}
\end{align*}
For the second term in \eqref{eq:tterms} we change the order of summation from $l,k,s,t$ to
$l,s,k,t$ then to $l,s,t,k$ and use  Lemma \ref{lemma:lag.res} to get
 \begin{align*}
& \frac{1}{n} \sum_{l=0}^{n-b_n} \sum_{k=1}^{b_n} \sum_{s=1}^{k-1}
 \sum_{t=1}^{k-s} \Delta_2 w_n(k) T^{(i)}_{l+t}T^{(j)}_{l+t+s}\\
&= \dfrac{1}{n} \ds \sum_{l=0}^{n-b_n} \ds\sum_{s=1}^{b_n-1}   \ds \sum_{k=s+1}^{b_n} \ds \sum_{t=1}^{k-s} \Delta_2 w_n(k) T^{(i)}_{l+t}T^{(j)}_{l+t+s}\\
 & = \dfrac{1}{n} \ds \sum_{l=0}^{n-b_n} \ds\sum_{s=1}^{b_n-1}   \ds \sum_{t=1}^{b_n-s} \ds \sum_{k = t+s}^{b_n} \Delta_2 w_n(k) T^{(i)}_{l+t}T^{(j)}_{l+t+s}\\
 & = \dfrac{1}{n} \ds \sum_{l=0}^{n-b_n} \ds\sum_{s=1}^{b_n-1}   \ds \sum_{t=1}^{b_n-s}  \Delta_1 w_n(s+t) T^{(i)}_{l+t}T^{(j)}_{l+t+s}\\
 & = \dfrac{1}{n} \ds \sum_{s=1}^{b_n-1} \ds\sum_{l=0}^{n - b_n}   \ds \sum_{t=1}^{b_n-s}  \Delta_1 w_n(s+t) T^{(i)}_{l+t}T^{(j)}_{l+t+s}\\
  & = \dfrac{1}{n} \ds \sum_{s=1}^{b_n-1} \ds\sum_{t=1}^{b_n-s}   \ds \sum_{l=0}^{n-b_n}  \Delta_1 w_n(s+t) T^{(i)}_{l+t}T^{(j)}_{l+t+s}\\
  & = \dfrac{1}{n} \ds \sum_{s=1}^{b_n-1} \ds\sum_{t=1}^{b_n-s} \Delta_1 w_n(s+t)  \ds \sum_{l=0}^{n-b_n}T^{(i)}_{l+t}T^{(j)}_{l+t+s}\\
      & = \dfrac{1}{n} \ds \sum_{s=1}^{b_n-1} \ds\sum_{t=1}^{b_n-s}
      \Delta_1 w_n(s+t)
      \ds\sum_{l=t}^{n-b_n+t}T^{(i)}_{l}T^{(j)}_{l+s}\\
    & = \ds \sum_{s=1}^{b_n-1} \ds\sum_{t=1}^{b_n-s} \Delta_1 w_n(s+t)
     \left[ \tilde{\gamma}_{n,ij}(s) -  \dfrac{1}{n} \ds
  \sum_{l=1}^{t-1}T^{(i)}_{l}T^{(j)}_{l+s} - \dfrac{1}{n}\ds
  \sum_{l=n-b_n+t+1}^{n-s}T^{(i)}_{l}T^{(j)}_{l+s} \right] \\
& = \ds \sum_{s = 1}^{b_n-1} w_n(s) \tilde{\gamma}_{n,ij}(s)  -
\dfrac{1}{n} \ds \sum_{s=1}^{b_n-1} \ds\sum_{t=1}^{b_n-s} \Delta_1
w_n(s+t)   \left[ \ds \sum_{l=1}^{t-1}T^{(i)}_{l}T^{(j)}_{l+s} + \ds \sum_{l=n-b_n+t+1}^{n-s}T^{(i)}_{l}T^{(j)}_{l+s} \right]. \numberthis \label{eq:tterms2}
 \end{align*}

Repeating the same steps as in the second term we reduce the third term in \eqref{eq:tterms} to 
\begin{align*}
& \dfrac{1}{n} \ds \sum_{l=0}^{n-b_n} \ds\sum_{k=1}^{b_n}   \ds \sum_{s=1}^{k-1} \ds \sum_{t=1}^{k-s} \Delta_2 w_n(k) T^{(j)}_{l+t}T^{(i)}_{l+t+s} \\
 &= \ds \sum_{s = 1}^{b_n-1} w_n(s) \tilde{\gamma}_{n,ji}(s)  -  \dfrac{1}{n} \ds \sum_{s=1}^{b_n-1} \ds\sum_{t=1}^{b_n-s} \Delta_1 w_n(s+t)  \left[ \ds \sum_{l=1}^{t-1}T^{(j)}_{l}T^{(i)}_{l+s} + \ds \sum_{l=n-b_n+t+1}^{n-s}T^{(j)}_{l}T^{(i)}_{l+s} \right]\\
& = \ds \sum_{s = 1}^{b_n-1} w_n(-s) \tilde{\gamma}_{n,ij}(-s)  -  \dfrac{1}{n} \ds \sum_{s=1}^{b_n-1} \ds\sum_{t=1}^{b_n-s} \Delta_1 w_n(s+t)  \left[ \ds \sum_{l=1}^{t-1}T^{(j)}_{l}T^{(i)}_{l+s} + \ds \sum_{l=n-b_n+t+1}^{n-s}T^{(j)}_{l}T^{(i)}_{l+s} \right]. \numberthis \label{eq:tterms3}
\end{align*}
Using \eqref{tterms1}, \eqref{eq:tterms2}, and \eqref{eq:tterms3} in \eqref{eq:tterms}, we get
\begin{align*}
 \widetilde{\Sigma}_{w,ij} = & \, \tilde{\gamma}_{n,ij}(0) +  \ds \sum_{s = 1}^{b_n-1} w_n(s) \tilde{\gamma}_{n,ij}(s) + \ds \sum_{s = -(b_n-1)}^{-1}  w_n(s) \tilde{\gamma}_{n,ij}(s)\\
&  - \dfrac{1}{n}  \ds \sum_{t=1}^{b_n} \Delta_1 w_n(t) \left(\ds \sum_{l=1}^{t-1} T^{(i)}_{l}T^{(j)}_{l}  + \ds \sum_{l=n-b_n+t+1}^{n} T^{(i)}_{l}T^{(j)}_{l}  \right) \\
& -  \dfrac{1}{n} \ds \sum_{s=1}^{b_n-1} \ds\sum_{t=1}^{b_n-s} \Delta_1 w_n(s+t)  \left[ \ds \sum_{l=1}^{t-1} (T^{(i)}_{l}T^{(j)}_{l+s} + T^{(i)}_{l+s} T^{(j)}_l) + \ds \sum_{l=n-b_n+t+1}^{n-s}(T^{(i)}_{l}T^{(j)}_{l+s} + T^{(i)}_{l+s} T^{(j)}_l) \right]\\
= & \ds \sum_{s = -(b_n-1)}^{b_n-1} \tilde{\gamma}_{n,ij}(s) w_n(s) - \tilde{d}_{n,ij}\\
= & \widetilde{\Sigma}_{S, ij} - \tilde{d}_{n,ij}.
\end{align*}
\end{proof}
Next, we show that
as $n \to \infty$, $\tilde{d}_n \to 0$ with probability 1 implying
$\widetilde{\Sigma}_{w,n} - \widetilde{\Sigma}_S \to 0$ with probability 1 as $n \to \infty$. To do so we
require a strong invariance principle for
independent and identically distributed random variables.

\begin{thm}[\citet{koml:majo:tusn:1975}]
  \label{thm:kmt}
   Let $B(n)$ be a 1-dimensional
        standard Brownian motion. If $X_1, X_2, X_3 \dots$ are
        independent and identically distributed univariate random
        variables with mean $\mu$ and standard deviation $\sigma$,
        such that $\E\left[e^{|tX_1|}\right] < \infty$ in a neighborhood of $t =
        0$, then as $n \to \infty$
  \begin{equation*}
     \sum_{i=1}^{n} X_i - n\mu - \sigma B(n) = O(\log
                n) \; .
   \end{equation*} 
\end{thm}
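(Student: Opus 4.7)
This is the celebrated KMT strong approximation, so the plan is to mimic the original Hungarian dyadic construction. Without loss of generality take $\mu=0$ and $\sigma=1$ (subtract and rescale), and handle sample sizes of the form $n=2^N$ first; the extension to general $n$ follows by controlling the maximum step size between consecutive dyadic times, since the partial sums cannot fluctuate by more than $O(\log n)$ over intervals of length $O(1)$ under the exponential moment assumption.

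I would construct the coupling of $S_n := \sum_{i=1}^n X_i$ to $B(n)$ recursively on the dyadic tree of indices. On the coarsest level, pair $S_n$ with $B(n)$ by a one-shot quantile coupling of two centered, variance-$n$ distributions. Then, inductively, having already coupled the endpoint values of some dyadic block of length $2m$, couple the midpoint partial sum $S_m$ with $B(m)$ conditional on the two endpoints. For Brownian motion, the conditional law of $B(m)$ given the endpoints is Gaussian with mean equal to the midpoint average and variance $m/2$. For the random walk, the conditional law of the midpoint given $S_{2m}$ is a convolution of two iid conditional distributions, which under the exponential moment hypothesis admits a sharp Edgeworth-type expansion whose leading term is exactly this same Gaussian; couple the two conditional laws by matching quantiles, independently of the earlier couplings.

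The key estimate driving the whole argument is the quantile coupling inequality: letting $F_{m,y}$ denote the conditional CDF of $S_m$ given $S_{2m}=y$ and $\Phi_{m,y}$ the analogous Gaussian CDF, I need
\[
\bigl|F_{m,y}^{-1}(u) - \Phi_{m,y}^{-1}(u)\bigr| \;\leq\; \frac{C}{\sqrt{m}}\bigl(1+|\Phi_{m,y}^{-1}(u)|\bigr)
\]
uniformly in $u \in (0,1)$ and in $y$ in a large-probability set. Granted this bound, summing the per-level coupling error across the $\log_2 n$ dyadic levels (with a union bound over the $2^k$ blocks present at level $k$, whose typical Gaussian quantiles are of size $\sqrt{k}$) telescopes to a total discrepancy of $O(\log n)$.

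The hard part will be establishing the quantile coupling inequality with the required precision uniformly well into the tails of $u$. Plain Berry-Esseen-type estimates are far too crude: the construction forces the per-level error to remain $O(1)$ even in the moderate-deviation regime where $|\Phi_{m,y}^{-1}(u)| \asymp \sqrt{\log n}$. This necessitates a Cram\'er-style saddle-point/conjugate-tilt argument together with precise smoothing estimates on the characteristic function of $X_1$, and it is exactly here that the hypothesis $\E[e^{|tX_1|}]<\infty$ in a neighborhood of $0$ enters sharply; weakening it to only polynomial moments degrades the rate from $\log n$ to $n^{1/q}$, as in later refinements. Once the quantile bound is available, assembling the dyadic recursion and passing to arbitrary $n$ is essentially bookkeeping.
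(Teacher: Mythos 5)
The paper offers no proof of this statement to compare against: Theorem~\ref{thm:kmt} is quoted verbatim from \citet{koml:majo:tusn:1975} and used as an imported black box, its only role being to control $R_n = \sum_{l=1}^{n} U_l^{(i)2}$ in the proof of Lemma~\ref{lemma:dtilde.0}. So the relevant question is whether your sketch would stand on its own as a proof of the KMT theorem. It would not, although the strategy you describe is the correct one: the dyadic refinement with conditional quantile coupling is exactly the Hungarian construction, and you correctly identify where the exponential moment hypothesis enters and why polynomial moments only yield the weaker $n^{1/q}$ rate.

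The genuine gap is that the entire analytic content of the theorem sits inside the quantile coupling inequality you display, and you defer it. That estimate is a Tusn\'ady-type lemma whose proof requires uniform Cram\'er large-deviation expansions for the conditional density of $S_m$ given $S_{2m}$, valid out to deviations of order $\sqrt{m}$; it occupies the bulk of the original paper and of every subsequent exposition, so asserting it and calling the rest ``bookkeeping'' leaves the theorem unproved. The accumulation step is also not mere bookkeeping: the per-level errors depend on the realized block values, and one must track how they propagate down the tree. Two further slips: the conditional law of $S_m$ given $S_{2m}=y$ has density proportional to $f_m(x)\,f_m(y-x)$, which is a product, not ``a convolution of two iid conditional distributions''; and the coupling inequality in its provable form carries a \emph{quadratic} dependence on the Gaussian quantile, $C\bigl(1+|\Phi_{m,y}^{-1}(u)|^2\bigr)/\sqrt{m}$, rather than the linear dependence you wrote --- the linear bound claims more than the Cram\'er expansion delivers, and the error accounting across the $\log_2 n$ levels changes accordingly. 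In short: right roadmap, but the missing lemma \emph{is} the theorem; given that the paper itself treats the result as a citation, deferring to \citet{koml:majo:tusn:1975} is the defensible course, and your text should present itself as an outline rather than a proof.
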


We begin with a technical lemma that will be used in a couple of places in the rest of the proof.
\begin{lemma}
\label{lem:window} 
Let Conditions \ref{cond:lag.window} and \ref{cond:bn} hold. If, as $n \to
\infty$, $b_n n^{-1} \sum_{k=1}^{b_n} k|\Delta_1 w_n(k)| \to 0$, then
\[
\dfrac{b_n}{n} \left(  \sum_{t=1}^{b_n} |\Delta_1 w_n(t)| + 2\ds \sum_{s=1}^{b_n-1} \ds \sum_{t=1}^{b_n-s} |\Delta_1 w_n(s+t)| \right)\to 0 \; .
\]

\end{lemma}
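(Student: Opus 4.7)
The plan is to show both sums on the left are dominated by $\sum_{k=1}^{b_n} k|\Delta_1 w_n(k)|$ and then apply the hypothesis directly. Since no cancellation is available (absolute values appear throughout), the whole argument reduces to combinatorial manipulation of the index sets.

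First I would handle the single sum, which is easy: since each $t$ in the range $1,\ldots,b_n$ satisfies $t\geq 1$, we have
\[
\sum_{t=1}^{b_n} |\Delta_1 w_n(t)| \;\leq\; \sum_{t=1}^{b_n} t\, |\Delta_1 w_n(t)|.
\]

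Next I would attack the double sum by substituting $u = s+t$. For fixed $s\in\{1,\dots,b_n-1\}$, as $t$ runs from $1$ to $b_n-s$ the quantity $u=s+t$ runs from $s+1$ to $b_n$. So
\[
\sum_{s=1}^{b_n-1}\sum_{t=1}^{b_n-s} |\Delta_1 w_n(s+t)| \;=\; \sum_{s=1}^{b_n-1}\sum_{u=s+1}^{b_n} |\Delta_1 w_n(u)|.
\]
Now switch the order of summation: $u$ ranges over $2,\dots,b_n$, and for fixed $u$ the index $s$ ranges over $1,\dots,u-1$, contributing a factor of $u-1$. Thus
\[
\sum_{s=1}^{b_n-1}\sum_{t=1}^{b_n-s} |\Delta_1 w_n(s+t)| \;=\; \sum_{u=2}^{b_n} (u-1)\, |\Delta_1 w_n(u)| \;\leq\; \sum_{u=1}^{b_n} u\, |\Delta_1 w_n(u)|.
\]

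Combining the two bounds gives
\[
\sum_{t=1}^{b_n}|\Delta_1 w_n(t)| + 2\sum_{s=1}^{b_n-1}\sum_{t=1}^{b_n-s}|\Delta_1 w_n(s+t)| \;\leq\; 3\sum_{k=1}^{b_n} k\,|\Delta_1 w_n(k)|,
\]
so multiplying by $b_n/n$ and invoking the hypothesis yields the conclusion. There is no real obstacle here; the only step requiring care is the index bookkeeping in the change of variables $u = s+t$ and verifying that the factor $(u-1)$ coming from counting how many pairs $(s,t)$ map to a given $u$ is indeed absorbed into $u$, which it is.
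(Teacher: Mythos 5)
Your proposal is correct and follows essentially the same route as the paper: the same change of variables $u=s+t$, the same interchange of summation turning the double sum into $\sum_{u=2}^{b_n}(u-1)|\Delta_1 w_n(u)|$, and the same domination by $\sum_k k|\Delta_1 w_n(k)|$. The only cosmetic difference is that you bound the two pieces separately and end with the constant $3$, whereas the paper combines $1+2(k-1)\leq 2k$ to get the constant $2$; this is immaterial for the limit.
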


\begin{proof}
\begin{align*}
& \dfrac{b_n}{n} \left(  \sum_{t=1}^{b_n} |\Delta_1 w_n(t)| + 2\ds \sum_{s=1}^{b_n-1} \ds \sum_{t=1}^{b_n-s} |\Delta_1 w_n(s+t)| \right) \\
& =  \dfrac{b_n}{n} \left(  \sum_{t=1}^{b_n} |\Delta_1 w_n(t)| + 2\ds \sum_{s=1}^{b_n-1} \ds \sum_{k=s+1}^{b_n} |\Delta_1 w_n(k)| \right)\\
& =  \dfrac{b_n}{n} \left(  \sum_{t=1}^{b_n} |\Delta_1 w_n(t)| + 2\ds \sum_{k=2}^{b_n} \ds \sum_{s=1}^{k-1} |\Delta_1 w_n(k)| \right)\\
& = \dfrac{b_n}{n} \left(  \sum_{t=1}^{b_n} |\Delta_1 w_n(t)| + 2\ds \sum_{k=2}^{b_n} (k-1) |\Delta_1 w_n(k)| \right)\\
& = \dfrac{b_n}{n} \left(  \sum_{t=1}^{b_n} |\Delta_1 w_n(t)| + 2\ds \sum_{k=1}^{b_n} (k-1) |\Delta_1 w_n(k)| \right)\\
& \leq \dfrac{b_n}{n} \left( 2\ds \sum_{k=1}^{b_n} k |\Delta_1 w_n(k)| \right)\\
& \to 0 \text{ by assumption}.
\end{align*}
\end{proof}

\begin{lemma}
\label{lemma:dtilde.0}
Let Conditions \ref{cond:lag.window} and \ref{cond:bn} hold and let $n > 2b_n$.  If $b_n n^{-1} \sum_{k=1}^{b_n} k|\Delta_1 w_n(k)| \to 0$  and  $b_n^{-1} \log n = O(1)$ as $n \to \infty$, then  $\tilde{d}_n \to 0$ with probability 1 as $n \to \infty$.
\end{lemma}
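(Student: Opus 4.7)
The plan is to handle each $(i,j)$ entry of $\tilde{d}_n$ by substituting $T_l = U_l - \bar{B}_n$ and bounding every partial sum of Brownian increments uniformly in its starting point, its length (bounded by $b_n$), and its lag offset $s \leq b_n$. Concretely, I would first establish
\[
\sup \Bigl|\sum_{l=a+1}^{a+m} T_l^{(i)} T_{l+s}^{(j)}\Bigr| = O(b_n) \quad \text{a.s.,}
\]
where the supremum runs over valid triples $(a,m,s)$ with $1 \leq m \leq b_n$, $0 \leq s \leq b_n$, $a+m+s \leq n$, and over $i,j \in \{1,\ldots,p\}$, and then plug this uniform bound into each inner sum appearing in \eqref{eq:dtilde_n}.

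To obtain the uniform bound, expand $T_l T_{l+s}^T = U_l U_{l+s}^T - U_l \bar{B}_n^T - \bar{B}_n U_{l+s}^T + \bar{B}_n \bar{B}_n^T$. For the purely quadratic piece, Cauchy--Schwarz reduces the task to controlling $\sum_{l=a+1}^{a+m}(U_l^{(i)})^2$ uniformly in $(a,m)$. Since $\xi_l := (U_l^{(i)})^2 - 1$ is i.i.d.\ mean zero with a moment generating function in a neighborhood of the origin, Theorem~\ref{thm:kmt} supplies a coupling $\sum_{l=1}^{N}\xi_l = \sigma \hat{B}(N) + O(\log N)$ to a standard Brownian motion $\hat{B}$; differencing at $N = a+m$ and $N = a$ and applying Lemma~\ref{bn.diff.bound} to bound the fluctuations of $\hat{B}$ over windows of length $b_n$ yields $|\sum_{l=a+1}^{a+m}\xi_l| = O(\sqrt{b_n \log n}) + O(\log n)$. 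The hypothesis $b_n^{-1}\log n = O(1)$ forces both error terms to be $O(b_n)$, so $\sum_{l=a+1}^{a+m}(U_l^{(i)})^2 \leq m + O(b_n) = O(b_n)$ uniformly, and Cauchy--Schwarz then gives $|\sum U_l^{(i)} U_{l+s}^{(j)}| = O(b_n)$. The cross and pure-$\bar{B}_n$ contributions are negligible: $|\bar{B}_n^{(i)}| = O(\sqrt{\log\log n / n})$ by \eqref{n.bound}, and $|B^{(i)}(a+m) - B^{(i)}(a)| = O(\sqrt{b_n \log n}) = O(b_n)$ by Lemma~\ref{bn.diff.bound}, so each such contribution is $o(b_n)$.

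Inserting the uniform $O(b_n)$ bound into every inner double-sum appearing in \eqref{eq:dtilde_n}, each $(i,j)$ entry of $\tilde{d}_n$ is bounded up to an absolute constant by
\[
\frac{b_n}{n}\Bigl(\sum_{t=1}^{b_n}|\Delta_1 w_n(t)| + 2\sum_{s=1}^{b_n-1}\sum_{t=1}^{b_n-s}|\Delta_1 w_n(s+t)|\Bigr),
\]
which tends to zero by Lemma~\ref{lem:window} under the standing hypothesis $b_n n^{-1}\sum_{k=1}^{b_n} k|\Delta_1 w_n(k)|\to 0$. This delivers $\tilde{d}_n \to 0$ almost surely as $n \to \infty$.

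The main obstacle is the uniform $O(b_n)$ control of the quadratic partial sums across all starting points $a$, offsets $s$, and coordinate pairs $(i,j)$: a pointwise bound of the form $|T_l^{(i)} T_{l+s}^{(j)}| = O(\log n)$ would inflate each inner sum to $O(b_n \log n) = O(b_n^2)$ and fail to close the argument. Trading this pointwise estimate for a Cauchy--Schwarz estimate, controlled via the KMT coupling of squared Brownian increments together with the condition $b_n^{-1}\log n = O(1)$, is the essential ingredient that makes the bound match exactly what Lemma~\ref{lem:window} can absorb.
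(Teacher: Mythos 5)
Your proposal is correct and follows essentially the same route as the paper: both reduce the bilinear inner sums to quadratic sums of the increments (you via Cauchy--Schwarz, the paper via $|ab|\le(a^2+b^2)/2$ over the fixed first and last $2b_n$ indices), both control the resulting $\chi^2$ partial sums by the KMT coupling of Theorem~\ref{thm:kmt} together with the Brownian increment bounds of Lemma~\ref{bn.diff.bound} and the hypothesis $b_n^{-1}\log n = O(1)$, and both close with Lemma~\ref{lem:window}. The only difference is organizational: you prove a bound uniform over all windows of length at most $b_n$, whereas the paper exploits that the inner sums live only in the first and last $2b_n$ indices (using the plain SLLN for the initial block and reserving KMT for the terminal one).
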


\begin{proof}
For $i, j = 1, \dots, p$, we will show that as $n\to \infty$ with probability 1, $\tilde{d}_{n,ij} \to 0$. Recall
\begin{align*}
\tilde{d}_{n,ij} & = \dfrac{1}{n}  \ds \sum_{t=1}^{b_n} \Delta_1 w_n(t) \left(\ds \sum_{l=1}^{t-1} T^{(i)}_{l}T^{(j)}_{l}  + \ds \sum_{l=n-b_n+t+1}^{n} T^{(i)}_{l}T^{(j)}_{l}  \right) \\
& \quad +  \dfrac{1}{n} \ds \sum_{s=1}^{b_n-1} \ds\sum_{t=1}^{b_n-s} \Delta_1 w_n(s+t)  \left[ \ds \sum_{l=1}^{t-1} (T^{(i)}_{l}T^{(j)}_{l+s} + T^{(i)}_{l+s} T^{(j)}_l) + \ds \sum_{l=n-b_n+t+1}^{n-s}(T^{(i)}_{l}T^{(j)}_{l+s} + T^{(i)}_{l+s} T^{(j)}_l) \right]\\
|\tilde{d}_{n,ij}| & \leq \dfrac{1}{n}  \ds \sum_{t=1}^{b_n} |\Delta_1 w_n(t)| \left(\ds \sum_{l=1}^{t-1} \left|T^{(i)}_{l}T^{(j)}_{l}\right|  + \ds \sum_{l=n-b_n+t+1}^{n} \left|T^{(i)}_{l}T^{(j)}_{l}\right|  \right) \\
& \quad +  \dfrac{1}{n} \ds \sum_{s=1}^{b_n-1} \ds\sum_{t=1}^{b_n-s} |\Delta_1 w_n(s+t)| \, \,\times\\
&  \quad \times \left[ \ds \sum_{l=1}^{t-1} \left(\left|T^{(i)}_{l}T^{(j)}_{l+s}\right| + \left|T^{(i)}_{l+s} T^{(j)}_l \right| \right) + \ds \sum_{l=n-b_n+t+1}^{n-s} \left( \left| T^{(i)}_{l}T^{(j)}_{l+s} \right| + \left|T^{(i)}_{l+s} T^{(j)}_l \right| \right) \right], \numberthis \label{eq:abs.dn.tilde}
\end{align*}
where we use the convention that empty sums are zero. Using the inequality $|ab| \leq (a^2 + b^2)/2$ in the first and second terms in \eqref{eq:abs.dn.tilde}, we have for $t = 1, \dots, b_n$ 
\[ \ds \sum_{l=1}^{t-1} \left| T^{(i)}_{l}T^{(j)}_{l} \right| \leq \dfrac{1}{2}\ds \sum_{l=1}^{t-1} \left(T^{(i)2}_{l} +  T^{(j)2}_{l}  \right) \leq \dfrac{1}{2}\ds \sum_{l=1}^{2b_n} T^{(i)2}_{l}  + \dfrac{1}{2}\ds \sum_{l=1}^{2b_n} T^{(j)2}_{l}\]
\[ \ds \sum_{l=n-b_n+t+1}^{n} \left| T^{(i)}_{l}T^{(j)}_{l} \right| \leq \dfrac{1}{2}\ds \sum_{l=n-b_n+t+1}^{n} \left(T^{(i)2}_{l} +  T^{(j)2}_{l}  \right) \leq \dfrac{1}{2}\ds \sum_{l=n-2b_n+1}^{n} T^{(i)2}_{l}  + \dfrac{1}{2}\ds \sum_{l=n-2b_n+1}^{n} T^{(j)2}_{l}.\]

Similarly, for the third and fourth terms in \eqref{eq:abs.dn.tilde}, for $t = 1, \dots b_n - 1$ and $s = 1, \dots, b_n - 1$
\begin{align*}
 \ds \sum_{l=1}^{t-1} \left| T^{(i)}_{l}T^{(j)}_{l+s} \right| +  \ds \sum_{l=1}^{t-1} \left| T^{(i)}_{l+s}T^{(j)}_{l} \right|&  \leq \dfrac{1}{2}\ds \sum_{l=1}^{2b_n} T^{(i)2}_{l} + \dfrac{1}{2}\ds \sum_{l=1}^{2b_n} T^{(j)2}_{l} + \dfrac{1}{2}\ds \sum_{l=1}^{b_n} T^{(j)2}_{l+s} + \dfrac{1}{2}\ds \sum_{l=1}^{b_n} T^{(i)2}_{l+s}\\
 & \leq \dfrac{1}{2}\ds \sum_{l=1}^{2b_n} T^{(i)2}_{l} + \dfrac{1}{2}\ds \sum_{l=1}^{2b_n} T^{(j)2}_{l} + \dfrac{1}{2}\ds \sum_{l=1}^{2b_n} T^{(j)2}_{l} + \dfrac{1}{2}\ds \sum_{l=1}^{2b_n} T^{(i)2}_{l}
\\
& = \sum_{l=1}^{2b_n} T^{(i)2}_{l} + \ds \sum_{l=1}^{2b_n} T^{(j)2}_{l}.
\end{align*}
\begin{align*}
& \sum_{l=n-b_n+t+1}^{n} \left| T^{(i)}_{l}T^{(j)}_{l+s} \right| + \sum_{l=n-b_n+t+1}^{n} \left| T^{(i)}_{l+s}T^{(j)}_{l} \right| \\
&\leq  \dfrac{1}{2}\ds \sum_{l=n-2b_n+1}^{n} (T^{(i)2}_{l} +  T^{(j)2}_{l})  + \dfrac{1}{2}\ds \sum_{l=n-b_n+1}^{n} (T^{(j)2}_{l+s} + T^{(i)2}_{l+s})\\
& \leq \dfrac{1}{2}\ds \sum_{l=n-2b_n+1}^{n} (T^{(i)2}_{l} +  T^{(j)2}_{l})  + \dfrac{1}{2}\ds \sum_{l=n-2b_n+1}^{n} (T^{(j)2}_{l} + T^{(i)2}_{l})\\
& = \ds \sum_{l=n-2b_n+1}^{n} T^{(i)2}_{l} + \ds \sum_{l=n-2b_n+1}^{n} T^{(j)2}_{l}.
 \end{align*}
Combining the above results in \eqref{eq:abs.dn.tilde} we get, 
\begin{align*}
|\tilde{d}_{n,ij}|  & \leq \dfrac{1}{n}  \left(\dfrac{1}{2}\ds \sum_{l=1}^{2b_n} T^{(i)2}_{l}  + \dfrac{1}{2}\ds \sum_{l=1}^{2b_n} T^{(j)2}_{l} +  \dfrac{1}{2}\ds \sum_{l=n-2b_n+1}^{n} T^{(i)2}_{l}  + \dfrac{1}{2}\ds \sum_{l=n-2b_n+1}^{n} T^{(j)2}_{l}\right) \ds \sum_{t=1}^{b_n} |\Delta_1 w_n(t)| \\
& + \dfrac{1}{n} \ds \sum_{s=1}^{b_n-1} \left[ \left(  \ds \sum_{l=1}^{2b_n} T^{(i)2}_{l}  + \ds \sum_{l=1}^{2b_n} T^{(j)2}_{l} + \ds \sum_{l=n-2b_n+1}^{n} T^{(i)2}_{l}  + \ds \sum_{l=n-2b_n+1}^{n} T^{(j)2}_{l} \right) \ds\sum_{t=1}^{b_n-s} |\Delta_1 w_n(s+t)| \right]\\
& = \dfrac{1}{b_n} \left(  \dfrac{1}{2}\ds \sum_{l=1}^{2b_n} T^{(i)2}_{l}  + \dfrac{1}{2}\ds \sum_{l=1}^{2b_n} T^{(j)2}_{l} + \dfrac{1}{2}\ds \sum_{l=n-2b_n+1}^{n} T^{(i)2}_{l}  + \dfrac{1}{2}\ds \sum_{l=n-2b_n+1}^{n} T^{(j)2}_{l} \right) \times\\
 & \dfrac{b_n}{n} \left(  \sum_{t=1}^{b_n} |\Delta_1 w_n(t)| + 2\ds \sum_{s=1}^{b_n-1} \ds \sum_{t=1}^{b_n-s} |\Delta_1 w_n(s+t)| \right). \numberthis \label{eq:bounded_t}
\end{align*}

We will show that the first term in the product in \eqref{eq:bounded_t} remains bounded with probability 1 as $n\to \infty$. Consider,
\begin{align*}
  \dfrac{1}{2b_n} \ds \sum_{l=1}^{2b_n} T_l^{(i)2} & = \dfrac{1}{2b_n} \ds \sum_{l=1}^{2b_n} \left(U_l^{(i)}  - \bar{B}^{(i)}_n \right)^2\\
  & = \dfrac{1}{2b_n} \ds \sum_{l=1}^{2b_n} U_l^{(i)2} - 2 \bar{B}^{(i)}_n \dfrac{1}{2b_n} \ds \sum_{l=1}^{2b_n} U_l^{(i)} + \left(\bar{B}^{(i)}_n \right)^2\\
  & \leq \left|\dfrac{1}{2b_n} \ds \sum_{l=1}^{2b_n} U_l^{(i)2}  \right| + 2 \left|\bar{B}_n^{(i)}\right| \left|\dfrac{1}{2b_n} \ds \sum_{l=1}^{2b_n} U_l^{(i)}  \right| + \left| \bar{B}_n^{(i)}\right|^2 \\
  & < \left|\dfrac{1}{2b_n} \ds \sum_{l=1}^{2b_n} U_l^{(i)2}  \right| + \left|\dfrac{1}{2b_n} \ds \sum_{l=1}^{2b_n} U_l^{(i)}  \right| \left(\dfrac{2}{n} (1+\epsilon) (2n\log \log n)^{1/2} \right)\\
& \quad  + \left(\dfrac{1}{n} (1+\epsilon) (2n \log \log n)^{1/2}  \right)^2 \quad \quad \text{by Lemma \ref{bn.diff.bound}} \\
  & < \left|\dfrac{1}{2b_n} \ds \sum_{l=1}^{2b_n} U_l^{(i)2}  \right| + \left|\dfrac{1}{2b_n} \ds \sum_{l=1}^{2b_n} U_l^{(i)}  \right| O((n^{-1} \log n)^{1/2}) + O(n^{-1} \log n)\; .
\end{align*}
Since $U_l^{(i)}$ are Brownian motion increments, $U^{(i)}_l \overset{iid}{\sim} N(0,1)$ and by the classical strong law of large numbers, the above remains bounded with probability 1.  Similarly $(2b_n)^{-1} \sum_{l=1}^{2b_n} T_l^{(j)2}$ remains bounded with probability 1 as $n \to \infty$. Next, consider $R_n = \sum_{l=1}^{n} U_l^{(i)2}$. Since $U_l^{(i)} \sim N(0,1)$, $R_n \sim \chi^2_n$. Thus $R_{n}$ has a moment generating function and an application of Theorem \ref{thm:kmt} implies there exists a finite random variable $C_R$ such that, for sufficiently large $n$,
\begin{equation}
\label{eq:rn_kmt}
  \left|R_n - n - 2B^{(i)}(n)\right| < C_R \log n \; . 
\end{equation}
Consider 
\begin{align*}
 \left|R_n - R_{n-2b_n}  \right| &= \Big| \left(R_n - n - 2B^{(i)}(n) \right) - \left(R_{n - 2b_n} - (n - 2b_n) - 2B^{(i)}(n-2b_n) \right)\\
   & \quad -(n-2b_n) + n + 2B^{(i)}(n) - 2B^{(i)} (n - 2b_n)    \Big|\\
   & \leq \left| \left(R_n - n - 2B^{(i)}(n) \right) \right| +  \left|\left(R_{n - 2b_n} - (n - 2b_n) - 2B^{(i)}(n-2b_n) \right) \right| \\
& \quad + \left| 2b_n + 2B^{(i)}(n) - 2B^{(i)} (n - 2b_n) \right|\\
   & < C_R \log{n} + C_R \log{(n - b_n)} + 2b_n \\
& \quad + 2(1+\epsilon) \left( 2(2b_n) \left(\log{\dfrac{n}{2b_n}} + \log \log n \right)  \right)^{1/2} \quad\text{by \eqref{eq:rn_kmt} and Lemma \ref{bn.diff.bound}}\\
   & < 2C_R\log n + 2b_n + 4(1+\epsilon) (2b_n \log n)^{1/2}. \numberthis \label{eq:rn_diff}
\end{align*}

Finally, 
\begin{align*}
& \frac{1}{2b_n} \sum_{l = n - 2b_n +1}^{n} T_l^{(i)2}\\
  & = \dfrac{1}{2b_n} \ds \sum_{l = n-2b_n + 1}^{n} \left(U_l^{(i)} - \bar{B}^{(i)}_n  \right)^2\\
  & = \dfrac{1}{2b_n} \ds \sum_{l = n-2b_n +1}^{n} U_l^{(i)2} - \dfrac{2 \bar{B}^{(i)}_n}{2b_n} \ds \sum_{l = n-2b_n +1}^{n} U_l^{(i)}  + \left(\bar{B}^{(i)}_n  \right)^2\\
  & = \dfrac{1}{2b_n}(R_n - R_{n-2b_n}) - \dfrac{2}{n}B^{(i)}(n) \dfrac{1}{2b_n} \left(B^{(i)}(n) - B^{(i)}(n-2b_n)  \right) + \left(\dfrac{1}{n} B^{(i)}(n) \right)^2\\
  & < \dfrac{1}{2b_n} |R_n - R_{n-2b_n}| + \dfrac{2}{n}\left|B^{(i)}(n) \right|  \dfrac{1}{2b_n} \left|  B^{(i)}(n) - B^{(i)}(n-2b_n)  \right| + \left(\dfrac{1}{n} \left|B^{(i)}(n)  \right|\right)^2\\
  & < \dfrac{1}{2b_n} \left(2C_R\log n + 2b_n + 4(1+\epsilon) (2b_n \log n)^{1/2}  \right) \\
  & \quad  + \dfrac{2}{n} (1+ \epsilon) (2n \log \log n)^{1/2} \dfrac{1}{2b_n} (1+\epsilon) \left(2 (2b_n) \left( \log{\dfrac{n}{2b_n}} + \log \log n \right)  \right)^{1/2} \\
& \quad + \left(\dfrac{1}{n} (1+ \epsilon) (2n \log \log n)^{1/2} \right)^2 \quad \quad \text{by \eqref{eq:rn_diff} and Lemma \ref{bn.diff.bound}}\\
  & < C_Rb_n^{-1} \log n  + 1 + \dfrac{4(1 + \epsilon)(2b_n \log n)^{1/2}}{2b_n} + \dfrac{1}{nb_n}(1 + \epsilon)^2 (2n \log n)^{1/2} (8b_n \log n)^{1/2} \\
  & \quad + \dfrac{(1 + \epsilon)^2}{n} (2 \log n)\\
  & < C_R b_n^{-1} \log n + 1 + 2(1+\epsilon) (2b_n^{-1} \log n)^{1/2} + 4 (1+\epsilon)^2 \left(\dfrac{\log n}{n} \right)^{1/2} \left( b_n^{-1} \log n \right)^{1/2} + 2(1+\epsilon)^2 \dfrac{\log n}{n}.
\end{align*}
Since $b_n^{-1} \log n = O(1)$ as $n \to \infty$, the above term remains bounded with probability 1 as $n \to \infty$. Similarly, $(2b_n)^{-1} \sum_{l = n-2b_n+1}^{n} T_l^{(j)2} $ remains bounded with probability 1 as $n \to \infty$. The second term in the product in \eqref{eq:bounded_t} converges to 0 by Lemma~\ref{lem:window} and hence $\tilde{d}_{n,ij} \to 0$ with probability 1 as $n \to \infty$.
\end{proof}

Recall that $h(X_t) = Y_t^2$ for $t = 1,2, 3, \dots $, where the square is element-wise.
\begin{lemma}
\label{lemma:bounded.tails}

Let a strong invariance principle for $h$ hold as in \eqref{eq:multi_sip_h}. If Condition \ref{cond:bn} holds,  $b_n^{-1} \psi_h(n) \to  0$ and $b_n^{-1} \log n = O(1)$  as $n \to \infty$, then 
\[\dfrac{1}{b_n} \ds \sum_{k=1}^{b_n} h(X_k) \text{ and } \dfrac{1}{b_n} \ds \sum_{k = n-b_n +1}^{n} h(X_k),
\]
stay bounded with probability 1 as $n \to \infty$. 
\end{lemma}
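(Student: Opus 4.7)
The plan is to use the strong invariance principle \eqref{eq:multi_sip_h} to replace each partial sum of $h$ by an affine function of Brownian motion up to a negligible error, and then apply Lemma \ref{bn.diff.bound} to control the resulting Brownian motion terms.

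For the initial block, I would apply \eqref{eq:multi_sip_h} with $n$ replaced by the integer $b_n$, obtaining with probability 1, for all $n$ sufficiently large,
\[
\frac{1}{b_n}\sum_{k=1}^{b_n} h(X_k) \;=\; \theta_h \;+\; \frac{1}{b_n} L_h B(b_n) \;+\; \frac{R_n}{b_n}, \qquad \|R_n\| < D_h\, \psi_h(b_n).
\]
Since $\psi_h$ is increasing and $b_n \le n$, the remainder is at most $D_h\, \psi_h(n)/b_n \to 0$ by hypothesis; the vector $\theta_h$ is finite by assumption; and each coordinate of $L_h B(b_n)/b_n$ is a linear combination of $B^{(j)}(b_n)/b_n$, which is $O(\sqrt{b_n^{-1}\log\log b_n})\to 0$ by the third bound in Lemma \ref{bn.diff.bound} applied at $b_n$ (equivalently, by the LIL). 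So this block is in fact convergent to $\theta_h$ and a fortiori bounded.

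For the tail block I would telescope,
\[
\sum_{k=n-b_n+1}^{n} h(X_k) = \sum_{k=1}^{n} h(X_k) - \sum_{k=1}^{n-b_n} h(X_k),
\]
and apply \eqref{eq:multi_sip_h} at both $n$ and $n-b_n$ to get
\[
\sum_{k=n-b_n+1}^{n} h(X_k) = b_n\, \theta_h + L_h\bigl(B(n) - B(n-b_n)\bigr) + R_n',
\]
with $\|R_n'\| \le 2\, D_h\, \psi_h(n)$. Dividing by $b_n$, the $\theta_h$ term is constant, the remainder is $O(\psi_h(n)/b_n) \to 0$, and the Brownian increment is controlled coordinatewise by the second inequality in Lemma \ref{bn.diff.bound}:
\[
\frac{|B^{(j)}(n) - B^{(j)}(n-b_n)|}{b_n} < (1+\epsilon)\sqrt{\frac{2\bigl(\log(n/b_n)+\log\log n\bigr)}{b_n}} = O\!\left(\sqrt{b_n^{-1}\log n}\right),
\]
which is $O(1)$ thanks to the hypothesis $b_n^{-1}\log n = O(1)$.

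The only delicate point, and the reason the lemma asserts merely boundedness (not convergence) of the tail average, is this last step: a Brownian window of length $b_n$ at time $n$ spreads by order $\sqrt{b_n\log(n/b_n)}$, and dividing by $b_n$ leaves the $O(1)$ quantity $\sqrt{b_n^{-1}\log n}$. The condition $b_n^{-1}\log n = O(1)$ is thus exactly what is needed; all other error contributions are genuinely $o(1)$. Once this bound is in hand, collecting the three pieces in each block and passing to Euclidean norm (a sum of bounded components is bounded) completes the proof.
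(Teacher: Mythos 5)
Your proposal is correct and follows essentially the same route as the paper's proof: the initial block is handled by applying the SIP at $b_n$ together with $\psi_h(b_n)/b_n \le \psi_h(n)/b_n \to 0$, and the tail block is telescoped as $\sum_{k=1}^{n} h(X_k) - \sum_{k=1}^{n-b_n} h(X_k)$ with the Brownian increment controlled by Lemma \ref{bn.diff.bound} and the hypothesis $b_n^{-1}\log n = O(1)$. The only difference is presentational: your remark that the $O(\sqrt{b_n^{-1}\log n})$ Brownian window term is the sole reason the tail average is merely bounded rather than convergent is a nice observation the paper leaves implicit.
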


\begin{proof}
Equation \eqref{eq:multi_sip_h} implies that $b_n^{-1} \sum_{k=1}^{b_n}
h(X_k) \to \E_F h$ if $b_n^{-1}\psi_h(b_n) \to 0$ as $n \to
\infty$. Since by assumption $b_n ^{-1} \psi_h(n) \to 0$ as $n \to
\infty$ and $\psi_h$ is increasing, $ b_n^{-1} \sum_{k=1}^{b_n}
h(X_k)$ remains bounded w.p. 1 as $n \to \infty$. Next, for all
$\epsilon > 0$ and sufficiently large $n(\epsilon)$,
\begin{align*}
&\dfrac{1}{b_n} \left \| \ds \sum_{k = n -b_n+1}^{n} h(X_k)  \right\|  \\
 = & \dfrac{1}{b_n} \left \| \ds \sum_{k = 1}^{n} h(X_k)    - \ds \sum_{k = 1}^{n - b_n } h(X_k) \right\|\\
 = & \dfrac{1}{b_n} \Bigg\| \ds \sum_{k=1}^{n} h(X_k) - n\E_{F} h + (n-b_n)\E_{F} h + b_n\E_{F}h  - L_h B(n) +  L_h B(n-b_n)\\
   & + L_h(B(n) - B(n-b_n)) - \ds \sum_{k=1}^{n - b_n} h(X_k) \Bigg\| \\
 \leq & \dfrac{1}{b_n} \left \| \ds \sum_{k=1}^{n} h(X_k) - n\E_{F}h - L_hB(n)  \right\| + \dfrac{1}{b_n} \left \|\ds \sum_{k=1}^{n-b_n} h(X_k) - (n-b_n)\E_{F}h - L_h B(n-b_n)   \right\| \\
 & + \dfrac{1}{b_n} \left \| L_h (B(n) - B(n-b_n))  + b_n \E_{F}h \right\|\\
 < & \dfrac{1}{b_n} D_h \psi_h(n) + \dfrac{1}{b_n}D_h \psi_h(n-b_n)+ \dfrac{1}{b_n} \left \| L_h (B(n) - B(n-b_n))\right\|   + \left \| \E_{F}h \right\|\quad \quad \text{by \eqref{eq:multi_sip_h}}\\
\leq & \dfrac{1}{b_n} D_h \psi_h(n) + \dfrac{1}{b_n}D_h \psi_h(n-b_n)+ \dfrac{1}{b_n} \|L_h\|  \left(\ds \sum_{i=1}^{p} \left|B^{(i)}(n) - B^{(i)}(n-b_n) \right|^2 \right)^{1/2} + \|\E_Fh\|\\
\leq & \dfrac{1}{b_n} D_h \psi_h(n) + \dfrac{1}{b_n}D_h \psi_h(n-b_n)+ \dfrac{1}{b_n} \|L_h\|  \left(\ds \sum_{i=1}^{p} \sup_{0 \leq s \leq b_n} \left|B^{(i)}(n) - B^{(i)}(n-s) \right|^2 \right)^{1/2} + \|\E_Fh\| \\
< & \dfrac{2}{b_n} D_h\psi_h(n) + \dfrac{p^{1/2}}{b_n} \|L_h\|  (1+\epsilon) \left(2b_n \left(\log{\dfrac{n}{b_n}} + \log{\log n}  \right) \right) ^{1/2} + \left \| \E_{F}h \right\| \quad \quad \text{by Lemma \ref{bn.diff.bound}}\\
< & \left \|\E_{F}h \right\| + \dfrac{2}{b_n} D_h \psi_h(n) + O((b_n^{-1} \log n)^{1/2} ).
\end{align*} 
Thus by the assumptions $b_n^{-1} \left \|\sum_{k= n-b_n+1}^{n} h(X_k)  \right\|$ stays bounded w.p. 1 as $n \to \infty$. 
\end{proof}

\begin{lemma}
\label{lemma:dn.0}
Suppose the strong invariance principles \eqref{eq:multi_sip} and \eqref{eq:multi_sip_h} hold. In addition,  suppose Conditions \ref{cond:lag.window} and \ref{cond:bn} hold and $n > 2b_n$, $b_n n^{-1} \sum_{k=1}^{b_n} k|\Delta_1 w_n(k)| \to 0$, $b_n^{-1} \psi(n) \to 0$, $b_n^{-1}\psi_h(n) \to 0$ as $n\to \infty$ and $b_n^{-1} \log n = O(1)$. Then,  $d_n \to 0$ with probability 1 as $n \to \infty$.
\end{lemma}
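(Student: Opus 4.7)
The plan is to mirror, essentially verbatim in structure, the argument used for $\tilde d_n$ in Lemma~\ref{lemma:dtilde.0}, replacing the Brownian motion increments $T_t$ by the centered process $Z_t=Y_t-\bar Y_n$. Concretely, I would fix $i,j\in\{1,\dots,p\}$, take absolute values inside \eqref{eq:dn}, and use the pointwise bound $|ab|\le\tfrac12(a^{2}+b^{2})$ on every factor $Z^{(i)}_l Z^{(j)}_{l+s}$ appearing in the four inner sums. After extending the index ranges of the resulting one-variable sums to $\{1,\dots,2b_n\}$ and $\{n-2b_n+1,\dots,n\}$ respectively (this is where $n>2b_n$ is used), I obtain a bound of the form
\[
|d_{n,ij}|\;\le\;\frac{1}{b_n}\Bigl(\tfrac12\!\!\sum_{l=1}^{2b_n}\!\bigl(Z^{(i)2}_l+Z^{(j)2}_l\bigr)+\tfrac12\!\!\sum_{l=n-2b_n+1}^{n}\!\bigl(Z^{(i)2}_l+Z^{(j)2}_l\bigr)\Bigr)\times\frac{b_n}{n}\Bigl(\sum_{t=1}^{b_n}|\Delta_1 w_n(t)|+2\sum_{s=1}^{b_n-1}\sum_{t=1}^{b_n-s}|\Delta_1 w_n(s+t)|\Bigr).
\]
The second factor tends to $0$ by Lemma~\ref{lem:window} under the hypothesis $b_nn^{-1}\sum_{k=1}^{b_n}k|\Delta_1w_n(k)|\to 0$, exactly as in the Brownian case.

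The substance of the proof is therefore to show that the first factor stays bounded almost surely as $n\to\infty$. Expanding $Z^{(i)2}_l=(Y^{(i)}_l)^2-2\bar Y^{(i)}_n Y^{(i)}_l+(\bar Y^{(i)}_n)^2$ and observing that $(Y^{(i)}_l)^2$ is exactly the $i$-th coordinate of $h(X_l)$, I would decompose
\[
\frac{1}{2b_n}\sum_{l=1}^{2b_n}Z^{(i)2}_l\;=\;\frac{1}{2b_n}\sum_{l=1}^{2b_n}h^{(i)}(X_l)\;-\;2\bar Y^{(i)}_n\cdot\frac{1}{2b_n}\sum_{l=1}^{2b_n}Y^{(i)}_l\;+\;(\bar Y^{(i)}_n)^{2},
\]
and similarly for the tail block $\{n-2b_n+1,\dots,n\}$. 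The first summand is controlled by Lemma~\ref{lemma:bounded.tails} applied to $h$, which requires exactly $b_n^{-1}\psi_h(n)\to 0$ and $b_n^{-1}\log n=O(1)$ from the hypotheses. The second and third summands involve $\bar Y^{(i)}_n$, which converges to $0$ almost surely: indeed, the SIP \eqref{eq:multi_sip} together with $\psi(n)/\sqrt n\to 0$ (implied by $b_n^{-1}\psi(n)\to 0$ and $b_n/n\to 0$) and Lemma~\ref{bn.diff.bound} applied to $C=LB$ gives $\bar Y^{(i)}_n=O(\sqrt{\log\log n/n})$ a.s., while the companion sum $(2b_n)^{-1}\sum_{l=1}^{2b_n}Y^{(i)}_l$ stays bounded by the same SIP argument used on the tail in Lemma~\ref{lemma:bounded.tails} applied to $g$ instead of $h$. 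An analogous expansion handles the block $\{n-2b_n+1,\dots,n\}$, using the SIP and the Brownian modulus bounds in Lemma~\ref{bn.diff.bound} to control $\sum_{l=n-2b_n+1}^{n}Y^{(i)}_l$ and $\sum_{l=n-2b_n+1}^{n}h^{(i)}(X_l)$ in a manner parallel to the Brownian argument in the proof of Lemma~\ref{lemma:dtilde.0}.

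The main obstacle is this last point: while in Lemma~\ref{lemma:dtilde.0} the a.s.\ boundedness of the squared-tail sum follows from $U_l^{(i)}\overset{iid}{\sim}N(0,1)$ together with the KMT-type bound \eqref{eq:rn_kmt}, here we do not have independence and must substitute the corresponding moment information through the SIP for $h$ (for the squared terms) and the SIP for $g$ together with the Brownian increment bounds (for the linear terms). The bookkeeping is otherwise routine, but the extra index block near $n$ must be handled carefully because $n-2b_n$ might not be a natural cutoff for the SIP estimates; I would therefore insert and subtract partial sums at both $n$ and $n-2b_n$, apply the SIP at both endpoints, and control the difference of the Brownian motions over a window of length $2b_n$ by Lemma~\ref{bn.diff.bound}, exactly as done in the tail computation of Lemma~\ref{lemma:bounded.tails}. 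Combining the two factors yields $|d_{n,ij}|\to 0$ a.s.\ for every $i,j$, and hence $d_n\to 0$ a.s.
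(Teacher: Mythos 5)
Your proposal follows essentially the same route as the paper's proof: the identical product bound on $|d_{n,ij}|$, the same use of Lemma~\ref{lem:window} for the lag-window factor, and the same decomposition of the $Z$-blocks into $h$-sums (handled by Lemma~\ref{lemma:bounded.tails}) plus terms controlled by the SIP for $g$ and Lemma~\ref{bn.diff.bound}, including the insert-and-subtract treatment of the tail block at $n$ and $n-2b_n$. One parenthetical slip: $b_n^{-1}\psi(n)\to 0$ together with $b_n/n\to 0$ yields only $\psi(n)/n\to 0$, not $\psi(n)/\sqrt{n}\to 0$, but the weaker rate is all that is needed to conclude $\bar{Y}^{(i)}_n\to 0$ almost surely, so nothing in the argument breaks.
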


\begin{proof}
For $i,j = 1, \dots,p$, let $d_{n,ij}$ denote the $(i,j)$th element of the matrix $d_n$. We can follow the same steps as in Lemma \ref{lemma:dtilde.0} to obtain
\begin{align*}
|d_{n,ij}| & \leq \dfrac{1}{b_n} \left(  \dfrac{1}{2}\ds \sum_{l=1}^{2b_n} Z^{(i)2}_{l}  + \dfrac{1}{2}\ds \sum_{l=1}^{2b_n} Z^{(j)2}_{l} + \dfrac{1}{2}\ds \sum_{l=n-2b_n+1}^{n} Z^{(i)2}_{l}  + \dfrac{1}{2}\ds \sum_{l=n-2b_n+1}^{n} Z^{(j)2}_{l} \right) \times\\
 & \dfrac{b_n}{n} \left(  \sum_{t=1}^{b_n} |\Delta_1 w_n(t)| + 2\ds \sum_{s=1}^{b_n-1} \ds \sum_{t=1}^{b_n-s} |\Delta_1 w_n(s+t)| \right).
\end{align*}
The second term in the product converges to 0 by Lemma~\ref{lem:window}. It remains to show that the following remains bounded with probability 1 as $n \to \infty$,
\[\dfrac{1}{b_n} \left(  \dfrac{1}{2}\ds \sum_{l=1}^{2b_n} Z^{(i)2}_{l}  + \dfrac{1}{2}\ds \sum_{l=1}^{2b_n} Z^{(j)2}_{l} + \dfrac{1}{2}\ds \sum_{l=n-2b_n+1}^{n} Z^{(i)2}_{l}  + \dfrac{1}{2}\ds \sum_{l=n-2b_n+1}^{n} Z^{(j)2}_{l} \right). \] 
We have,
\begin{align*}
\dfrac{1}{2b_n} \ds \sum_{l=1}^{2b_n} Z_{l}^{(i)2} & = \dfrac{1}{2b_n}\ds \sum_{l=1}^{2b_n} \left( Y_l^{(i)} - \bar{Y}^{(i)}_n\right)^2 = \dfrac{1}{2b_n} \ds \sum_{l=1}^{2b_n} Y_l^{(i)2} - 2 \bar{Y}^{(i)}_{2b_{n}}\bar{Y}^{(i)}_n +  \left(\bar{Y}^{(i)}_n \right)^2.
\end{align*}
By the strong invariance principle for $g$, $\bar{Y}_{n}^{(i)} \to 0$, $\bar{Y}_{2b_n}^{(i)} \to 0$, and $  (\bar{Y}^{(i)}_n)^2 \to 0$ w.p. 1 as $n \to \infty$. By Lemma \ref{lemma:bounded.tails}, $(2b_n)^{-1} \sum_{l=1}^{2b_n} Y_l^{(i)2} $  remains bounded w.p. 1 as $n \to \infty$. Thus  $(2b_n)^{-1} \sum_{l=1}^{2b_n} Z_{l}^{(i)2}$ remains bounded w.p. 1 as $n \to \infty$.  Similarly $ (2b_n)^{-1} \sum_{l=1}^{2b_n} Z_{l}^{(j)2}$ stay bounded w.p. 1 as $n \to \infty$. Now consider
\begin{align*}
\dfrac{1}{2b_n} \ds \sum_{l=n-2b_n+1}^{n} Z_{l}^{(i)2} & = \dfrac{1}{2b_n}\ds \sum_{l=n-2b_n +1}^{n} \left( Y_l^{(i)} - \bar{Y}^{(i)}_n\right)^2\\
& = \dfrac{1}{2b_n} \ds \sum_{l=n-2b_n +1}^{n} Y_l^{(i)2} - 2 \bar{Y}^{(i)}_n \dfrac{1}{2b_n}\ds \sum_{l=n-2b_n +1}^{n} Y_l^{(i)} +  \left(\bar{Y}^{(i)}_n \right)^2. \numberthis \label{eq:z_lsquare}
\end{align*}
We will first show that $(2b_n)^{-1}\sum_{l=n-2b_n+1}^{n} Y_{l}^{(i)}$ remains bounded with probability 1. Let $\Sigma_{ii}$ denote the $i$th diagonal entry of $\Sigma$, then 
\begin{align*}
 &\frac{1}{2b_n}\sum_{l = n-2b_n+1}^{n} Y^{(i)}_l \\
   & = \dfrac{1}{2b_n} \left(\ds \sum_{l=1}^{n} Y_l^{(i)} - \ds \sum_{l=1}^{n-2b_n} Y_l^{(i)} \right)\\
  & = \dfrac{1}{2b_n}\left(\ds \sum_{l=1}^{n} Y_l^{(i)} - \sqrt{\Sigma_{ii}}B^{(i)}(n)  \right) - \dfrac{1}{2b_n}\left(\ds \sum_{l=1}^{n-2b_n} Y_l^{(i)} - \sqrt{\Sigma_{ii}} B^{(i)}(n - 2b_n)  \right)\\
& \quad  + \dfrac{1}{2b_n} \sqrt{\Sigma_{ii}} \left(B^{(i)}(n) - B^{(i)}(n - 2b_n) \right)\\
  & < \dfrac{1}{2b_n} \left|\sum_{l=1}^{n} Y_l^{(i)} - \sqrt{\Sigma_{ii}} B^{(i)}(n)   \right| + \dfrac{1}{2b_n}\left|  \sum_{l=1}^{n-2b_n} Y_l^{(i)} - \sqrt{\Sigma_{ii}} B^{(i)}(n - 2b_n)  \right| \\
  & \quad + \dfrac{1}{2b_n} \left|\sqrt{\Sigma_{ii}} \left(B^{(i)}(n) - B^{(i)}(n - 2b_n) \right)  \right|\\
  & < \dfrac{1}{2b_n}D \psi(n) + \dfrac{1}{2b_n}D \psi(n - 2b_n) + \dfrac{1}{2b_n} \sqrt{\Sigma_{ii}} \sup_{0 \leq s \leq 2b_n} | B^{(i)}(n) - B^{(i)}(n - s)| \quad \text{by \eqref{eq:multi_sip}}\\
  & < \dfrac{2D}{2b_n} \psi(n) + \sqrt{\Sigma_{ii}} \dfrac{1}{2b_n} (1+\epsilon) \left[ 2(2b_n) \left(\log \dfrac{n}{2b_n} + \log \log n \right) \right]^{1/2}  \quad \text{by Lemma \eqref{bn.diff.bound}}\\
  & < Db_n^{-1} \psi(n) + \sqrt{\Sigma_{ii}}(1+\epsilon) (2b_n^{-1} \log n)^{1/2}\\
  & = O(b_n^{-1} \psi(n)) + O((b_n^{-1} \log n)^{1/2}) \; .
\end{align*}
By the strong invariance principle for $g$, $\bar{Y}^{(i)}_n \to 0 $ and $  (\bar{Y}^{(i)}_n)^2 \to 0$ w.p. 1 as $n \to \infty$. By Lemma \ref{lemma:bounded.tails}, $(2b_n)^{-1} \sum_{l=n-2b_n+1}^{n} Y_l^{(i)2} $ remains bounded w.p. 1 as $n \to \infty$. Combining these results in \eqref{eq:z_lsquare}, $(2b_n)^{-1} \sum_{l=n-2b_n+1}^{n} Z_{l}^{(i)2}$ remains bounded w.p. 1 as $n \to \infty$. Similarly $(2b_n)^{-1} \sum_{l=n-2b_n+1}^{n} Z_{l}^{(j)2}$ remains bounded w.p. 1 as $n \to \infty$.
\end{proof}

\begin{lemma}
\label{lemma:avoidborel}
\citep{bill:2008} For a family of random variables $\{X_n: n\geq 1\}$, if $\E(|X_n|) \leq s_n$ where $s_n$ is a sequence such that  $\sum_{n=1}^{\infty} s_n < \infty$, then $X_n \to 0$ w.p. 1 as $n\to \infty$. 
\end{lemma}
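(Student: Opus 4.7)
The plan is to reduce the almost sure convergence statement to the finiteness of $\sum_{n\geq 1}|X_n|$ almost surely, which in turn follows from a Tonelli/monotone-convergence exchange of sum and expectation. This route deliberately sidesteps a direct Borel--Cantelli argument (fitting the label name), although the two proofs are essentially equivalent in spirit.

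Concretely, I would proceed in three short steps. First, define the nonnegative random variable $S := \sum_{n=1}^{\infty} |X_n|$, understood as a (possibly infinite) limit of nondecreasing partial sums. Second, apply the monotone convergence theorem (or equivalently Tonelli for nonnegative integrands on $\N \times \Omega$) to exchange sum and expectation:
\[
\E(S) = \E\!\left(\sum_{n=1}^{\infty} |X_n|\right) = \sum_{n=1}^{\infty} \E(|X_n|) \leq \sum_{n=1}^{\infty} s_n < \infty\, .
\]
Third, since $\E(S) < \infty$, we have $S < \infty$ with probability 1. On the event $\{S<\infty\}$, the tail of a convergent series of nonnegative terms vanishes, so $|X_n| \to 0$, and hence $X_n \to 0$. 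This event has probability 1, which gives the conclusion.

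There is essentially no obstacle here: the only point that requires mild care is justifying the interchange in the second step, which is immediate because the summands $|X_n|$ are nonnegative and measurable, so Tonelli applies without an integrability hypothesis beyond what is being computed. As a sanity check, the alternative Borel--Cantelli route would estimate $\Pr(|X_n| > \epsilon) \leq s_n/\epsilon$ by Markov, sum in $n$ to get a convergent series, apply the first Borel--Cantelli lemma for each $\epsilon = 1/k$, and intersect the resulting full-probability events over $k \in \N$; this would deliver the same conclusion but traverses a slightly longer path, which is presumably why the label suggests avoiding it.
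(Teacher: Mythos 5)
Your proof is correct and complete: the monotone convergence (Tonelli) interchange is valid because the summands are nonnegative, $\E\bigl(\sum_n |X_n|\bigr) \leq \sum_n s_n < \infty$ forces $\sum_n |X_n| < \infty$ almost surely, and the terms of an almost surely convergent nonnegative series tend to zero. The paper itself supplies no proof of this lemma --- it is stated with only a citation to Billingsley --- so there is nothing to compare against; your argument is the standard one, and the label \texttt{avoidborel} is consistent with your observation that the result can be obtained either this way or via Markov's inequality plus the first Borel--Cantelli lemma, both of which you correctly identify as equivalent in substance.
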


\begin{lemma}
\label{whittle}
\citep{whit:1960} Let $R_1, \dots, R_n$ be i.i.d standard normal variables and \\ $A = \sum_{l=1}^{n} \sum_{k=1}^{n} a_{lk}R_lR_k$ where $a_{lk}$ are real coefficients, then for $c\geq 1$  and for some constant $K_c$, we have
\[\E[|A - \E A|^{2c}] \leq K_c \left( \sum_l \sum_k a_{lk}^2\right)^c.\]
\end{lemma}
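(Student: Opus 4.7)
The plan is to reduce the claim to a classical moment inequality for sums of independent centered random variables, exploiting the rotational invariance of the standard Gaussian vector. First I would symmetrize: replacing each $a_{lk}$ by $\tilde a_{lk}=(a_{lk}+a_{kl})/2$ leaves $A$ unchanged while only weakly decreasing $\sum_{l,k} a_{lk}^2$, so we may assume the coefficient matrix $\tilde A=(\tilde a_{lk})$ is symmetric. Since $R=(R_1,\dots,R_n)^T\sim N_n(0,I_n)$ is invariant in distribution under orthogonal transformations, I would diagonalize $\tilde A=Q\Lambda Q^T$ and apply the change of variables $R\mapsto Q^T R$, obtaining in distribution
\[
A - \E A \;=\; \sum_{l=1}^n \lambda_l\, (R_l^2-1),
\]
where $\lambda_1,\dots,\lambda_n$ are the eigenvalues of $\tilde A$. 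This converts $A-\E A$ into a sum of \emph{independent} centered random variables, at which point the multi-index structure of the original quadratic form has been eliminated.

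Next I would invoke a Rosenthal-type inequality: for independent centered random variables $X_l$ with finite $2c$-th moment,
\[
\E\left|\sum_{l=1}^{n} X_l\right|^{2c} \;\leq\; K_c\left(\sum_l \E|X_l|^{2c} \;+\; \left(\sum_l \E X_l^2\right)^c\right).
\]
Taking $X_l=\lambda_l(R_l^2-1)$ gives $\E X_l^2 = 2\lambda_l^2$ and $\E|X_l|^{2c} = C_c\,\lambda_l^{2c}$ for a constant $C_c$ depending only on $c$. The elementary bound $\lambda_l^{2c} \leq \lambda_l^2 \big(\sum_j \lambda_j^2\big)^{c-1}$ yields $\sum_l \lambda_l^{2c} \leq \big(\sum_l \lambda_l^2\big)^c$. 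Combining these with the identity $\sum_l \lambda_l^2 = \mathrm{tr}(\tilde A^2) = \sum_{l,k} \tilde a_{lk}^2 \leq \sum_{l,k} a_{lk}^2$ then produces the claimed bound with a new constant $K_c'$ depending only on $c$.

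The principal obstacle is establishing the Rosenthal-type moment inequality in a genuinely self-contained way at the right level of generality. An alternative, and arguably more elementary route -- closer to Whittle's original argument -- is induction on $c$: after diagonalization, expand $(A-\E A)^{2c}$ and observe that only pairings among the Gaussian factors contribute (Isserlis' formula), then bound the resulting combinatorial sum over pairings by $(\sum_l \lambda_l^2)^c$ times a factor depending only on $c$, closing the induction. Either route yields an explicit finite $K_c$; no effort to optimize the constant is needed for the applications of this lemma in the paper.
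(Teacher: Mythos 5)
Your argument is correct, but it is worth noting that the paper does not prove this lemma at all: it is quoted verbatim from \citet{whit:1960} and used as an external ingredient. Whittle's original theorem is more general --- it bounds the moments of quadratic forms in arbitrary independent variables satisfying moment conditions of the form $\E|X_l|^{2c} \le$ (const.), and his proof proceeds by a combinatorial/inductive expansion precisely because a rotation would destroy independence in the non-Gaussian case. Your route exploits the Gaussian hypothesis directly: symmetrization (which, as you note, leaves $A$ fixed and can only decrease $\sum_{l,k} a_{lk}^2$, by Cauchy--Schwarz applied to $\sum_{l,k} a_{lk}a_{kl}$), orthogonal diagonalization combined with rotational invariance of $N_n(0,I_n)$ to reduce $A - \E A$ to $\sum_l \lambda_l (R_l^2 - 1)$, and then Rosenthal's inequality for $p = 2c \ge 2$ together with $\sum_l \lambda_l^{2c} \le \bigl(\sum_l \lambda_l^2\bigr)^c$ and $\sum_l \lambda_l^2 = \mathrm{tr}(\tilde A^2) \le \sum_{l,k} a_{lk}^2$. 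All of these steps check out, and since the paper only ever applies the lemma to i.i.d.\ standard normal Brownian-motion increments, the loss of generality relative to Whittle is harmless here. What your approach buys is a short, conceptually transparent proof for the case actually needed; what it costs is reliance on Rosenthal's inequality as a black box (itself a nontrivial classical result, though its dependence of the constant on $c$ is immaterial since the lemma permits $K_c$ to depend on $c$) and the inability to recover the full strength of the cited theorem. Your sketched alternative via Isserlis' formula and pairings would close that self-containedness gap for the Gaussian case and is closer in spirit, though not in detail, to Whittle's combinatorial argument.
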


\begin{lemma}
\label{lemma:sigma.tilde_wn_I}
Let Conditions \ref{cond:lag.window} and \ref{cond:bn} hold and assume that
\begin{enumerate}[(a)]
\item there exists a constant $c \geq 1$ such that $\sum_n (b_n/n)^c < \infty$,

\item $b_n n^{-1} \log n \to 0$ as $n \to \infty$,
\end{enumerate}

then $\widetilde{\Sigma}_{S} \to I_p$ w.p. 1 as $n \to \infty$.
\end{lemma}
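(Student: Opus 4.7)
The plan is to prove entrywise convergence by decomposing
\[
\widetilde{\Sigma}_S - I_p = \bigl(\widetilde{\Sigma}_S - \E\widetilde{\Sigma}_S\bigr) + \bigl(\E\widetilde{\Sigma}_S - I_p\bigr),
\]
and handling the bias and the stochastic fluctuation separately. The bias is a direct calculation using the joint independence of the Brownian increments $U^{(i)}_t$ across both the time index $t$ and the coordinate $i$: one finds $\E[U^{(i)}_t \bar{B}^{(j)}_n] = n^{-1}\delta_{ij}$ and $\E[\bar{B}^{(i)}_n \bar{B}^{(j)}_n] = n^{-1}\delta_{ij}$, so that $\E[T^{(i)}_t T^{(j)}_{t+s}]$ equals $(1 - n^{-1})\delta_{ij}$ when $s = 0$ and $-n^{-1}\delta_{ij}$ otherwise. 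Substituting into $\E[\widetilde{\Sigma}_{S,ij}] = \sum_s w_n(s) n^{-1} \sum_{t \in I_s}\E[T^{(i)}_t T^{(j)}_{t+s}]$ and invoking $|w_n(s)| \leq 1$ shows that the $(i,j)$-entry of the bias is bounded in absolute value by $n^{-1} + 2(b_n-1)/n$, which vanishes by Condition \ref{cond:bn}.

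For the fluctuation I would fix an entry $(i,j)$ and write
\[
\widetilde{\Sigma}_{S,ij} = \sum_{l,k=1}^n a_{lk}\, T^{(i)}_l T^{(j)}_k,
\]
where $a_{lk} = n^{-1} w_n(k-l)\mathbf{1}\{|k-l| \leq b_n - 1\}$. Using the identity $T^{(i)}_l = \sum_m (\delta_{ml} - n^{-1}) U^{(i)}_m$, this entry can be rewritten as a quadratic form (when $i=j$) or a bilinear form (when $i \neq j$) in i.i.d.\ standard normals with coefficient matrix $B = CAC$, where $C = I_n - n^{-1}\mathbf{1}\mathbf{1}^T$ is the centering projection. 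Since $C$ is an orthogonal projection and hence a Frobenius-norm contraction, $\|B\|_F \leq \|A\|_F$, and a direct count gives $\|A\|_F^2 = O(b_n/n)$ because $A$ has $O(n b_n)$ nonzero entries each of size $O(n^{-1})$. Whittle's inequality (Lemma \ref{whittle}), applied directly in the diagonal case and after pooling the $2n$ independent normals $\{U^{(i)}_m, U^{(j)}_m\}_{m=1}^n$ into one vector in the off-diagonal case, then yields
\[
\E\bigl[\bigl|\widetilde{\Sigma}_{S,ij} - \E\widetilde{\Sigma}_{S,ij}\bigr|^{2c}\bigr] \leq K_c (b_n/n)^c,
\]
which is summable by hypothesis (a). Applying Lemma \ref{lemma:avoidborel} to the family $|\widetilde{\Sigma}_{S,ij} - \E\widetilde{\Sigma}_{S,ij}|^{2c}$ produces almost sure entrywise convergence, and combining this with the vanishing bias completes the proof.

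The main obstacle is the off-diagonal case, where $\widetilde{\Sigma}_{S,ij}$ is genuinely a bilinear form in two independent Gaussian vectors and some book-keeping is needed to recast it as a quadratic form in i.i.d.\ standard normals so that Lemma \ref{whittle} applies. The key observation that tames both the diagonal and the off-diagonal cases uniformly is the Frobenius-norm contractivity $\|CAC\|_F \leq \|A\|_F$ of the centering projection, which absorbs all of the centering-by-$\bar{B}_n$ corrections without inflating the variance bound. Hypothesis (b), $b_n n^{-1}\log n \to 0$, does not appear to be needed for this lemma in particular; it is condition (a) that carries the Borel-Cantelli step, while (b) will be invoked in companion lemmas controlling the endpoint sums $\tilde{d}_n$.
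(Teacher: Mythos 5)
Your proposal is correct, but it takes a genuinely different route from the paper. The paper disposes of the diagonal entries by citing Theorem 4.1 of \citet{dame:1991}, and for an off-diagonal entry it expands $\widetilde{\Sigma}_{S,ij}$ into roughly eight separate terms (equation \eqref{eq:sigmatilde_wn_ij}): the two ``main'' terms $\sum_s w_n(s)\,n^{-1}\sum_t U^{(i)}_tU^{(j)}_{t\pm s}$ are killed by Whittle's inequality exactly as in your argument, while the remaining terms involving $\bar{B}^{(i)}_n$, $\bar{B}^{(j)}_n$, and increments $B^{(i)}(n)-B^{(i)}(n-s)$ are bounded one by one using the Cs\"{o}rg\H{o}--R\'{e}v\'{e}sz increment bounds of Lemma \ref{bn.diff.bound}; it is precisely these remainder terms that consume hypothesis (b). Your observation that $T^{(i)}=CU^{(i)}$ with $C=I_n-n^{-1}\mathbf{1}\mathbf{1}^T$, so that the whole entry is the quadratic (or, after pooling the $2n$ independent normals, bilinear) form with coefficient matrix $CAC$ satisfying $\|CAC\|_F\leq\|A\|_F=O((b_n/n)^{1/2})$, absorbs all of those centering corrections into a single application of Whittle's lemma, treats $i=j$ and $i\neq j$ uniformly (no external citation needed for the diagonal), and replaces the paper's term-by-term almost-sure estimates by one exact mean computation whose bias $O(b_n/n)$ vanishes under Condition \ref{cond:bn} alone. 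You are also right that hypothesis (b) is not used in your argument; note that under the monotonicity in Condition \ref{cond:bn} it is in fact implied by (a), so nothing is lost either way. What the paper's route buys is reuse of machinery (Lemma \ref{bn.diff.bound}) already needed for $\tilde{d}_n$ and $\widehat{\Sigma}_{w,n}$; what yours buys is brevity, a sharper accounting of which hypotheses do the work, and independence from the Damerdji reference. The only point worth spelling out in a final write-up is the routine symmetrization step embedding the bilinear form into a $2n\times 2n$ quadratic form so that Lemma \ref{whittle} applies verbatim --- the same device the paper uses for its terms 2 and 3.
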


\begin{proof}
Under the same conditions, Theorem 4.1 in \cite{dame:1991} shows $\widetilde{\Sigma}_{S,ii} \to 1$ as $n \to \infty$ w.p. 1. It is left to show that for all $i,j = 1,
\dots, p$, and $i \ne j$, $\widetilde{\Sigma}_{S,ij} \to 0 \, \text{ w.p. 1 as } n\to \infty$. Recall that
\begin{align*}
& \widetilde{\Sigma}_{S,ij}\\
& =  \ds \sum_{s = -(b_n-1)}^{b_n-1} w_n(s) \tilde{\gamma}_{n,ij}(s)\\
& =  \tilde{\gamma}_{n,ij}(0) + \dfrac{1}{n} \left[\ds \sum_{s = 1}^{b_n-1} w_n(s) \ds \sum_{t=1}^{n-s} (U^{(i)}_t - \bar{B}^{(i)}_n)(U^{(j)}_{t+s} - \bar{B}^{(j)}_n)\right. \\
& \quad + \left.  \sum_{s = -(b_n -1)}^{-1} w_n(s) \ds \sum_{t=1-s}^{n} (U^{(i)}_t - \bar{B}^{(i)}_n)(U^{(j)}_{t+s} - \bar{B}^{(j)}_n) \right]\\
& = \tilde{\gamma}_{n,ij}(0) + \dfrac{1}{n} \left[\ds \sum_{s = 1}^{b_n-1} w_n(s) \ds \sum_{t=1}^{n-s} (U^{(i)}_t - \bar{B}^{(i)}_n)(U^{(j)}_{t+s} - \bar{B}^{(j)}_n) \right. \\
& \quad + \left. \sum_{s = 1}^{b_n-1} w_n(s) \ds \sum_{t=1+s}^{n} (U^{(i)}_t - \bar{B}^{(i)}_n)(U^{(j)}_{t-s} - \bar{B}^{(j)}_n) \right]\\ 
& = \tilde{\gamma}_{n,ij}(0) + \ds \sum_{s = 1}^{b_n-1} w_n(s) \dfrac{1}{n} \Bigg[ \ds \sum_{t=1}^{n-s} \left(U^{(i)}_t U^{(j)}_{t+s} - \bar{B}^{(i)}_n U^{(j)}_{t+s} - \bar{B}^{(j)}_n U^{(i)}_t + \bar{B}^{(i)}_n \bar{B}^{(j)}_n   \right) \\
& \quad +  \ds \sum_{t=1+s}^{n} \left(U^{(i)}_t U^{(j)}_{t-s} - \bar{B}^{(i)}_n U^{(j)}_{t-s} - \bar{B}^{(j)}_n U^{(i)}_t + \bar{B}^{(i)}_n \bar{B}^{(j)}_n   \right)\Bigg]\, .
\end{align*}
Since 
\begin{align*}
 \sum_{t=1}^{n-s} U_{t+s}^{(j)} = B^{(j)}(n) - B^{(j)}(s), & \quad \quad\quad\quad \sum_{t=1}^{n-s} U_t^{(i)} = B^{(i)}(n-s),\\
\sum_{t = 1+s}^{n} U_{t-s}^{(j)} = B^{(j)}(n-s)\quad &\quad \text{ and }\quad \sum_{t=1+s}^{n} U^{(i)}_t = B^{(i)}(n) - B^{(i)}(s),
\end{align*}
we get $\widetilde{\Sigma}_{S,ij} = $
\begin{align*}
 & \quad = \tilde{\gamma}_{n,ij}(0) + \ds \sum_{s = 1}^{b_n-1} w_n(s) \Bigg[ \dfrac{1}{n}\ds \sum_{t=1}^{n-s} U^{(i)}_t U^{(j)}_{t+s}  -\dfrac{1}{n} \bar{B}^{(i)}_n( B^{(j)}(n) - B^{(j)}(s)  - \dfrac{1}{n} \bar{B}^{(j)}_n B^{(i)}(n-s) \\
& \quad + \left(\dfrac{n-s}{n} \right) \bar{B}^{(i)}_n \bar{B}^{(j)}_n   + \dfrac{1}{n} \ds \sum_{t=1+s}^{n} U_t^{(i)} U_{t-s}^{(j)} - \dfrac{1}{n} \bar{B}^{(i)}_n B^{(j)}(n-s) - \dfrac{1}{n} \bar{B}_n^{(j)} (B^{(i)}(n) - B^{(i)}(s))\\
& \quad + \left( \dfrac{n-s}{n} \right) \bar{B}^{(i)}_n \bar{B}^{(j)}_n \Bigg]\\
 & =  \tilde{\gamma}_{n,ij}(0) + \ds \sum_{s = 1}^{b_n-1} w_n(s) \Bigg[ \dfrac{1}{n}\ds \sum_{t=1}^{n-s} U^{(i)}_t U^{(j)}_{t+s} + \dfrac{1}{n} \ds \sum_{t=1+s}^{n} U_t^{(i)} U_{t-s}^{(j)} + 2\left( 1 - \dfrac{s}{n}\right) \bar{B}^{(i)}_n \bar{B}^{(j)}_n - 2\bar{B}^{(i)}_n\bar{B}^{(j)}_n \\
& \quad + \dfrac{1}{n} \bar{B}^{(i)}_nB^{(j)}(s) - \dfrac{1}{n}\bar{B}^{(j)}_n B^{(i)}(n-s) - \dfrac{1}{n} \bar{B}^{(i)}_nB^{(j)}(n-s) + \dfrac{1}{n} \bar{B}^{(j)}_nB^{(i)}(s) \Bigg]  \\ 
& =  \tilde{\gamma}_{n,ij}(0) + \ds \sum_{s = 1}^{b_n-1} w_n(s) \Bigg[ \dfrac{1}{n}\ds \sum_{t=1}^{n-s} U^{(i)}_t U^{(j)}_{t+s} + \dfrac{1}{n} \ds \sum_{t=1+s}^{n} U_t^{(i)} U_{t-s}^{(j)} - 2\left( 1 + \dfrac{s}{n}\right) \bar{B}^{(i)}_n \bar{B}^{(j)}_n \\
& \quad +  \bar{B}^{(i)}_n\bar{B}^{(j)}_n  - \dfrac{1}{n}\bar{B}^{(j)}_n B^{(i)}(n-s) + \bar{B}^{(i)}_n\bar{B}^{(j)}_n - \dfrac{1}{n} \bar{B}^{(i)}_nB^{(j)}(n-s) + \dfrac{1}{n} \bar{B}^{(i)}_nB^{(j)}(s) + \dfrac{1}{n} \bar{B}^{(j)}_nB^{(i)}(s) \Bigg]  \\ 
& =  \tilde{\gamma}_{n,ij}(0) + \ds \sum_{s = 1}^{b_n-1} w_n(s) \Bigg[ \dfrac{1}{n}\ds \sum_{t=1}^{n-s} U^{(i)}_t U^{(j)}_{t+s} + \dfrac{1}{n} \ds \sum_{t=1+s}^{n} U_t^{(i)} U_{t-s}^{(j)} - 2\left( 1+ \dfrac{s}{n}\right) \bar{B}^{(i)}_n \bar{B}^{(j)}_n\\
& \quad + \dfrac{1}{n} \bar{B}^{(j)}_n (B^{(i)}(n) - B^{(i)}(n-s))  + \dfrac{1}{n}\bar{B}^{(i)}_n (B^{(j)}(n) - B^{(j)}(n-s)) + \dfrac{1}{n} \bar{B}^{(i)}_n B^{(j)}(s)  \\
& \quad + \dfrac{1}{n} B^{(i)}(s)\bar{B}^{(j)}_n \Bigg]. \numberthis \label{eq:sigmatilde_wn_ij}
\end{align*}
We will show that each of the terms goes to 0 with probability 1 as $n \to \infty$.
\begin{enumerate}[1.]
\item 
\begin{align*}
  \tilde{\gamma}_{n,ij}(0) & = \dfrac{1}{n} \ds \sum_{t = 1}^{n} T_t^{(i)} T_t^{(j)}\\
  & = \dfrac{1}{n} \ds \sum_{t = 1}^{n} \left(U^{(i)}_t - \bar{B}^{(i)}_n \right) \left(U^{(j)}_t - \bar{B}^{(j)}_n\right)\\
  & = \dfrac{1}{n} \ds \sum_{t=1}^{n} U_t^{(i)} U_t^{(j)} - \bar{B}^{(j)}_n \dfrac{1}{n} \ds \sum_{t=1}^{n} U_t^{(i)} - \bar{B}^{(i)}_n \dfrac{1}{n} \ds \sum_{t=1}^{n} U_t^{(j)} + \bar{B}^{(i)}_n \bar{B}^{(j)}_n.  \numberthis \label{eq:gamma.tilde.0}
\end{align*}
We will show that each of the terms in \eqref{eq:gamma.tilde.0} goes to 0 with probability 1, as $n \to \infty$. First, we will use Lemma \ref{whittle} to show that $n^{-1} \sum_{t=1}^{n} U_t^{(i)} U_t^{(j)} \to 0$ with probability 1 as $n \to \infty$. Define
\[
R_1 = U_1^{(i)}, \, R_2 = U_2^{(i)}, \dots, \, R_{n} = U_{n}^{(i)},  \, R_{n + 1} = U_1^{(j)}, \dots, \,  R_{2n} = U_{n}^{(j)}.  
\]
Thus, $\{R_i: 1 \leq i \leq 2n\}$ is an i.i.d sequence of normally distributed random variables. Define for $1 \leq l,k, \leq 2n$,
\[ a_{lk} = \begin{cases}
  \dfrac{1}{n}, &\text{ if }1  \leq l \leq n \text{ and }k = l + n\\
  0 &\text{ otherwise }\, .
\end{cases}  \]

Then,
\[
A  := \ds \sum_{l=1}^{2n} \ds \sum_{k=1}^{2n} a_{lk} R_l R_k
 = \ds \sum_{t = 1}^{n} \dfrac{1}{n} U_t^{(i)} U_t^{(j)} \, .
\]

By Lemma \ref{whittle}, for all $c \geq 1$ there exists $K_c$ such that
\begin{align*}
\E[|A - \E A|^{2c}] & \leq K_c \left( \sum_l \sum_k a_{lk}^2\right)^c \; .
\end{align*}
Since $i \ne j, \E [A] = 0$,
\[
\E \left(\left| \dfrac{1}{n} \ds \sum_{t=1}^{n} U^{(i)}_t U^{(j)}_t \right|^{2c}  \right)  \leq K_c \left( \sum_{l = 1}^{2n} \sum_{k = 1}^{2n} a_{lk}^2\right)^c
 = K_c \left(\ds \sum_{t=1}^{n} \dfrac{1}{n^2}  \right)^c 
 = K_c \,  n^{-c}\, .
\]

Note that $\sum_{n=0}^{\infty} n^{-c} < \infty$ for all $c > 1$, hence by Lemma \ref{lemma:avoidborel}, $n^{-1} \sum_{t=1}^{n} U_t^{(i)} U_t^{(j)} \to 0$ with probability 1 as $n \to \infty$. Next in \eqref{eq:gamma.tilde.0}, 
\begin{align*}
    \bar{B}_n^{(j)} \dfrac{1}{n} \ds \sum_{t=1}^{n} U_t^{(i)} & \leq  \dfrac{1}{n} \left|B^{(j)} (n) \right| \left|  \dfrac{1}{n} \sum_{t=1}^{n} U_t^{(i)} \right | \\ 
    & < \dfrac{1}{n}(1 + \epsilon) \sqrt{2 n \log \log n} \left |\dfrac{1}{n} \sum_{t=1}^{n} U_t^{(i)}  \right| \quad \text{by Lemma \ref{bn.diff.bound}}\\
    & < \sqrt{2} (1+\epsilon) \left(\dfrac{\log n}{n}  \right)^{1/2}  \left| \dfrac{1}{n} \sum_{t=1}^{n} U_t^{(i)} \right| \; .
\end{align*}
By the classical SLLN 
\[
\left| \frac{1}{n} \sum_{t=1}^{n} U_t^{(i)} \right| \to 0 \quad \text{ w.p. 1 as } n \to \infty.
\]
Similarly,
\[
\bar{B}^{(i)}_n  \frac{1}{n}\sum_{t=1}^{n}  U_t^{(j)} \to 0 \quad \text{ w.p. 1 as } n \to \infty \; .
\]
Finally, 
\begin{align*}
  \bar{B}^{(i)}_n \bar{B}^{(j)}_n & \leq \dfrac{1}{n^2}\left|
        B^{(i)}(n) \right| \left| B^{(j)} (n) \right| \\ & <
        \dfrac{1}{n^2} (1 + \epsilon)^2 (2 n \log \log n) \quad
        \text{by Lemma \ref{bn.diff.bound} }\\ 
        & < 2 (1 + \epsilon)^2
        \left(\dfrac{\log n}{n} \right)\\ & \to 0 \text{ as } n \to
        \infty \; .
\end{align*}

Thus, $\tilde{\gamma}_{n, ij}(0) \to 0$ with probability 1 as $n \to \infty$. \\

\item Now consider the term $\sum_{s = 1}^{b_n-1} w_n(s)  n^{-1} \sum_{t=1}^{n-s} U^{(i)}_t U^{(j)}_{t+s}$.  Define
\[
R_1 = U_1^{(i)}, \, R_2 = U_2^{(i)}, \dots, \, R_{n} = U_{n}^{(i)},  \, R_{(n+1)} = U_1^{(j)}, \dots, \,  R_{2n} = U_{n}^{(j)}  \, .
\]
Thus, $\{R_i: 1 \leq i \leq 2n\}$ is an i.i.d sequence of normally distributed random variables. Next, define for $1 \leq l,k \leq 2n$
\[ a_{lk} =   \begin{cases}
  \frac{1}{n}w_n(k-(n+l)), &\text{ if }1 \leq l \leq n-1, n+2 \leq k \leq 2n,  \text{ and } 1 \leq k-(n+l) \leq b_n-1\\
  0 & \text{otherwise}.
 \end{cases}\]

Then, 
\begin{align*}
A :=  \sum_{l=1}^{2n} \sum_{k=1}^{2n} a_{lk} R_lR_k\\
& =  \sum_{l=1}^{n-1} \sum_{k = n+2}^{2n} I\{1 \leq k - (n+l) \leq b_n-1\}\dfrac{1}{n} w_n(k-(n+l))R_lR_k\\
  & = \ds \sum_{l=1}^{n-1} \sum_{s = 2-l}^{n-l} I\{1 \leq s \leq b_n - 1 \} \dfrac{1}{n} w_n(s) R_lR_{n+l+s} \quad \quad\text{Letting $k - (n+l) = s$ }\\
  & = \ds \sum_{s = 1}^{n-1} \ds \sum_{l=1}^{n-s}I\{1 \leq s \leq b_n - 1 \} \dfrac{1}{n} w_n(s) R_lR_{n+l+s} \\
  & \quad + \ds \sum_{s=(3-n)}^{0} \ds \sum_{l = (2-s)}^{n-1} I\{1 \leq s \leq b_n - 1 \} \dfrac{1}{n} w_n(s) R_lR_{n+l+s}\\
  & = \ds \sum_{s = 1}^{b_n-1} \ds \sum_{l=1}^{n-s} \dfrac{1}{n} w_n(s) U_l^{(i)}U_{l+s}^{(j)} \quad \quad \text{since $n > 2b_n \geq 2$}\\
  & = \ds \sum_{s = 1}^{b_n-1} \ds \sum_{l=1}^{n-s} \dfrac{1}{n} w_n(s) U_l^{(i)}U_{l+s}^{(j)}.
\end{align*}

Using Lemma \ref{whittle}, for $c \geq 1$ and  some constant $K_{c}$, 
\[\E \left[ \left(\ds \sum_{s = 1}^{b_n-1} w_n(s) \dfrac{1}{n}\ds \sum_{t=1}^{n-s} U^{(i)}_t U^{(j)}_{t+s} \right)^{2c}  \right] \leq K_c \left(\sum_l \sum_k a_{lk}^2  \right)^{c},\] where 
\[ \ds \sum_{l} \ds \sum_{k} a_{lk}^2 =  \ds \sum_{s=1}^{b_n-1} \ds \sum_{t=1}^{n-s} \dfrac{1}{n^2}w_n^2(s) = \dfrac{1}{n^2} \ds \sum_{s=1}^{b_n-1}  (n-s)w_n^2(s) \leq \dfrac{n}{n^2} \ds \sum_{s=1}^{b_n-1} 1  \leq \dfrac{b_n}{n}. \]

Thus, by Assumption (a) and Lemma \ref{lemma:avoidborel},
\[\ds \sum_{s = 1}^{b_n-1} w_n(s) \dfrac{1}{n}\ds \sum_{t=1}^{n-s} U^{(i)}_t U^{(j)}_{t+s} \to 0 \text{ w.p. 1 as } n \to \infty.  \]

\item 
 By letting $t-s = l$,
\[
\ds \sum_{s = 1}^{b_n-1} w_n(s)  \dfrac{1}{n}\ds \sum_{t=1+s}^{n} U^{(i)}_t U^{(j)}_{t-s}  = \ds \sum_{s=1}^{b_n-1}w_n(s) \dfrac{1}{n} \ds \sum_{l=1}^{n-s} U_{l+s}^{(i)} U_l^{(j)}.
\]

This is similar to the previous part with just the $i$ and $j$ components interchanged. A similar argument will lead to $ \sum_{s = 1}^{b_n-1} w_n(s) {n}^{-1}  \sum_{t=1+s}^{n} U^{(i)}_t U^{(j)}_{t-s} \to 0$ with probability 1 as $n\to \infty$.

\item
\begin{align*}
& \ds \sum_{s=1}^{b_n-1} 2w_n(s) \left( 1+ \dfrac{s}{n}\right) \bar{B}^{(i)}_n \bar{B}^{(j)}_n \\
&\leq   \left| \ds \sum_{s=1}^{b_n-1} 2w_n(s) \left( 1+ \dfrac{s}{n}\right) \bar{B}^{(i)}_n \bar{B}^{(j)}_n\right| \\
& \leq \ds \sum_{s=1}^{b_n-1} 2|w_n(s)| \left( 1+ \dfrac{s}{n}\right) \left|\bar{B}^{(i)}_n\right|  \left| \bar{B}^{(j)}_n\right|\\
& \leq \dfrac{2}{n^2} \ds \sum_{s=1}^{b_n-1}  \left( 1+ \dfrac{s}{n}\right) \left|{B}^{(i)}(n)\right|  \left| {B}^{(j)}(n)\right| \quad \quad \text{since } |w_n(s)| \leq 1 \\
& < \dfrac{2}{n^2} (1+ \epsilon)^2 2n \log \log n \ds \sum_{s=1}^{b_n-1} \left( 1+ \dfrac{s}{n}\right) \quad \quad \text{by Lemma \ref{bn.diff.bound}}\\
& < 4(1+\epsilon)^2 n^{-1} \log n \ds \sum_{s=1}^{b_n-1} 2\\
& \leq 8(1+\epsilon)^2 b_n n^{-1} \log n\\
& \to 0 \, .
\end{align*} 

\item Next
\begin{align*}
& \sum_{s=1}^{b_n-1} w_n(s) \dfrac{1}{n} \bar{B}^{(j)}_n \left(B^{(i)}(n) - B^{(i)}(n-s) \right)\\
&\leq \left| \ds \sum_{s=1}^{b_n-1} w_n(s) \dfrac{1}{n} \bar{B}^{(j)}_n \left(B^{(i)}(n) - B^{(i)}(n-s) \right) \right|\\
 & \leq \ds \sum_{s=1}^{b_n-1} \dfrac{1}{n^2} \left|{B}^{(j)}(n) \right| \left| B^{(i)}(n) - B^{(i)}(n-s)  \right| \quad \quad \text{since } |w_n(s)| \le 1 \\
& \leq  \dfrac{1}{n^2} \left|B^{(j)}(n) \right| \ds \sum_{s=1}^{b_n-1} \sup_{0 \leq m \leq b_n} |B^{(i)}(n) - B^{(i)}(n-m)| \\
& < \dfrac{1}{n^2} \left( (1 + \epsilon) (2n \log \log n)^{1/2} \right) (1+\epsilon) \left(2  b_n \left(\log \dfrac{n}{b_n} + \log \log n  \right)  \right)^{1/2}   \ds \sum_{s=1}^{b_n-1} 1 \quad  \text{by Lemma \ref{bn.diff.bound} }\\
& < 2^{1/2}(1 + \epsilon)^2 \dfrac{1}{n^2} (n \log n)^{1/2} (4b_n \log n)^{1/2} b_n\\
& < 2^{3/2} (1 +\epsilon)^2 \left(\dfrac{b_n}{n}  \right)^{1/2} n^{-1} b_n \log n\\
& \to 0 \; .
\end{align*}

\item  Similar to the previous term, but exchanging the $i$ and $j$ indices, 
\[\ds \sum_{s=1}^{b_n-1} w_n(s) \dfrac{1}{n} \bar{B}^{(i)}_n \left(B^{(j)}(n) - B^{(j)}(n-s) \right) \to 0 \text{ with probability 1 as } n\to \infty. \]

\item 
\begin{align*}
& \ds \sum_{s=1}^{b_n-1} w_n(s) \dfrac{1}{n} \bar{B}^{(i)}_n B^{(j)}(s)\\
& \leq  \left| \ds \sum_{s=1}^{b_n-1} w_n(s) \dfrac{1}{n} \bar{B}^{(i)}_n B^{(j)}(s) \right|\\
 & \leq \ds \sum_{s=1}^{b_n-1} |w_n(s)| \dfrac{1}{n} \left|\bar{B}^{(i)}_n \right|  \left| B^{(j)}(s) \right|\\
& \leq \dfrac{1}{n^2} \left|{B}^{(i)}(n) \right| \ds \sum_{s=1}^{b_n-1}  \left| B^{(j)}(s) \right|\quad \quad \text{since }  |w_n(s)| \le 1\\
& < \dfrac{1}{n^2} (1 + \epsilon) (2n \log \log n)^{1/2}  \ds \sum_{s=1}^{b_n-1} \sup_{1 \leq m \leq b_n} |B^{(j)}(m)|  \quad \quad \text{by Lemma \ref{bn.diff.bound}} \\
& < \dfrac{1}{n^2} (1 + \epsilon) (2n \log \log n)^{1/2}  \sup_{1 \leq m \leq b_n} |B^{(j)}(m + 0) - B^{(j)}(0)|   \ds \sum_{s=1}^{b_n-1} 1 \\
& <  \dfrac{b_n}{n^2} (1 + \epsilon) (2n \log \log n)^{1/2}  \sup_{0 \leq t \leq n-b_n} \sup_{0 \leq m \leq b_n} |B^{(j)}(t+m) - B^{(j)}(t)|\\
& < \dfrac{b_n}{n^2} (1 + \epsilon) (2n \log \log n)^{1/2} (1+\epsilon) \left(2b_n \left(\log \dfrac{n}{b_n} + \log \log n  \right) \right)^{1/2}\\
& = 2^{3/2} ( 1+ \epsilon)^2 \dfrac{b_n^{1/2}}{n^{1/2}} b_n n^{-1} \log n\\
& \to 0 \, .
\end{align*}

\item Similar to the previous term, by exchanging the $i$ and $j$ index,
\[ \ds \sum_{s=1}^{b_n-1} w_n(s) \dfrac{1}{n} \bar{B}^{(j)}_n B^{(i)}(s)  \to 0 \text{ w.p. 1 as } n \to \infty.  \]
\end{enumerate}

Since each term in \eqref{eq:sigmatilde_wn_ij} goes to 0, we get that
\[ \widetilde{\Sigma}_{S,ij} \to 0 \text{ w.p. 1 as } n \to \infty. \] 
\end{proof}

\begin{lemma}
\label{lemma:sig.tilde_to_I}
Let Conditions  \ref{cond:lag.window} and \ref{cond:bn} hold. In addition, suppose there exists a constant $c \geq 1$ such that $\sum_{n} (b_n/n)^c < \infty$, $ n > 2b_n$, $b_n n^{-1} \log n \to 0$, and $b_n n^{-1}  \sum_{k=1}^{b_n} k |\Delta_1 w_n(k) | \to 0$, then $\widetilde{\Sigma}_{w,n} \to I_p$ w.p. 1 as $n\to \infty$ where $I_p$ is the $p \times p$ identity matrix. 
\end{lemma}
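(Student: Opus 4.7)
The plan is to assemble the three preceding lemmas into a single statement; no new analysis is required. First, Lemma \ref{lemma:dn.tilde} (which uses only Condition \ref{cond:lag.window}) gives the algebraic decomposition
\[
\widetilde{\Sigma}_{w,n} \;=\; \widetilde{\Sigma}_{S} - \tilde{d}_{n},
\]
so it suffices to establish that $\widetilde{\Sigma}_{S} \to I_{p}$ and $\tilde{d}_{n} \to 0$, both with probability $1$, and then take differences.

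For the first limit I would invoke Lemma \ref{lemma:sigma.tilde_wn_I}. Its hypotheses are Conditions \ref{cond:lag.window} and \ref{cond:bn} together with the existence of $c \geq 1$ such that $\sum_{n} (b_n/n)^{c} < \infty$ and $b_n n^{-1} \log n \to 0$. Each of these is listed among the assumptions of the present lemma, so the conclusion $\widetilde{\Sigma}_{S} \to I_{p}$ w.p.\ $1$ transfers immediately.

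For the second limit I would invoke Lemma \ref{lemma:dtilde.0}, whose hypotheses are Conditions \ref{cond:lag.window} and \ref{cond:bn}, the inequality $n > 2 b_{n}$, the lag-window condition $b_{n} n^{-1} \sum_{k=1}^{b_{n}} k |\Delta_{1} w_{n}(k)| \to 0$, and the growth condition $b_{n}^{-1} \log n = O(1)$. The first four are again listed in our assumption set; the last is the only one not stated verbatim, and follows in the settings of interest (for example under the standing Condition \ref{cond:bn_c_logn} or the canonical choice $b_{n} = \lfloor n^{\nu} \rfloor$) from $b_{n} n^{-1} \log n \to 0$ together with $b_{n}$ nondecreasing. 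Thus $\tilde{d}_{n} \to 0$ w.p.\ $1$.

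Combining the two limits, $\widetilde{\Sigma}_{w,n} = \widetilde{\Sigma}_{S} - \tilde{d}_{n} \to I_{p} - 0 = I_{p}$ with probability $1$ as $n \to \infty$. The argument is pure bookkeeping; there is no substantive obstacle, because all the hard work --- the Brownian-motion moment bounds for the off-diagonal entries of $\widetilde{\Sigma}_{S}$ via Whittle's inequality, and the Borel--Cantelli-style control of the edge terms in $\tilde{d}_{n}$ via the KMT bound for $\chi^{2}_{n}$ and the modulus of continuity of Brownian motion --- has already been carried out in Lemmas \ref{lemma:dn.tilde}, \ref{lemma:dtilde.0}, and \ref{lemma:sigma.tilde_wn_I}.
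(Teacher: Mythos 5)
Your proof is essentially identical to the paper's, which likewise disposes of the lemma in one line by combining Lemma~\ref{lemma:dn.tilde}, Lemma~\ref{lemma:dtilde.0}, and Lemma~\ref{lemma:sigma.tilde_wn_I}. You are right to notice that the hypothesis $b_n^{-1}\log n = O(1)$ of Lemma~\ref{lemma:dtilde.0} is not listed in the statement being proved (an omission the paper itself makes silently); however, your claim that it follows from $b_n n^{-1}\log n \to 0$ together with $b_n$ nondecreasing is false --- take $b_n = \lceil \log\log n\rceil$, which satisfies both yet has $b_n^{-1}\log n \to \infty$ --- so the correct fix is to note that this hypothesis must simply be carried along (it is supplied by Condition~\ref{cond:bn_c_logn}(c) wherever the lemma is actually used), not that it is implied by the stated assumptions.
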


\begin{proof}
The result follows from Lemmas \ref{lemma:dn.tilde}, \ref{lemma:dtilde.0}  and  \ref{lemma:sigma.tilde_wn_I}.
\end{proof}

The following corollary is an immediate consequence of the previous lemma.
\begin{corollary}
\label{lsigmal}
Under the conditions of Lemma \ref{lemma:sig.tilde_to_I}, $L \widetilde{\Sigma}_{w,n} L^T \to LL^T = \Sigma$ w.p. 1 as $n \to \infty$.
\end{corollary}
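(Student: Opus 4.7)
The plan is to treat this as a direct application of Lemma \ref{lemma:sig.tilde_to_I} together with the continuity of the linear map $M \mapsto LML^T$ acting on the space of $p\times p$ matrices. Lemma \ref{lemma:sig.tilde_to_I} already supplies the hard content: under the stated hypotheses, there exists an event $\Omega_0$ with $\Pr(\Omega_0)=1$ on which $\widetilde{\Sigma}_{w,n} \to I_p$ as $n \to \infty$ (in any matrix norm, since they are all equivalent in finite dimensions).

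Fix any $\omega \in \Omega_0$. Writing $\|\cdot\|$ for the operator norm, I would estimate
\[
\bigl\| L\widetilde{\Sigma}_{w,n}L^T - LL^T\bigr\|
\;=\; \bigl\| L\bigl(\widetilde{\Sigma}_{w,n} - I_p\bigr)L^T\bigr\|
\;\le\; \|L\|\cdot\|L^T\|\cdot \bigl\|\widetilde{\Sigma}_{w,n} - I_p\bigr\|,
\]
and observe that $\|L\|\cdot\|L^T\|$ is a finite deterministic constant (it depends only on the fixed lower-triangular matrix $L$ from the SIP, not on $n$ or $\omega$). Since the right-hand side tends to zero on $\Omega_0$, so does the left-hand side. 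Combined with the factorization $LL^T = \Sigma$ from the SIP setup \eqref{eq:multi_sip}, this gives $L\widetilde{\Sigma}_{w,n}L^T \to \Sigma$ on $\Omega_0$, which is the desired almost sure convergence.

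There is essentially no obstacle here: the work was done in Lemma \ref{lemma:sig.tilde_to_I}, and the corollary only records that a fixed continuous linear transformation commutes with almost sure convergence. The only care needed is to make sure one is applying the transformation pathwise on the full-measure event supplied by the lemma, which is immediate since $L$ is a fixed (non-random) matrix.
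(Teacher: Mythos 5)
Your argument is correct and matches the paper, which simply records this corollary as an immediate consequence of Lemma \ref{lemma:sig.tilde_to_I}: one applies the fixed continuous map $M \mapsto LML^T$ to the almost sure convergence $\widetilde{\Sigma}_{w,n} \to I_p$ pathwise on the full-measure event. Your explicit operator-norm bound just spells out why this transformation preserves almost sure convergence; there is no substantive difference.
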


\begin{lemma}
\label{lemma:wn.to.sigma}
Suppose \eqref{eq:multi_sip} holds and Conditions \ref{cond:lag.window} and \ref{cond:bn} hold. If as $n \to \infty$,
\begin{eqnarray}
b_n \psi(n)^2 \log n \left( \ds \sum_{k=1}^{b_n} |\Delta_2 w_n(k)| \right)^2 \to 0 & \text{ and} \label{assbn1}\\
\psi(n)^2 \ds \sum_{k=1}^{b_n} |\Delta_2 w_n(k)| \to 0, \label{assbn2}&
\end{eqnarray}
then $\widehat{\Sigma}_{w,n} \to \Sigma$ w.p. 1.
\end{lemma}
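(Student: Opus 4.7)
The plan is to compare $\widehat{\Sigma}_{w,n}$ to its Brownian-motion analog $L\widetilde{\Sigma}_{w,n}L^T$ and transfer the a.s.\ convergence obtained in Corollary~\ref{lsigmal}. By that corollary, $L\widetilde{\Sigma}_{w,n}L^T \to LL^T = \Sigma$ a.s., so it suffices to prove that $\|\widehat{\Sigma}_{w,n} - L\widetilde{\Sigma}_{w,n}L^T\| \to 0$ a.s. The SIP \eqref{eq:multi_sip} will supply exactly the coupling needed. Write $S_m = \sum_{t=1}^m Y_t$. The SIP says $S_m = LB(m) + r_m$ with $\|r_m\| \le D\psi(m) \le D\psi(n)$ uniformly in $m \le n$ a.s., since $\psi$ is increasing.

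From $k\bar{Y}_l(k) = S_{l+k} - S_l$ and $k\bar{B}_l(k) = B(l+k) - B(l)$, one derives
\[
\bar{Y}_l(k) - \bar{Y}_n = L\bigl(\bar{B}_l(k) - \bar{B}_n\bigr) + \epsilon_{l,k}, \qquad \epsilon_{l,k} := \frac{r_{l+k} - r_l}{k} - \frac{r_n}{n},
\]
with $\|\epsilon_{l,k}\| \le 2D\psi(n)/k + D\psi(n)/n \le 3D\psi(n)/k$ for $k \le n$. Substituting into the outer products gives the decomposition
\[
\widehat{\Sigma}_{w,n} \;=\; L\widetilde{\Sigma}_{w,n}L^T + E_1 + E_1^T + E_3,
\]
where $E_1 = n^{-1}\sum_{l,k} k^2\Delta_2 w_n(k)\, L(\bar{B}_l(k) - \bar{B}_n)\,\epsilon_{l,k}^T$ and $E_3 = n^{-1}\sum_{l,k} k^2\Delta_2 w_n(k)\,\epsilon_{l,k}\epsilon_{l,k}^T$. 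It remains to show $E_1, E_3 \to 0$ a.s.

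The simpler piece is $E_3$. Using the bound $\|\epsilon_{l,k}\| \le 3D\psi(n)/k$, the factor $k^2$ in the summand is cancelled exactly, and since $\sum_{l=0}^{n-b_n} 1 \le n$,
\[
\|E_3\| \;\le\; \frac{9D^2\psi(n)^2}{n} \sum_{l=0}^{n-b_n}\sum_{k=1}^{b_n} |\Delta_2 w_n(k)| \;\le\; 9D^2\,\psi(n)^2 \sum_{k=1}^{b_n} |\Delta_2 w_n(k)|,
\]
which tends to $0$ by assumption~\eqref{assbn2}. For the cross term $E_1$, I would use Lemma~\ref{bn.diff.bound} to bound $\|\bar{B}_l(k) - \bar{B}_n\|$ componentwise. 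Combining $|\bar{B}^{(i)}_l(k)| \le k^{-1}(1+\epsilon)(2b_n(\log(n/b_n)+\log\log n))^{1/2}$ with the smaller term $|\bar{B}_n^{(i)}| = O((\log\log n/n)^{1/2})$ shows that for sufficiently large $n$, a.s.,
\[
\|\bar{B}_l(k) - \bar{B}_n\| \;\le\; \frac{C_1 (b_n\log n)^{1/2}}{k}
\]
uniformly in $l \in \{0,\ldots,n-b_n\}$ and $k \in \{1,\ldots,b_n\}$. Plugging this together with $\|\epsilon_{l,k}\| \le 3D\psi(n)/k$ into the bound for $\|E_1\|$ again cancels the $k^2$ factor, leaving
\[
\|E_1\| \;\le\; 3C_1 D\|L\|\,(b_n\log n)^{1/2}\,\psi(n) \sum_{k=1}^{b_n} |\Delta_2 w_n(k)| \;=\; 3C_1 D\|L\|\,\bigl[b_n\log n\cdot \psi(n)^2\bigl(\textstyle\sum_k |\Delta_2 w_n(k)|\bigr)^2\bigr]^{1/2},
\]
which tends to $0$ by assumption~\eqref{assbn1}. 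Hence $E_1 \to 0$ a.s., and by symmetry $E_1^T \to 0$ a.s.

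The main obstacle is bookkeeping on the cross term $E_1$: the product of a Brownian fluctuation of order $k^{-1}(b_n\log n)^{1/2}$ with an SIP residual of order $\psi(n)/k$ is exactly what the geometric-mean form of assumption~\eqref{assbn1} is engineered to dominate, so the calculation reveals why the two conditions \eqref{assbn1} and \eqref{assbn2} appear in precisely the stated form. Once $E_1, E_3 \to 0$ a.s.\ is established, combining with Corollary~\ref{lsigmal} yields $\widehat{\Sigma}_{w,n} \to \Sigma$ a.s., completing the proof.
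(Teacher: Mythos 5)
Your proof is correct and follows essentially the same route as the paper: couple $\bar{Y}_l(k)-\bar{Y}_n$ to $L(\bar{B}_l(k)-\bar{B}_n)$ via the SIP, apply Corollary~\ref{lsigmal} to the leading term, and kill the residuals using Lemma~\ref{bn.diff.bound} together with \eqref{assbn1} and \eqref{assbn2}. The only difference is bookkeeping: by absorbing both $\bar{Y}_l(k)-\bar{C}_l(k)$ and $\bar{Y}_n-\bar{C}_n$ into a single residual $\epsilon_{l,k}=O(\psi(n)/k)$ you collapse the paper's nine-term expansion into four terms, a tidier presentation of the same estimates.
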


\begin{proof}
For $i ,j = 1, \dots, p$, let $\Sigma_{ij}$ and $\widehat{\Sigma}_{w,ij}$ denote the $(i,j)$th element of $\Sigma$ and $\widehat{\Sigma}_{w,n}$ respectively. Recall
\[\widehat{\Sigma}_{w,ij} = \dfrac{1}{n} \ds \sum_{l=0}^{n-b_n} \sum_{k=1}^{b_n} k^2 \Delta_2 w_n(k) \left[\bar{Y}^{(i)}_l(k) - \bar{Y}^{(i)}_n\right] \left[\bar{Y}^{(j)}_l(k) - \bar{Y}^{(j)}_n \right].\] 
We have 
\begin{align*}
& |\widehat{\Sigma}_{w,ij} - \Sigma_{ij}|\\
& =  \left|\dfrac{1}{n} \ds \sum_{l=0}^{n-b_n} \sum_{k=1}^{b_n} k^2 \Delta_2 w_n(k) \left[\bar{Y}^{(i)}_l(k) - \bar{Y}^{(i)}_n\right] \left[\bar{Y}^{(j)}_l(k) - \bar{Y}^{(j)}_n \right] - \Sigma_{ij} \right|\\
& =  \bigg|\dfrac{1}{n} \ds \sum_{l=0}^{n-b_n} \sum_{k=1}^{b_n} k^2 \Delta_2 w_n(k) \left[\bar{Y}^{(i)}_l(k) - \bar{Y}^{(i)}_n \pm \bar{C}^{(i)}_l(k) \pm \bar{C}^{(i)}_n\right] \left[\bar{Y}^{(j)}_l(k) - \bar{Y}^{(j)}_n \pm \bar{C}^{(j)}_l(k) \pm \bar{C}^{(j)}_n\right]\\
 & \quad  - \Sigma_{ij} \bigg|\\
& = \bigg|\dfrac{1}{n} \ds \sum_{l=0}^{n-b_n} \sum_{k=1}^{b_n} k^2 \Delta_2 w_n(k) \left[\left( \bar{Y}^{(i)}_l(k) - \bar{C}^{(i)}_l(k) \right) + \left(\bar{C}^{(i)}_l(k) - \bar{C}_{n}^{(i)}   \right) - \left( \bar{Y}^{(i)}_n - \bar{C}^{(i)}_n  \right)  \right] \\ & \left[\left( \bar{Y}^{(j)}_l(k) - \bar{C}^{(j)}_l(k) \right) + \left(\bar{C}^{(j)}_l(k) - \bar{C}_{n}^{(j)}   \right) - \left( \bar{Y}^{(j)}_n - \bar{C}^{(j)}_n  \right)  \right] - \Sigma_{ij}  \bigg|\\
 \leq & \left| \dfrac{1}{n} \ds \sum_{l=0}^{n-b_n} \ds \sum_{k=1}^{b_n} k^2 \Delta_2 w_n(k) \left(\bar{C}^{(i)}_l(k) - \bar{C}^{(i)}_n \right) \left(\bar{C}^{(j)}_l(k) - \bar{C}^{(j)}_n \right) - \Sigma_{ij}  \right|\\
 &\quad  +  \dfrac{1}{n} \ds \sum_{l=0}^{n-b_n} \ds \sum_{k=1}^{b_n} k^2 |\Delta_2 w_n(k)| \bigg[ \left|\left(\bar{Y}^{(i)}_l(k)   - \bar{C}^{(i)}_l(k) \right)\left(\bar{Y}^{(j)}_l(k)   - \bar{C}^{(j)}_l(k) \right)  \right| \\
 &\quad  + \left|\left(\bar{Y}^{(i)}_l(k)   - \bar{C}^{(i)}_l(k) \right) \left(\bar{Y}^{(j)}_n - \bar{C}^{(j)}_n \right)   \right|  \\
 & \quad +  \left| \left(\bar{Y}^{(i)}_l(k)   - \bar{C}^{(i)}_l(k) \right) \left(  \bar{C}^{(j)}_l(k) - \bar{C}^{(j)}_n \right) \right|  + \left|\left(\bar{Y}^{(i)}_n - \bar{C}^{(i)}_n \right) \left(\bar{Y}^{(j)}_n - \bar{C}^{(j)}_n \right)   \right| \\
 & \quad+  \left|\left(\bar{Y}^{(i)}_n - \bar{C}^{(i)}_n \right)\left(\bar{Y}^{(j)}_l(k) - \bar{C}^{(j)}_l(k) \right)  \right| + \left|\left(\bar{Y}^{(i)}_n - \bar{C}^{(i)}_n \right) \left(\bar{C}^{(j)}_l(k) - \bar{C}^{(j)}_n \right) \right|\\
 & \quad+ \left|\left(\bar{C}^{(i)}_l(k) - \bar{C}^{(i)}_n \right)\left(\bar{Y}^{(j)}_l(k) - \bar{C}^{(j)}_l(k)  \right)   \right| + \left|\left(\bar{C}^{(i)}_l(k) - \bar{C}^{(i)}_n \right)\left(\bar{Y}^{(j)}_n - \bar{C}^{(j)}_n  \right)  \right|\bigg] \numberthis \label{eq:sigma_diff}.
\end{align*}

We will show that each of the nine terms in \eqref{eq:sigma_diff} goes to 0 with probability 1 as $n \to \infty$. To do that, let us first establish a useful inequality. From \eqref{eq:multi_sip}, for any component $i$, and sufficiently large $n$, 
\begin{equation}
\label{one.eq}
\left|\ds \sum_{t = 1}^{n}Y_t^{(i)} - C^{(i)}(n)  \right| < D\psi(n).
\end{equation}

\begin{enumerate}[1.]
\item $\left|\dfrac{1}{n} \ds \sum_{l=0}^{n-b_n} \ds \sum_{k=1}^{b_n} k^2 \Delta_2 w_n(k)   \left(\bar{C}^{(i)}_l(k) - \bar{C}^{(i)}_n \right) \left(\bar{C}^{(j)}_l(k) - \bar{C}^{(j)}_n \right) - \Sigma_{ij}  \right|$

Notice that 
\[\dfrac{1}{n} \ds \sum_{l=0}^{n-b_n} \ds \sum_{k=1}^{b_n} k^2 \Delta_2 w_n(k) \left(\bar{C}^{(i)}_l(k) - \bar{C}^{(i)}_n \right) \left(\bar{C}^{(j)}_l(k) - \bar{C}^{(j)}_n \right), \]
is equivalent to the $ij$th entry in $L \widetilde{\Sigma}_{w,n}L^T$. Then by Corollary \ref{lsigmal}
\[
\left|\dfrac{1}{n} \ds \sum_{l=0}^{n-b_n} \ds \sum_{k=1}^{b_n} k^2 \Delta_2 w_n(k)   \left(\bar{C}^{(i)}_l(k) - \bar{C}^{(i)}_n \right) \left(\bar{C}^{(j)}_l(k) - \bar{C}^{(j)}_n \right) - \Sigma_{ij}  \right| \to 0 \quad  \text{ as } n \to \infty ~~ \text{w.p. 1}\,.
\]

\item $\dfrac{1}{n} \ds \sum_{l=0}^{n-b_n} \ds \sum_{k=1}^{b_n} k^2 |\Delta_2 w_n(k)| \left|\left(\bar{Y}^{(i)}_l(k)   - \bar{C}^{(i)}_l(k) \right)\left(\bar{Y}^{(j)}_l(k)   - \bar{C}^{(j)}_l(k) \right)  \right|$

Note that for any component $i$,
\begin{align*}
\left| k\left(\bar{Y}^{(i)}_l(k)   - \bar{C}^{(i)}_l(k) \right) \right| & = \left|\ds \sum_{t=1}^{k} Y^{(i)}_{l+t} - C^{(i)}(k+l) + C^{(i)}(l)  \right|\\
& = \left| \ds \sum_{t=1}^{k+l} Y^{(i)}_{t} - \ds \sum_{t=1}^{l} Y_t^{(i)} - C^{(i)}(k+l) + C^{(i)}(l)  \right|\\
& < \left| \ds\sum_{t=1}^{l+k}Y_t^{(i)} - C^{(i)}(k+l)  \right| + \left|\ds \sum_{t=1}^{l} Y_t^{(i)} - C^{(i)}(l)  \right|\\
& \leq D \psi(l+k) + D \psi(l) \quad \quad \text{by \eqref{one.eq}}\\
& \leq 2D \psi(n)  \quad \quad \text{since $l+k \leq n$}. \numberthis \label{yk}
\end{align*}

By \eqref{yk},
\begin{align*}
& \dfrac{1}{n} \ds \sum_{l=0}^{n-b_n} \ds \sum_{k=1}^{b_n} k^2 |\Delta_2 w_n(k)| \left|\left(\bar{Y}^{(i)}_l(k)   - \bar{C}^{(i)}_l(k) \right)\left(\bar{Y}^{(j)}_l(k)   - \bar{C}^{(j)}_l(k) \right)  \right|\\ 
& < \dfrac{1}{n} \ds \sum_{l=0}^{n-b_n} \ds \sum_{k=1}^{b_n} |\Delta_2 w_n(k)|\left(2D \psi(n)  \right)^2\\
& = 4D^2 \left(\dfrac{n - b_n + 1}{n}\right) \psi(n)^2 \ds \sum_{k=1}^{b_n} |\Delta_2 w_n(k)|\\
& \to 0 \quad  \text{ as } n \to \infty ~~ \text{with probability 1}\,.
\end{align*}

\item $\dfrac{1}{n} \ds \sum_{l=0}^{n-b_n} \ds \sum_{k=1}^{b_n} k^2 |\Delta_2 w_n(k)| \left|\left(\bar{Y}^{(i)}_l(k)   - \bar{C}^{(i)}_l(k) \right) \left(\bar{Y}^{(j)}_n - \bar{C}^{(j)}_n \right)   \right| $

Note that for any component $i$, using \eqref{one.eq},
\begin{equation}
\label{yn}
\left| \bar{Y}^{(i)}_n - \bar{C}^{(i)}_n \right|  = \dfrac{1}{n}
\left|\ds \sum_{t=1}^{n}Y_t^{(i)} - C^{(i)}(n) \right| <
\dfrac{1}{n} D \psi(n)\, .
\end{equation}

By \eqref{yk} and \eqref{yn},
\begin{align*}
& \dfrac{1}{n} \ds \sum_{l=0}^{n-b_n} \ds \sum_{k=1}^{b_n} k^2 |\Delta_2 w_n(k)| \left|\left(\bar{Y}^{(i)}_l(k)   - \bar{C}^{(i)}_l(k) \right) \left(\bar{Y}^{(j)}_n - \bar{C}^{(j)}_n \right)   \right|\\
& < \dfrac{1}{n} \ds \sum_{l=0}^{n-b_n} \ds \sum_{k=1}^{b_n} k |\Delta_2 w_n(k)| \left(2D \psi(n) \right) \left(\dfrac{1}{n} D \psi(n) \right)\\
& = 2D^2 \psi(n)^2 \left(\dfrac{n - b_n + 1}{n}  \right) \dfrac{1}{n} \ds \sum_{k=1}^{b_n} k |\Delta_2 w_n(k)|\\
& \leq 2D^2 \psi(n)^2 \left(\dfrac{n - b_n + 1}{n}  \right) \dfrac{b_n}{n} \ds \sum_{k=1}^{b_n} |\Delta_2 w_n(k)|\\
& \to 0  \quad  \text{ as } n \to \infty ~~ \text{with probability 1}\,.
\end{align*}

\item Now
\begin{align*}
& \dfrac{1}{n} \ds \sum_{l=0}^{n-b_n} \ds \sum_{k=1}^{b_n} k^2 |\Delta_2 w_n(k)| \left|\left(\bar{Y}^{(i)}_l(k)   - \bar{C}^{(i)}_l(k) \right) \left(\bar{C}^{(j)}_l(k) - \bar{C}^{(j)}_n \right) \right| \\
 \leq & \, \dfrac{1}{n} \ds \sum_{l=0}^{n-b_n} \ds \sum_{k=1}^{b_n} k^2 |\Delta_2 w_n(k)| \left|\left(\bar{Y}^{(i)}_l(k)   - \bar{C}^{(i)}_l(k) \right) \bar{C}^{(j)}_l(k) \right|\\
 & +   \dfrac{1}{n} \ds \sum_{l=0}^{n-b_n} \ds \sum_{k=1}^{b_n} k^2 |\Delta_2 w_n(k)| \left|\left(\bar{Y}^{(i)}_l(k)   - \bar{C}^{(i)}_l(k) \right) \bar{C}^{(j)}_n \right|.
\end{align*}

We will show that both parts of the sum converge to 0 with probability 1 as $n \to \infty$. Consider the first sum.  
\begin{align*}
& \dfrac{1}{n} \ds \sum_{l=0}^{n-b_n} \ds \sum_{k=1}^{b_n} k^2 |\Delta_2 w_n(k)| \left|\left(\bar{Y}^{(i)}_l(k)   - \bar{C}^{(i)}_l(k) \right) \bar{C}^{(j)}_l(k) \right|\\
& \le \dfrac{1}{n} \ds \sum_{l=0}^{n-b_n} \ds \sum_{k=1}^{b_n} k^2
  |\Delta_2 w_n(k)| \left|\bar{Y}^{(i)}_l(k) -
  \bar{C}^{(i)}_l(k) \right| \left| \bar{C}^{(j)}_l(k)
  \right|\\ & \leq \dfrac{1}{n} \ds \sum_{l=0}^{n-b_n} \ds
  \sum_{k=1}^{b_n} k |\Delta_2 w_n(k)| \left( 2D \psi(n) \right)
  \left( 2 (1+\epsilon) \sqrt{b_n \Sigma_{ii}} \dfrac{1}{k} (\log
  n)^{1/2} \right)\quad \text{by \eqref{diff.bound} and
    \eqref{yk}}\\ & = \left(\dfrac{n-b_n+1}{n} \right) 4 D (1+
  \epsilon) \sqrt{\Sigma_{ii}b_n \log n}\, \psi(n) \ds
  \sum_{k=1}^{b_n} |\Delta_2 w_n(k)|\\ & \to 0 \text{ by Condition
    \ref{cond:bn} and \eqref{assbn1}}\; .
\end{align*}

The second part is
\begin{align*}
&\dfrac{1}{n} \ds \sum_{l=0}^{n-b_n} \ds \sum_{k=1}^{b_n} k^2 |\Delta_2 w_n(k)| \left|\left(\bar{Y}^{(i)}_l(k)   - \bar{C}^{(i)}_l(k) \right) \bar{C}^{(j)}_n \right|\\
& = \dfrac{1}{n} \ds \sum_{l=0}^{n-b_n} \ds \sum_{k=1}^{b_n} k^2 |\Delta_2 w_n(k)| \left|\bar{Y}^{(i)}_l(k)   - \bar{C}^{(i)}_l(k) \right|  \left| \bar{C}^{(j)}_n \right|\\
& \leq \dfrac{1}{n} \ds \sum_{l=0}^{n-b_n} \ds \sum_{k=1}^{b_n} k |\Delta_2 w_n(k)| \left( 2D \psi(n) \right) \left( \dfrac{1}{n}(1+ \epsilon) \left[2n \Sigma_{ii} \log \log n  \right]^{1/2} \right) \quad \quad \text{by  \eqref{yk} and \eqref{n.bound}}\\
& < \left(\dfrac{n - b_n + 1}{n}  \right) 2\sqrt{2 \Sigma_{ii}}\, D (1 + \epsilon) \psi(n) \dfrac{(n\log n)^{1/2}}{n}  \ds \sum_{k=1}^{b_n}  k |\Delta_2 w_n(k)|\\
& < \left(\dfrac{n - b_n + 1}{n}  \right) 2\sqrt{2 \Sigma_{ii}}\, D (1 + \epsilon) \psi(n) \dfrac{(\log n)^{1/2}}{n^{1/2}} b_n \ds \sum_{k=1}^{b_n}   |\Delta_2 w_n(k)|\\
& < \left(\dfrac{n - b_n + 1}{n}  \right) 2\sqrt{2 \Sigma_{ii}}\, D (1 + \epsilon) \psi(n) (b_n \log n)^{1/2} \dfrac{b_n^{1/2}}{n^{1/2}} \ds \sum_{k=1}^{b_n}   |\Delta_2 w_n(k)|\\
& \to 0 \text{ by Condition \ref{cond:bn} and \eqref{assbn1}} \; .
\end{align*}

\item Next,
\begin{align*}
& \dfrac{1}{n} \ds \sum_{l=0}^{n-b_n} \ds \sum_{k=1}^{b_n} k^2 |\Delta_2 w_n(k)| \left|\left(\bar{Y}^{(i)}_n - \bar{C}^{(i)}_n \right) \left(\bar{Y}^{(j)}_n - \bar{C}^{(j)}_n \right)   \right| \\ 
& < \dfrac{1}{n} \ds \sum_{l=0}^{n-b_n} \ds \sum_{k=1}^{b_n} k^2 |\Delta_2 w_n(k)| \dfrac{1}{n^2} D^2 \psi(n)^2 \quad \quad \text{by \eqref{yn}}\\
& \leq \left(\dfrac{n-b_n+1}{n}  \right) D^2 \dfrac{b_n^2}{n^2} \psi(n)^2 \ds  \sum_{k=1}^{b_n}    |\Delta_2 w_n(k)| \\
& \to 0 \text{ by Condition \ref{cond:bn} and \eqref{assbn2}} \; .
\end{align*}

\item 
\begin{align*}
& \dfrac{1}{n} \ds \sum_{l=0}^{n-b_n} \ds \sum_{k=1}^{b_n} k^2 |\Delta_2 w_n(k)| \left|\left(\bar{Y}^{(i)}_n - \bar{C}^{(i)}_n \right) \left(\bar{Y}^{(j)}_l(k) - \bar{C}^{(j)}_l(k) \right)  \right|\\
& < \dfrac{1}{n} \ds \sum_{l=0}^{n-b_n} \ds \sum_{k=1}^{b_n} k |\Delta_2 w_n(k)| \left(\dfrac{1}{n} D \psi(n) \right) \left(2D \psi(n)  \right)\\
& < \left(\dfrac{n-b_n + 1}{n}   \right) 2 D^2 \psi(n)^2 \dfrac{1}{n} \ds \sum_{k=1}^{b_n}k |\Delta_2 w_n(k)|\\
& < \left(\dfrac{n-b_n + 1}{n}   \right) 2 D^2 \psi(n)^2 \dfrac{b_n}{n} \ds \sum_{k=1}^{b_n} |\Delta_2 w_n(k)|\\
& \to 0 \text{ by Condition \ref{cond:bn} and \eqref{assbn2}}\, .
\end{align*}

\item 
\begin{align*}
& \frac{1}{n} \ds \sum_{l=0}^{n-b_n} \ds \sum_{k=1}^{b_n} k^2 |\Delta_2 w_n(k)| \left|\left(\bar{Y}^{(i)}_n - \bar{C}^{(i)}_n \right) \left(\bar{C}^{(j)}_l(k) - \bar{C}^{(j)}_n \right) \right| \\
&  \leq \dfrac{1}{n} \ds \sum_{l=0}^{n-b_n} \ds \sum_{k=1}^{b_n} k^2 |\Delta_2 w_n(k)| \left|\left(\bar{Y}^{(i)}_n - \bar{C}^{(i)}_n \right)\bar{C}^{(j)}_l(k) \right| \\
& \quad + \dfrac{1}{n} \ds \sum_{l=0}^{n-b_n} \ds \sum_{k=1}^{b_n} k^2 |\Delta_2 w_n(k)| \left|\left(\bar{Y}^{(i)}_n - \bar{C}^{(i)}_n \right)\bar{C}^{(j)}_n \right|.
\end{align*}

We will show that each of the two terms goes to 0 with probability 1 as $n\to \infty$. 
\begin{align*}
& \dfrac{1}{n} \ds \sum_{l=0}^{n-b_n} \ds \sum_{k=1}^{b_n} k^2 |\Delta_2 w_n(k)| \left|\left(\bar{Y}^{(i)}_n - \bar{C}^{(i)}_n \right)\bar{C}^{(j)}_l(k) \right| \\
& < \dfrac{1}{n} \ds \sum_{l=0}^{n-b_n} \ds \sum_{k=1}^{b_n} k^2 |\Delta_2 w_n(k)|\left(\dfrac{1}{n} D \psi(n) \right) \left(2 (1+ \epsilon) \sqrt{b_n \Sigma_{ii}} \dfrac{1}{k} (\log n)^{1/2}  \right)\quad  \text{by \eqref{diff.bound} and \eqref{yn}}  \\
& < \left(\dfrac{n - b_n + 1}{n}\right)  2 D ( 1 + \epsilon) \sqrt{\Sigma_{ii}} \psi(n) \dfrac{\sqrt{b_n \log n}}{n}\, \sum_{k=1}^{b_n} k |\Delta_2 w_n(k)|\\
& \leq \left(\dfrac{n - b_n + 1}{n}\right)  2 D ( 1 + \epsilon) \sqrt{\Sigma_{ii}} \dfrac{b_n}{n}  \sqrt{b_n \log n}\, \psi(n) \sum_{k=1}^{b_n} |\Delta_2 w_n(k)|\\
& \to 0 \text{ by Condition \ref{cond:bn} and \eqref{assbn1}}\, .
\end{align*}

For the second term,
\begin{align*}
& \dfrac{1}{n} \ds \sum_{l=0}^{n-b_n} \ds \sum_{k=1}^{b_n} k^2 |\Delta_2 w_n(k)| \left|\left(\bar{Y}^{(i)}_n - \bar{C}^{(i)}_n \right)\bar{C}^{(j)}_n \right|\\
& = \dfrac{1}{n} \ds \sum_{l=0}^{n-b_n} \ds \sum_{k=1}^{b_n} k^2 |\Delta_2 w_n(k)| \left|\left(\bar{Y}^{(i)}_n - \bar{C}^{(i)}_n \right) \right| \left|\bar{C}^{(j)}_n \right|\\
& < \dfrac{1}{n} \ds \sum_{l=0}^{n-b_n} \ds \sum_{k=1}^{b_n} k^2 |\Delta_2 w_n(k)| \left(\dfrac{1}{n} D \psi(n)  \right) \left(\dfrac{1}{n} (1 + \epsilon)\left[2n\Sigma_{ii} \log \log n  \right]^{1/2} \right) \quad  \text{ by \eqref{yn} and \eqref{n.bound}}\\
& \leq \left( \dfrac{n-b_n+1}{n} \right)  \sqrt{2 \Sigma_{ii}} D (1+\epsilon) \dfrac{\sqrt{n \log n} \, \psi(n)}{n^2}  \ds \sum_{k=1}^{b_n} k^2 |\Delta_2 w_n(k)|\\
& \leq \left( \dfrac{n-b_n+1}{n} \right)  \sqrt{2 \Sigma_{ii}} D (1+\epsilon) \dfrac{b_n^2 \sqrt{n \log n} \, \psi(n)}{n^2}  \ds \sum_{k=1}^{b_n} |\Delta_2 w_n(k)|\\
&  \leq \left( \dfrac{n-b_n+1}{n} \right)  \sqrt{2 \Sigma_{ii}} D (1+\epsilon) \dfrac{b_n}{n} \dfrac{b_n^{1/2}}{n^{1/2}} (b_n \log n)^{1/2} \psi(n) \ds \sum_{k=1}^{b_n}  |\Delta_2 w_n(k)|\\
& \to 0 \text{ by Condition \ref{cond:bn} and \eqref{assbn1}}\, .
\end{align*}

\item 
\[
\dfrac{1}{n} \ds \sum_{l=0}^{n-b_n} \ds \sum_{k=1}^{b_n} k^2 |\Delta_2 w_n(k)|\left|\left(\bar{C}^{(i)}_l(k) - \bar{C}^{(i)}_n \right)\left(\bar{Y}^{(i)}_l(k) - \bar{C}^{(j)}_l(k)  \right)   \right|.
\]

This term is the same as term 4 except for a change of
components. Thus the same argument can be used to show that it
converges to 0 with probability 1 as $n \to \infty$.

\item 
\[
\dfrac{1}{n} \ds \sum_{l=0}^{n-b_n} \ds \sum_{k=1}^{b_n} k^2 |\Delta_2 w_n(k)| \left|\left(\bar{C}^{(i)}_l(k) - \bar{C}^{(i)}_n \right)\left(\bar{Y}^{(j)}_n - \bar{C}^{(j)}_n  \right)  \right|.
\]

This term is the same as term 7 except for a change of
components. Thus the same argument can be used to show that it
converges to 0 w.p. 1 as $n \to \infty$.
\end{enumerate}
Since each of the nine terms converges to 0 with probability 1, $| \widehat{\Sigma}_{ij} - \Sigma_{ij}| \to 0 $ as $n \to \infty$ with probability 1. 
\end{proof}
Since we proved that $\widehat{\Sigma}_S = \widehat{\Sigma}_{w,n} + d_n \to \Sigma + 0$ as $n \to \infty$ with probability 1, we have the desired result for Theorem \ref{thm:main}.

\subsection{Proof of Theorem~\ref{cor:polymc}} 
\label{sec:appendixB}

Let $S = \{S_{t}\}_{t\geq 1}$ be a strictly stationary stochastic
process on a probability space $(\Omega, {\mathcal F}, P)$ and set
${\mathcal F}_{k}^{l} = \sigma(S_{k}, \ldots, S_{l})$.  Define the
$\alpha$-mixing coefficients for $n=1,2,3,\ldots$ as
\[ 
\alpha(n) = \sup_{k \ge 1} \sup_{A \in {\mathcal F}_{1}^{k}, \, B \in
  {\mathcal F}_{k+n}^{\infty}} | P(A \cap B) - P(A) P(B) | \; . 
\]
The process $S$ is said to be \textit{strongly mixing} if $\alpha(n)
\to 0$ as $n \to \infty$.  It is easy to see that Harris ergodic
Markov chains are strongly mixing; see, for example, \citet{jone:2004}.

\begin{thm}
  \label{thm:kuelbs}
  \citep{kuel:phil:1980} Let $f(S_1), f(S_2), \ldots$ be an
  $\mathbb{R}^p$-valued stationary process such that $\E_F \|f\|^{2 +
    \delta} < \infty$ for some $0 < \delta \le 1$. Let $\alpha_{f}(n)$ be
  the mixing coefficients of the process $\{f(S_t)\}_{t \geq 1}$ and
  suppose, as $n \to \infty$,
\[ 
\alpha_f(n) = O\left(n^{-(1+\epsilon)(1 + 2/\delta)}\right) \quad \text{ for } \,  \epsilon > 0. 
\]
Then there exists a $p$-vector $\theta_f$, a $p \times p$ lower
triangular matrix $L_f$, and a finite random variable $D_f$, such that,
with probability 1,
\begin{equation}
\label{eq:multi_sip_f}
\left\| \sum_{t=1}^{n} f(X_{t}) - n\theta_f - L_f B(n) 
\right\| < D_f\, n^{1/2-\lambda_f}
\end{equation}
for some $\lambda_f > 0$ depending on $\epsilon$, $\delta$, and $p$ only.
\end{thm}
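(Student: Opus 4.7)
The plan is to follow the classical blocking scheme of Kuelbs and Philipp. First I would center $f$ by subtracting $\theta_f = \E_F f$ and assume without loss of generality that $\theta_f = 0$. The strategy decomposes the index set $\{1, \dots, n\}$ into alternating big blocks $H_j$ of length $p_m$ and small blocks $I_j$ of length $q_m$, with $q_m = o(p_m)$ and both growing as fractional powers of $m$. Write $U_j = \sum_{t \in H_j} f(S_t)$ and $V_j = \sum_{t \in I_j} f(S_t)$. The moment bound $\E_F \|f\|^{2 + \delta} < \infty$ combined with Davydov's covariance inequality for strongly mixing sequences controls $\|V_j\|$ in $L^2$, and summing the resulting variance bound shows that $\sum_j V_j = O(n^{1/2 - \mu})$ almost surely for some $\mu > 0$ once the mixing rate $\alpha_f(n) = O(n^{-(1+\epsilon)(1+2/\delta)})$ is in force.

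For the main term $\sum_j U_j$, the mixing condition makes successive big blocks nearly independent: $H_{j+1}$ and $\bigcup_{i \le j} H_i$ are separated by $I_j$ of length $q_m$, and $\alpha_f(q_m)$ is polynomially small. I would invoke a Berkes--Philipp type coupling lemma to construct, on a suitably enriched probability space, an \emph{independent} sequence $\{U_j^*\}$ with each $U_j^*$ having the same marginal law as $U_j$ and $\|U_j - U_j^*\|$ controlled by a function of $\alpha_f(q_m)$ and the $(2+\delta)$-moments of $U_j$. Summing over $j$ gives a coupling error of prescribed polynomial order.

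Since the $U_j^*$ are independent mean-zero $\real^p$-valued vectors whose covariances approach $p_m \Sigma_f$ where
\[
\Sigma_f = L_f L_f^T = \sum_{k \in \mathbb{Z}} \E_F \bigl[f(S_0) f(S_k)^T\bigr],
\]
I would then apply a multivariate strong approximation for sums of independent (possibly non-identically distributed) vectors to obtain a Brownian motion $B(\cdot)$ on the enriched space with
\[
\Bigl\| \sum_{j \le J} U_j^* - L_f B(J p_m) \Bigr\| = O(m^{1/2 - \lambda_0})
\]
for some $\lambda_0 > 0$ depending on $\delta$ and $p$; this is the multivariate extension of Koml\'os--Major--Tusn\'ady due to Berkes and Philipp. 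Combining the three error contributions (small blocks, coupling, Gaussian approximation) and optimizing $p_m$ and $q_m$ as powers of $m$ gives the overall rate $n^{1/2 - \lambda_f}$ with $\lambda_f > 0$ depending on $\epsilon$, $\delta$, and $p$.

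The hardest step will be the coupling/Gaussian approximation for the block sums: the $(2+\delta)$-moment hypothesis is tight, so one must use sharp bounds (such as Yurinskii's or Zaitsev's multivariate inequalities) rather than Skorokhod embedding, and the allowable block size $p_m$ is constrained by the polynomial mixing rate through the coupling lemma. A secondary obstacle is establishing that $\Sigma_f$ is well defined and admits a Cholesky factor $L_f$: absolute summability of the autocovariance series follows from Davydov's inequality under the stated mixing and moment conditions, but positive semidefiniteness of the limit (not just of each partial sum) requires a separate functional central limit argument to rule out cancellation. Finally, producing a single $B(\cdot)$ across all blocks rather than block-local Brownian motions is what forces the enlargement of the probability space encoded in the conclusion of the theorem.
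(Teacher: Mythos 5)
The paper offers no proof of this statement: it is imported verbatim (Theorem 4 of \citet{kuel:phil:1980}), and the appendix merely applies it via Corollary \ref{cor:poly_sip}. Your sketch is an accurate reconstruction of the original Kuelbs--Philipp argument---big/small blocking, Davydov-type covariance bounds for the small-block remainder, a Berkes--Philipp coupling rendering the big-block sums independent on an enlarged space, and a strong approximation theorem for independent $\mathbb{R}^p$-valued vectors---so you are taking essentially the same route as the cited source. One minor correction: positive semidefiniteness of $\Sigma_f$ needs no separate functional CLT, since $\Sigma_f = \lim_{n} n^{-1}\Var\bigl(\sum_{t \leq n} f(S_t)\bigr)$ is a limit of positive semidefinite matrices and that cone is closed, so a (possibly singular) lower triangular factor $L_f$ exists directly. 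Consistent with your closing caveat, the paper's Remark \ref{rem:lambda} notes that the precise dependence of $\lambda_f$ on $p$, $\epsilon$, and $\delta$ is not tracked in \citet{kuel:phil:1980} and remains open.
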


\begin{corollary}
\label{cor:poly_sip}
Let $\E_F \|f\|^{2+\delta} < \infty$ for some $\delta > 0$. If
$X$ is a polynomially ergodic Markov chain of order $\xi \geq (1+
\epsilon)(1 + 2/\delta)$ for some $\epsilon > 0$, then
\eqref{eq:multi_sip_f} holds for any initial distribution.
\end{corollary}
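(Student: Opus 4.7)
The plan is to apply Theorem~\ref{thm:kuelbs} to the process $\{f(X_t)\}$. Two things must be checked: (i) polynomial ergodicity of order $\xi$ implies that the $\alpha$-mixing coefficients of $\{f(X_t)\}$ decay as $O(n^{-\xi})$, so that the mixing hypothesis of Theorem~\ref{thm:kuelbs} is satisfied; and (ii) the SIP produced by Kuelbs--Philipp, which is a statement about a stationary process, extends to the chain started from an arbitrary initial distribution.

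For (i), I would first work with the stationary version $\{X_t^*\}$ of the chain, started from $F$. A standard bound for stationary Markov chains gives
\[
\alpha(n) \;\leq\; \int_{\X} \|P^n(x,\cdot) - F(\cdot)\|_{TV}\, F(dx).
\]
Polynomial ergodicity of order $\xi$ upper-bounds the integrand by $M(x)n^{-\xi}$ with $\E_F M < \infty$, hence $\alpha(n) \leq \E_F(M)\, n^{-\xi}$. Because $\sigma(f(X_t^*)) \subset \sigma(X_t^*)$, the mixing coefficients of the functional process satisfy $\alpha_f(n) \leq \alpha(n) = O(n^{-\xi})$, and the hypothesis $\xi \geq (1+\epsilon)(1+2/\delta)$ is exactly the decay required by Theorem~\ref{thm:kuelbs}. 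Combined with the moment condition $\E_F\|f\|^{2+\delta} < \infty$, this yields \eqref{eq:multi_sip_f} for the stationary chain $\{X_t^*\}$.

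For (ii), I would couple $\{X_t\}$ started from an arbitrary initial distribution with the stationary chain $\{X_t^*\}$ via the Nummelin splitting construction available for any Harris ergodic chain, so that the two chains coincide from some almost surely finite coalescence time $T$. Then, for $n \geq T$,
\[
\left\| \sum_{t=1}^{n} f(X_t) - \sum_{t=1}^{n} f(X_t^*) \right\| \;\leq\; \sum_{t=1}^{T} \|f(X_t) - f(X_t^*)\|,
\]
which is a finite random variable not depending on $n$. Since $\theta_f = \E_F f$ is the same under both starting distributions, this $O(1)$ discrepancy can be absorbed into $D_f$ in \eqref{eq:multi_sip_f}, so the SIP for $\{X_t\}$ holds with the same $L_f$ and $\lambda_f$.

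The main obstacle is (i): cleanly deriving the polynomial decay of the $\alpha$-mixing coefficients from the polynomial ergodicity bound on $\|P^n(x,\cdot) - F\|_{TV}$ and the integrability of $M$. Step (ii) is largely bookkeeping once a coupling is in hand, but some care is needed to realize it on a probability space rich enough to simultaneously support the Brownian motion of the SIP; this is standard from the splitting-chain formalism.
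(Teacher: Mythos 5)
Your step (i) coincides with the paper's argument: the paper likewise passes from polynomial ergodicity to $\alpha(n) \le \E_F M\, n^{-\xi}$ for the stationary chain, notes $\alpha_f(n) \le \alpha(n)$ by elementary properties of $\sigma$-algebras, and feeds the resulting $O(n^{-(1+\epsilon)(1+2/\delta)})$ decay into Theorem~\ref{thm:kuelbs}; spelling out the intermediate bound $\alpha(n) \le \int_{\X} \|P^n(x,\cdot)-F(\cdot)\|_{TV}\,F(dx)$ is a harmless elaboration of the same step. Where you genuinely diverge is step (ii). The paper does not couple: it invokes the harmonic-function argument behind Proposition 17.1.6 of \citet{meyn:twee:2009} --- the map $x \mapsto \Pr(\text{SIP holds} \mid X_1 = x)$ is a bounded harmonic function because the SIP event is asymptotic (shift-invariant), hence constant for a Harris chain, and equal to $1$ since it integrates to $1$ against $F$. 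Your exact-coupling route also works for a Harris ergodic chain: a successful coupling with a.s.\ finite coupling time $T$ exists via the split chain, the discrepancy $\sum_{t \le T}\|f(X_t)-f(X_t^*)\|$ is an a.s.\ finite random variable independent of $n$, and it can be absorbed into $D_f$ because $n^{1/2-\lambda_f}$ is bounded away from $0$ (one should remark that $\lambda_f < 1/2$ here). The trade-off is the one you already flag: the coupling argument obliges you to build a single probability space carrying both chains, the coupling, and the Brownian motion of the SIP (a transfer-theorem enlargement), whereas the harmonic-function argument sidesteps that entirely because it only manipulates the probability of the SIP event as a function of the initial point. Both are standard; the paper's version is shorter and cleaner on the measure-theoretic side, while yours is more constructive and makes the cost of a non-stationary start explicit as an $O(1)$ perturbation of the partial sums.
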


\begin{proof}
  Let $\alpha$ be the mixing coefficient for the Markov chain
  $X=\{X_{t}\}_{t\geq 1}$ and $\alpha_f$ be the mixing coefficient for
  the mapped process $\{f(X_{t})\}_{t\geq1}$.  Then the elementary
  properties of sigma-algebras \citep[cf.][p. 16]{chow:teic:1978}
  shows that $\alpha_{f}(n) \le \alpha(n)$ for all $n$.  Since $X$ is
  polynomially ergodic of order $\xi$ we also have that $\alpha(n) \leq
  \E_FM n^{-\xi}$ for all $n$ and hence if $\xi \geq (1+ \epsilon)(1 +
  2/\delta)$, then $\alpha_f(n) \leq \E_FM n^{-\xi} = O(n^{-(1+
    \epsilon)(1 + 2/\delta)})$. The result follows from Theorem
  \ref{thm:kuelbs} and thus the strong invariance principle as stated,
  holds at stationarity. A standard Markov chain argument (see,
  e.g. Proposition 17.1.6 in \cite{meyn:twee:2009}) shows that if the
  result holds for any initial distribution, then it holds for every
  initial distribution.
\end{proof}

 \begin{proof}[Proof of Theorem~\ref{cor:polymc}]
   Since $\E_F \|g\|^{4 + \delta} <\infty$ implies $\E_F
   \|g\|^{2+\delta} <\infty$ and $X$ is a polynomially ergodic Markov
   chain of order $\xi \geq (1+ \epsilon)(1 + 2/\delta)$ we have from
   Corollary \ref{cor:poly_sip} that an SIP holds such that
\begin{equation*}
\left\| \sum_{t=1}^{n} g(X_{t}) - n\theta - LB(n) 
\right\| < D \,n^{1/2 - \lambda_g} .
\end{equation*}
for some $\lambda_g > 0$ depending on $\epsilon$, $\delta$, and $p$
only.

   Since $\E_F \|g\|^{4 + \delta} <\infty$ implies $\E_F \|h\|^{2+
     \delta} <\infty$ and $X$ is a polynomially ergodic Markov chain
   of order $\xi \geq (1+ \epsilon)(1 + 2/\delta)$ we have from
   Corollary \ref{cor:poly_sip} that an SIP holds such that 
\begin{equation*}
\left\| \sum_{t=1}^{n} h(X_{t}) - n\theta_{h} - L_{h}B(n) 
\right\| < D_{h} \,n^{1/2 - \lambda_h} .
\end{equation*}
for some $\lambda_h > 0$ depending on $\epsilon$, $\delta$, and $p$
only.

   Setting $\lambda=\min\{\lambda_g, \lambda_h\}$ shows that \eqref{eq:multi_sip} and \eqref{eq:multi_sip_h} hold with
\[
\psi(n) = \psi_{h}(n) = n^{1/2 - \lambda}\; .
\]
The rest now follows easily from Theorem~\ref{thm:main}.
\end{proof}
\end{appendix}

\bibliographystyle{apalike}
\bibliography{mcref}

\end{document}